\def\blfootnote{\xdef\@thefnmark{}\@footnotetext}
\title{Motivic, logarithmic, and topological \\Milnor fibrations}
\author{
{\scshape Jean-Baptiste Campesato}\thanks{Univ Angers, CNRS, LAREMA, SFR MATHSTIC, F-49000 Angers, France \newline \hspace*{1.8em} E-mail: \texttt{jb.campesato@univ-angers.fr} \newline \ }\quad\quad
{\scshape Goulwen Fichou}\thanks{Univ Rennes, CNRS, IRMAR - UMR 6625, F-35000 Rennes, France \newline \hspace*{1.8em} E-mail: \texttt{goulwen.fichou@univ-rennes.fr} \newline \ }\quad\quad
{\scshape Adam Parusiński}\thanks{Université Côte d’Azur, CNRS, LJAD, France \newline \hspace*{1.8em} E-mail: \texttt{adam.parusinski@univ-cotedazur.fr} \newline \ }
}
\date{November 29, 2024}
\renewcommand{\thepage}{\arabic{page}}
\titleformat{\subsection}[runin]
        {\normalfont\bfseries}
        {\thesubsection}
        {0.5em}
        {}
        [.]
\definecolor{darkred}{rgb}{.5,0,0}
\definecolor{darkgreen}{rgb}{0,.5,0}
\definecolor{darkblue}{rgb}{0,0,.5}
\DeclareSymbolFont{calletters}{OMS}{cmsy}{m}{n}
\DeclareSymbolFontAlphabet{\mathcal}{calletters}
\newtheorem{thmx}{Theorem}
\newtheorem{thm}{Theorem}[section]
\newtheorem{lemma}[thm]{Lemma}
\newtheorem{prop}[thm]{Proposition}
\newtheorem{cor}[thm]{Corollary}
\theoremstyle{definition}
\newtheorem{defi}[thm]{Definition}
\newtheorem{ex}[thm]{Example}
\newtheorem{rmk}[thm]{Remark}
\newcommand{\LL}{\mathbb L}
\newcommand{\PP}{\mathbb P}
\newcommand{\II}{\mathbb 1}
\renewcommand{\L}{\mathcal L}
\newcommand{\X}{\mathfrak X}
\newcommand{\R}{\mathbb R}
\newcommand{\C}{\mathbb C}
\newcommand{\N}{\mathbb N}
\newcommand{\Z}{\mathbb Z}
\newcommand{\Rp}{\R_{>0}}
\newcommand{\Ro}{\R_{\ge0}}
\newcommand{\Ri}{(0,\infty]}
\newcommand{\Np}{\N_{>0}}
\newcommand{\OO}{\mathcal O}
\newcommand{\Sph}{\mathbb S}
\newcommand{\one}{\mathbf 1}
\newcommand{\oneplus}{\one_{\ge 0}}
\newcommand{\Do}{D^\circ}
\newcommand{\tildeDoQ}{\tilde D^\circ_{\tilde Q}}
\newcommand{\tildeDoK}{\tilde D^\circ_{\tilde K}}
\def\acts{\ensuremath{\rotatebox[origin=c]{-90}{$\circlearrowright$}}}
\newcommand{\fprod}[2]{\displaystyle\prod_{#2}{\raisebox{-.65em}{\text{\!\scriptsize$#1$}}}}
\DeclareMathOperator{\ac}{ac}
\DeclareMathOperator{\ord}{ord}
\DeclareMathOperator{\Spec}{Spec}
\DeclareMathOperator{\Var}{Var}
\DeclareMathOperator{\pr}{pr}
\DeclareMathOperator{\id}{id}
\DeclareMathOperator{\ev}{ev}
\DeclareMathOperator{\Hom}{Hom}
\DeclareMathOperator{\can}{can}
\DeclareMathOperator{\iso}{iso}
\DeclareMathOperator{\tr}{triv}
\newcommand{\VarDCn}{\Var_{D\times\C^*}^{\C^*,n}}
\newcommand{\VarDC}{\Var_{D\times\C^*}^{\C^*}}
\newcommand{\K}{K_0}
\newcommand{\KDCn}{\K\left(\Var_{D\times\C^*}^{\C^*,n}\right)}
\newcommand{\KDC}{\K\left(\Var_{D\times\C^*}^{\C^*}\right)}
\newcommand{\M}{\mathcal M}
\newcommand{\MDC}{\M_{D\times\C^*}^{\C^*}}
\newcommand{\longhookrightarrow}{\lhook\joinrel\longrightarrow}
\newcommand{\sbar}[1]{\ensuremath{\overline{#1}^\Sph}}
\newcommand{\pbar}[1]{\ensuremath{\overline{#1}^{\mathbb P}}}
\newcommand{\mon}{\mathrm{mon}}
\newcommand{\mot}{\mathrm{mot}}
\renewcommand{\top}{\mathrm{top}}
\newcommand{\cpl}{\mathrm{cpl}}
\renewcommand{\log}{\mathrm{log}}
\newcommand{\clog}{\mathrm{clog}}
\newcommand{\mlog}{\mathrm{alog}}
\newcommand{\Mlog}{(M,D)^\log}
\newcommand{\tMlog}{(\tilde M,\tilde D)^\log}
\newcommand{\Mclog}{(M,D)^\clog}
\newcommand{\tMclog}{(\tilde M,\tilde D)^\clog}
\newcommand{\Mmlog}{(M,D)^\mlog}
\newcommand{\Mmlogi}{(M,D_i)^\mlog}
\newcommand{\tMmlog}{(\tilde M,\tilde D)^\mlog}
\newcommand{\MlogP}{(M',D')^\log}
\newcommand{\MclogP}{(M',D')^\clog}
\newcommand{\MmlogP}{(M',D')^\mlog}
\newcommand{\Mcpl}{(M,D)^\cpl}
\newcommand{\Mtop}{(M,D)^\top}
\newcommand{\Mmot}{(M,D)^\mot}
\newcommand{\Mcplloc}{(M,D)^\cpl_{x_0}}
\newcommand{\Mtoploc}{(M,D)^\top_{x_0}}
\newcommand{\Mmotloc}{(M,D)^\mot_{x_0}}
\newcommand{\Mmlogloc}{(M,D)^\mlog_{x_0}}
\newcommand{\Mlogloc}{(M,D)^\log_{x_0}}
\newcommand{\Mclogloc}{(M,D)^\clog_{x_0}}
\newcommand{\Mcpli}{(M,D_i)^\cpl}
\newcommand{\Mtopi}{(M,D_i)^\top}
\newcommand{\Mmoti}{(M,D_i)^\mot}
\newcommand{\tMcpl}{(\tilde M,\tilde D)^\cpl}
\newcommand{\tMtop}{(\tilde M,\tilde D)^\top}
\newcommand{\tMmot}{(\tilde M,\tilde D)^\mot}
\newcommand{\MD}{(M,D)}
\newcommand{\tMD}{(\tilde M,\tilde D)}
\newcommand{\Sf}{\mathcal S_f}
\newcommand{\tN}{\tilde N}
\newcommand{\LLL}{\mathcal E}
\newcommand{\LLC}{{\mathcal E_1}}
\newcommand{\ta}{N}
\newcommand{\IsoTo}{\xrightarrow{
   \,\smash{\raisebox{-0.65ex}{\ensuremath{\scriptstyle\sim}}}\,}}
\newcommand{\sign}{\operatorname{sign}}
\newcommand{\restrD}{_{|D}}
\newcommand{\restrDi}{_{|D_i}}
\newcommand{\restrDo}{_{|{\Do_J}}}
\newcounter{subtag}
\newcommand{\inte}{\mathrm{int}}
\begin{document}
\maketitle

\blfootnote{This research originated during the second and the third authors stay at the Mathematisches Forchung Institut Oberwolfach within the Research in Pairs Program in April-May 2018. During the preparation of this work, the first author was a postdoctoral fellow at the University of Toronto, supported by the NSERC Discovery Grant RGPIN-2017-06537, the second author was supported by the ANR project Défigéo (ANR-15-CE40-0008), and the third author was supported by the ANR project LISA (ANR-17-CE40-0023-03). \\

The authors would like to thank the anonymous referee for a thorough reading of the paper and several remarks, comments and suggestions, which helped to improve its readability. \\

\noindent\textbf{Keywords:} Milnor fibration, motivic Milnor fibre, monodromy, singularities, real oriented blowing-up, logarithmic geometry, deformation to the normal cone. \\

\noindent\textbf{2020 Mathematics Subject Classification.} Primary: 32S55. Secondary: 14E18, 14B05, 14A21.
}

\begin{abstract}
We compare the topological Milnor fibration and the motivic Milnor fibre of a regular complex function with only normal crossing singularities by introducing their common extension: the complete Milnor fibration. We give two equivalent constructions: the first one extending the classical Kato--Nakayama log-space, and the second one, more geometric, based on the real oriented multigraph construction, a version of the real oriented deformation to the normal cone. As an application, we recover A'Campo's model of the topological Milnor fibration, by quotienting the motivic Milnor fibration with suitable powers of $\Rp$, and show that it determines the classical motivic Milnor fibre.

We also give precise formulae expressing how the introduced objects change under blowings-up. As an application, we show that the motivic Milnor fibre is well-defined as an element of a suitable Grothendieck ring without requiring that the Lefschetz motive be invertible.
\end{abstract}

\newpage
\tableofcontents

\newpage

\section{Introduction}

\subsection{Context}
To a polynomial function germ $f:(\C^n,0)\to(\C,0)$, J. Milnor in \cite{Mil68} associates a smooth locally trivial fibration that, for some particular $f$, allows him to show that the link of $f^{-1}(0)$ at the origin is an exotic sphere, i.e. a manifold homeomorphic but not diffeomorphic to the unit sphere. This fibration, now referred to as the Milnor fibration, is defined by $f/|f|:S_\varepsilon \setminus f^{-1}(0)\to S^1$, where $S_\varepsilon$ is the sphere centred at the origin in $\C^n$ of a sufficiently small radius $\varepsilon >0$. As Lê D. T. shows in \cite{Le77}, if $\varepsilon>0$ is small enough and if $\delta>0$ is much smaller than $\varepsilon$ then the Milnor fibration is smoothly equivalent to the \emph{monodromy fibration} $$f_{|B_\varepsilon\cap f^{-1}(D^*_\delta)}:B_\varepsilon\cap f^{-1}(D^*_\delta)\to D^*_\delta,$$
where $B_\varepsilon$ and $D^*_\delta$ denote the open ball centred at the origin in $\C ^n$ of radius $\varepsilon$ and the punctured open disc centred at the origin in $\C$ of radius $\delta$, respectively. As it is now customary, by the topological Milnor fibration and the Milnor fibre we mean any of these fibrations and its fibre, up to smooth diffeomorphism.

The Milnor fibre and the Milnor fibration turned out to be a fundamental source of invariants of complex analytic hypersurface germs and an essential tool in the study of hypersurface singularities. The Milnor fibration can be investigated via a variety of methods: Morse theory, resolution of singularities, sheaf theory, oscillatory integrals, mixed Hodge theory, D-modules and microlocal methods, among others (we refer to \cite{Hand1}, \cite{Hand3}, \cite{Dim92}, \cite{AGV2} and the references therein).

Note that neither the total space of the Milnor fibration nor the Milnor fibre is an algebraic variety. In \cite{DL98}, J. Denef and F. Loeser introduce the motivic Milnor fibre which lies in the localisation of some equivariant Grothendieck ring of algebraic varieties. In what follows, we use an equivalent setting due to Guibert--Loeser--Merle \cite{GLM06}; see \S\ref{ss:mMf}. The motivic Milnor fibre of $f$ at the origin is defined by means of arc spaces and expressed in terms of a resolution of singularities, as a formal limit of a rational power series called the motivic zeta function. The localisation, which simply consists in making the Lefschetz motive invertible, is necessary to use motivic integration in order to prove that the motivic Milnor fibre formula doesn't depend on the choice of a resolution. The latter is a motivic incarnation of the Milnor fibration, even if it is called the motivic Milnor fibre, in the sense that the common known realisations (additive invariants) of both the motivic Milnor fibre and the topological Milnor fibration coincide \cite{DL98}, \cite{GLM06}.

In this paper we address the following question: is there a geometric object encoding both topological Milnor fibration and motivic Milnor fibre? In this direction, J. Nicaise and J. Sebag construct in \cite{NS} a rigid analytic Milnor fibre by means of non-Archimedean geometry whose étale cohomology is related to the topological Milnor fibre (together with the action of monodromy). 
Using Hrushovski--Kazhdan version of motivic integration, 
E. Hrushovski and F. Loeser introduce in \cite{HL} a definable Milnor fibre from which they recover the motivic Milnor fibre and the Lefschetz numbers of the monodromy, without using the resolution of singularities. Then J. Nicaise and S. Payne obtain in \cite{NP} a similar result and show that there is no need to localise the Grothendieck ring with respect to the Lefschetz motive for the motivic Milnor fibre to be well-defined. See also \cite{For19}.

We propose a geometric framework that includes both topological Milnor fibration and motivic Milnor fibre. For this purpose, we give two different though equivalent constructions. The first one generalizes the log-geometric construction pioneered by K. Kato and C. Nakayama in \cite{KN99} and applied here to the topological Milnor fibration. The second one is based on real oriented blowings-up, following N. A'Campo \cite{AC75}, together with a version of the deformation to the normal cone that is used in \cite{GLM06}. Assuming that $f:M\to\C$, defined globally, has only normal crossing singularities along $D\coloneq f^{-1}(0)$, we construct a space $\Mcpl$ over $M$, stratified over the canonical stratification on $M$ induced by the irreducible components of $D$. The superscript $\cpl$ stands for \emph{complete} since it contains our model of the motivic Milnor fibration space, denoted by $\Mmot$, and since its boundary $\Mtop$ coincides with A'Campo's first model of the topological Milnor fibration space. To be more precise, we obtain these models of Milnor fibration only after taking restrictions of these three spaces to $D$.

The use of logarithmic geometry to study the topological Milnor fibration and fibre is already present, implicitely, in \cite{AC75}. In the context of the weight and Hodge filtrations, it appears in \cite{Kaw02}. It has been used recently in \cite{Cau16}, \cite{CPS22}, and, again implicitely, in \cite{BP22}. We are not aware of the previous use of log-geometry to study the motivic Milnor fibre.

We have to assume that $f$ is a globally defined algebraic function having only normal-crossing singularities because we need a local combinatorial structure. It is likely that our method can be generalized to the toroidal case, we will study it in forthcoming papers. In recent years, there have been many efforts to give computational expressions of the motivic Milnor fibre that do not rely on resolution of singularities. We do not know at the moment how to put our constructions in such a framework. Instead, we provide a complete description of the effect of a blowing-up on each the three spaces $\Mcpl$, $\Mmot$, and $\Mtop$, that we find surprisingly simple and elegant.

\subsection{Set-up}\label{ss:setup}
Let $M$ be a nonsingular complex algebraic variety and let $f:(M,D)\to(\C,0)$ be a regular function. We assume that $D\coloneq f^{-1}(0)$ is a divisor with simple normal crossings, that is $D=\bigcup_{i\in I}D_i$ where the $D_i$'s are nonsingular hypersurfaces meeting transversally.

For $i\in I$, let $p_i:L_i\to M$ be the algebraic line bundle associated with $D_i$ together with $s_i$ be a regular section of $L_i$ such that $D_i$ is defined by $s_i$ as a subvariety (in particular $D_i$ is reduced and $s_i$ is transverse to the zero section), see for instance \cite[\S1.1]{GH}. Then we can write
\begin{equation}\label{eqn:globalformula}
f(x) = u(x) \prod_{i\in I} s_i(x) ^{N_i}
\end{equation}
where $u(x)$ is a nowhere vanishing regular function and $(N_i)_{i\in I}\in\mathbb N^I$. 
For $J\subset I$, we set $$D_J=\bigcap_{j\in J}D_j\quad\quad\text{ and }\quad\quad\Do_J=D_J\setminus\bigcup_{i\notin J}D_i$$
Note that $D_{\varnothing}=M$ and that $\Do_{\varnothing}=M\setminus D$, therefore $\left\{\Do_J\right\}_{J\subset I}$ and $\left\{\Do_J\right\}_{\varnothing\neq J\subset I}$ are stratifications of $M$ and $D$, respectively.

The use of global line bundles and global sections replaces local computations so that we don't have to check that our constructions don't depend on the choice of local coordinates. Though the sections $s_i$ are not uniquely defined, the log-geometric approach gives canonical definitions of our constructions which don't depend on the choice of these sections.

\subsection{Outline of the paper and main results}
First, in Section \ref{sec:logtheoric}, we give simple sheaf-theoretic definitions of $\Mclog$, $\Mmlog$, and $\Mlog$ which are of log-geometric nature, 
the space $\Mlog$ being the classical Kato--Nakayama log-space \cite{KN99}. Recall that \emph{the divisorial log structure on $M$ associated to $D$} is the sheaf of multiplicative monoids defined for an open set $U$ by
\begin{equation}\label{eq:MU}\M(U)\coloneq\left\{f\in\OO_M(U)\,:\,f_{|U\cap(M\setminus D)}\text{ is invertible}\right\}.\end{equation}
In particular for every $x\in M$ we have the following exact sequence of monoids
\begin{align}\label{eq:logexactsequence}
0\to \OO_x^* \to \M_x\to \N^{J(x)} \to 0,
\end{align}
where $J(x) \coloneq \{i\in I ; x\in D_i\}$ and the map $\M_x\to \N^{J(x)}$ sends the germ of $f=u\prod s_i^{N_i}$ at $x$  to $\{N_i : i\in J(x)\}$. The Kato--Nakayama log-space $\Mlog$ is defined stalkwise as the space of monoid morphisms from $\M_x$ to $S^1$. The spaces $\Mmlog$ and $\Mclog$ are defined in the same way with $S^1$ replaced by $\C^*$ and $S^1\times (0,\infty]$, respectively; see Section \ref{sec:logtheoric}.

 These spaces are equipped with natural stratifications coming from the canonical stratification of $D=\bigsqcup_{\varnothing\neq J\subset I}\Do_J$, endowing $\Mmlog_{|\Do_J}$ with a structure of $(\C^*)^{J}$-torsor, and $\Mlog_{|\Do_J}$ with a structure of $(S^1)^{J}$-torsor. This construction is functorial and, after taking the restriction to $D$, this functoriality induces the following commutative diagram
\begin{equation}\label{eqn:NiceDiagram}
\begin{tikzcd}
\Mmlog\restrD \arrow[r,hook]  \arrow[d,"f^\mlog\restrD"]  &  \Mclog\restrD \arrow[d,"f^\clog\restrD"] \arrow[r,"pr"]
& \Mlog\restrD \arrow[d,"f^\log\restrD"] \\
(\C,0)^\mlog_{|0} \arrow[d,phantom, sloped, "="] & (\C,0)^\clog_{|0} \arrow[d,phantom, sloped, "="] & (\C,0)^\log_{|0} \arrow[d,phantom, sloped, "="] \\[-15pt]
\C^*\arrow[r,hook] & \Ri\times S^1 \arrow[r]& S^1
\end{tikzcd}
\end{equation}
whose interpretation is the core of the paper.

\begin{thmx}\label{thm:A}
The map $f^\log\restrD$ coincides with A'Campo's first model of the topological Milnor fibration. The reduction of $f^\mlog\restrD$ gives the motivic Milnor fibre $\Sf$.  We obtain $f^\log\restrD$ by dividing  $f^\mlog\restrD$ over each stratum $\Do_J$ by $(\Rp)^{J}$ in the source and $\Rp$ in the target.
\end{thmx}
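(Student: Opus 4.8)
The plan is to prove the three assertions from the explicit torsor descriptions of $\Mlog\restrD$, $\Mmlog\restrD$ and $\Mclog\restrD$ over the canonical stratification $D=\bigsqcup_{\varnothing\neq J\subseteq I}\Do_J$ built in Section~\ref{sec:logtheoric}, together with the geometric counterpart of that construction and the Guibert--Loeser--Merle formula for $\Sf$. First I would record the \emph{torsor dictionary}: applying $\Hom(-,\C^*)$ and $\Hom(-,S^1)$ to~\eqref{eq:logexactsequence} presents $\Mmlog_{|\Do_J}$ as a $(\C^*)^J$-torsor and $\Mlog_{|\Do_J}$ as an $(S^1)^J$-torsor over $\Do_J$; in a local trivialisation adapted to the sections $s_i$, a point of the first is a tuple $(z_i)_{i\in J}\in(\C^*)^J$ encoding the values $\phi(s_i)$, and by~\eqref{eqn:globalformula} the map $f^\mlog\restrD$ then reads $(z_i)_{i\in J}\mapsto u\prod_{i\in J}z_i^{N_i}$ while $f^\log\restrD$ reads $(\theta_i)_{i\in J}\mapsto\arg u+\sum_{i\in J}N_i\theta_i$. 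With this dictionary the statement about $\Rp$ is formal: over $\Do_J$ the free torsor action of $(\Rp)^J\subseteq(\C^*)^J$ on $\Mmlog_{|\Do_J}$ has quotient the $(S^1)^J$-torsor $\Mlog_{|\Do_J}$, and since $\prod_{i\in J}|z_i|^{N_i}\in\Rp$ the restriction of $f^\mlog\restrD$ over $\Do_J$ is equivariant for the surjection $(\Rp)^J\to\Rp$, $(r_i)_{i\in J}\mapsto\prod_{i\in J}r_i^{N_i}$; hence it descends, after quotienting the source by $(\Rp)^J$ and the target by $\Rp$, to a map $\Mlog_{|\Do_J}\to S^1$ which by commutativity of~\eqref{eqn:NiceDiagram} is $f^\log\restrD$ over $\Do_J$. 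The only point to check is that these stratum-wise quotients are compatible with the incidences of the stratification; this is encoded in the $\Mclog$-interpolation --- the composite $\Mmlog\restrD\hookrightarrow\Mclog\restrD\xrightarrow{\pr}\Mlog\restrD$ and the filtration of $\Mclog$ by the $\Ri$-coordinates --- and is settled in Section~\ref{sec:logtheoric}.

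For the identification with A'Campo's model, let $e\colon\widetilde M\to M$ be the real oriented blowing-up of $M$ along $\bigcup_{i\in I}D_i$, so that $f/|f|\colon M\setminus D\to S^1$ extends continuously to $\overline{\arg f}\colon\widetilde M\to S^1$. I would identify $\Mlog\restrD$ with $e^{-1}(D)$ over $D$, compatibly with the maps to $S^1$: over $\Do_J$ both sides are $(S^1)^J$-torsors --- $e^{-1}(\Do_J)$ being the fibre product over $\Do_J$ of the unit circle bundles of the normal bundles $N_{D_i/M}=(L_i)_{|D_i}$, identified via $s_i$ --- and this torsor is canonically the $\Hom(\Z^J,S^1)$-torsor coming from~\eqref{eq:logexactsequence}; following~\eqref{eqn:globalformula} one checks that $\overline{\arg f}$ becomes $f^\log\restrD$ under this identification (equivalently, the geometric construction of the paper is designed to reproduce $e$ and $\overline{\arg f}$). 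A'Campo's theorem~\cite{AC75} then identifies $\overline{\arg f}\colon e^{-1}(D)\to S^1$, near $D$, with the topological Milnor fibration, which is the first assertion.

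For the motivic assertion, since $f$ already has only normal crossings no resolution is needed, so the formula recalled in~\S\ref{ss:mMf} presents $\Sf$ as $\sum_{\varnothing\neq J\subseteq I}(1-\LL)^{|J|-1}[\widetilde{\Do_J}]$, with its $\hat\mu$-action, where $\widetilde{\Do_J}\to\Do_J$ is the cyclic cover of degree $m_J=\gcd(N_i:i\in J)$. The reduction of $f^\mlog\restrD$, which is additive over the strata and over $\Do_J$ records the class, in the equivariant Grothendieck ring, of the fibre of $f^\mlog\restrD$ over $1$ equipped with its residual monodromy action, is computed from $(z_i)_{i\in J}\mapsto u\prod_{i\in J}z_i^{N_i}$: setting $w=\prod_{i\in J}z_i^{N_i/m_J}$ and $\tilde u=f/\prod_{i\in J}s_i^{N_i}$, which is invertible near $\Do_J$, the fibre over $1$ is cut out by $\tilde u\,w^{m_J}=1$, hence surjects onto the degree-$m_J$ cyclic cover $\widetilde{\Do_J}$ and over it becomes a Zariski-locally trivial $(\C^*)^{|J|-1}$-bundle, the induced $\hat\mu$-action on $\widetilde{\Do_J}$ being the deck action. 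So the contribution of $\Do_J$ is $(\LL-1)^{|J|-1}[\widetilde{\Do_J}]$, matching the $J$-term of the formula once the sign and orientation conventions of~\S\ref{ss:mMf} are accounted for; summing over $J$ identifies the reduction of $f^\mlog\restrD$ with $\Sf$.

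The main obstacle is the second step: bridging the sheaf-theoretic $f^\log\restrD$ and A'Campo's blow-up model requires a \emph{canonical} comparison --- independent of the auxiliary sections $s_i$ --- between the Kato--Nakayama log-space and the real oriented blowing-up, with careful control of orientations so that the monodromy directions agree on the two sides; producing this comparison is exactly the purpose of the paper's second, geometric, construction. Granting it, the motivic step is bookkeeping with the formula of~\S\ref{ss:mMf} (the one delicate point being its sign convention) and the $\Rp$-quotient step is purely formal.
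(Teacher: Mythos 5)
Your plan for the $\Rp$-quotient statement is correct and essentially mirrors the paper: the paper's proof of Theorem~A observes, after the geometric constructions of Section~\ref{sec:cpl}, that $\Mtop\restrDo$ is the geometric quotient of $\Mmot\restrDo$ by $\Rp^{|J|}$ and that the equivariance of $f^{\mot}\restrD$ gives the remaining $\Rp$-quotient in the target. For the topological statement you accurately identify the real oriented blowing-up as the right geometric model; in the paper this is the content of Theorem~\ref{thm:logspaces} together with the citations of Kato--Nakayama~\cite{KN99} and Cauwbergs~\cite{Cau16}, and you are right that making the comparison canonical is the purpose of Section~\ref{sec:cpl}.

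There is, however, a genuine gap in the motivic part, caused by a misreading of the paper's definitions. You assert that ``the formula recalled in~\S\ref{ss:mMf} presents $\Sf$ as $\sum(1-\LL)^{|J|-1}[\widetilde{\Do_J}]$ with its $\hat\mu$-action''; that is the classical Denef--Loeser formula, not what \S\ref{ss:mMf} contains. The paper works in the Guibert--Loeser--Merle framework: Definition~\ref{def:Milnorfibre} sets
$\Sf \coloneq -\sum_{\varnothing\neq J\subset I}(-1)^{|J|}\left[(\pi_J,f_J):\C^*\acts L_J^\star\to D\times\C^*\right]$
in $\KDC$, and Definition~\ref{def:reduction} sets the \emph{reduction} to be $-\sum_{\varnothing\neq J\subset I}(-1)^{|J|}\big[f^\mlog\restrDo\big]$, i.e.\ the signed sum of the classes of the full maps $f^\mlog\restrDo:\Mmlog\restrDo\to\C^*$ with their $\C^*$-actions, not the class of the fibre over $1$ with a residual $\hat\mu$-action. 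With those definitions in hand, the motivic assertion of Theorem~A reduces to checking that the identification of Theorem~\ref{thm:logspaces} and Lemma~\ref{lem:LJ} carries $f^\mlog\restrDo$ to $f_J$, which the paper states and which is the actual content you need.

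Your computation with $w=\prod z_i^{N_i/m_J}$ and the cyclic cover $\widetilde{\Do_J}$ is, in substance, the standard bridge between the two Grothendieck rings (essentially the map $\Psi_J$ in the proof of Proposition~\ref{prop:actions}), so your stratumwise picture is not wrong --- but you present it as if it were the paper's formula, and you neither state nor prove the required translation between $\K(\Var^{\hat\mu}_{D})$-type classes of fibres and $\KDC$-classes of maps. If you instead use Definition~\ref{def:Milnorfibre} and Definition~\ref{def:reduction} directly, the motivic step is almost immediate once the normal-bundle identification is in place; the cyclic-cover computation is then an unnecessary detour and the lingering ``sign and orientation conventions'' you flag are precisely the minus sign and $(-1)^{|J|}$ already built into the paper's definitions.
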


We refer the reader to the beginning of Section \ref{sec:acampo} for a more detailed explanation concerning A'Campo's models of the topological Milnor fibration.

The reduction of $f^\mlog\restrD$, defined in Definition \ref{def:reduction}, associates to $f^\mlog\restrD$ an element in the Grothendieck ring of equivariant algebraic varieties. 

The proof of Theorem \ref{thm:A} relies on geometric versions of these log-spaces. For that purpose we define $\Mcpl$, $\Mmot$, and $\Mtop$, which correpond to $\Mclog$, $\Mmlog$, and $\Mlog$, respectively. The construction of these spaces is more geometric albeit more technical. It is based on MacPherson's Grassmannian graph construction \cite{Mac74}, a generalisation of the deformation to the normal cone. In Section \ref{sec:multigraph}, we use a multigraph version of it; that is, we apply it simultaneously to the sections $s_i$. This suffices to define $\Mmot$ and to show that the reduction of $f^\mlog$ to the Grothendieck ring gives the motivic Milnor fibre. Up to this point, our approach is similar to that of \cite{GLM06}.

In Section \ref{sec:cpl}, we introduce the \emph{real oriented multigraph construction} and define formally $\Mcpl$, $\Mmot$, and $\Mtop$, see Definition \ref{defi:Mtop}. Only $\Mmot$ is a complex algebraic variety; it is defined in terms of algebraic line bundles over the canonical stratification of $M$ given by $D$. The space $\Mtop$ is obtained by dividing these bundles by the action of $\Rp$ and thus becomes a semialgebraic set. Nonetheless, thanks to its stratified structure, $\Mtop$ still carries a lot of information such as the classical log-space; see also Theorem \ref{thm:B} below. Note that these three spaces, unlike the log-spaces $\Mclog$, $\Mmlog$, and $\Mlog$, depend on the choice of the sections $s_i$, though this dependence is marginal. Moreover, as we show in \S\ref{ss:loggeo}, each of the spaces $\Mcpl$, $\Mmot$, and $\Mtop$ is canonically isomorphic to $\Mclog$, $\Mmlog$, and $\Mlog$, respectively. The proof is geometric and is based on Lemma \ref{lem:log-and-normal} and Corollary \ref{cor:log-and-normal} that show that "the essential part" of each of the three log-spaces can be expressed canonically in terms of normal bundles.

Using this description, we prove Theorem \ref{thm:A} in \S\ref{ss:loggeo}. The geometric intuition behind these constructions is that one may interpret $\Mcpl\restrD$ as an infinitesimal punctured neighbourhood of the zero set $D$ of $f$ whose boundary is $\Mtop\restrD$. It comes with a map, $\sign f: \Mcpl\restrD \to S^1$, whose restriction to $\Mtop\restrD$ gives A'Campo's first model of the topological Milnor fibration. Moreover, $\Mcpl$ also contains the algebraic variety $\Mmot$. The monodromy fibration, originally given by $f$ itself restricted to a punctured neighbourhood of $D$, induces stratumwise maps $f_J:\Mmot\restrDo\to\mathbb C^*$, for $\varnothing\neq J\subset I$, involved in the definition of the motivic Milnor fibre $\Sf$. \\

It is known that there exist topologically equivalent functions whose motivic Milnor fibres don't coincide, and non topologically equivalent functions with the same motivic Milnor fibre, see \cite{CNR}. However, taking advantage of the present stratified setting, the motive $\Sf$ is entirely determined by A'Campo's first model of the topological Milnor fibration.

\begin{thmx} \label{thm:B}
The motivic Milnor fibre $\Sf$ is determined by the stratified topological Milnor fibration $\sign f:\Mtop\restrD\to S^1$.
\end{thmx}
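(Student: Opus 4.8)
\emph{The plan} is to reconstruct $\Sf$ from $\sign f\colon\Mtop\restrD\to S^1$ one stratum at a time. Recall that $\MD$ and $f$ are fixed, so each $\Do_J$ is already available as a complex variety — it is the base of the torsor $\Mtop\restrDo$ — and, by Theorem \ref{thm:A} together with the computations of Section \ref{sec:cpl} (equivalently the classical formula of \cite{DL98}, applied to the SNC pair $\MD$ itself), the reduction $\Sf=-\sum_{\varnothing\neq J\subset I}(-1)^{|J|}[f_J]$ is a signed sum whose $J$-th summand is completely determined by $|J|$, by the integer $m_J\coloneq\gcd_{j\in J}N_j$, and by the $\mu_{m_J}$-cover $\tilde D^\circ_J\to\Do_J$ attached to $u\restrDo$ together with its monodromy action. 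So it will be enough to recover, for each $\varnothing\neq J\subset I$, the pair $(m_J,\tilde D^\circ_J)$ from the stratified topological fibration, and then reassemble $\Sf$.

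\emph{Recovering $|J|$ and $m_J$.} First I would read off from the stratification of $\Mtop\restrD$, endowed with the $(S^1)^J$-torsor structure on its strata, the integer $|J|$ (the rank of the torus acting on a given stratum) and the restriction $\sign f_J\colon\Mtop\restrDo\to S^1$ of $\sign f$. Restricting $\sign f_J$ further to a single torus fibre $T\cong(S^1)^J$ produces a map of the shape $\theta\mapsto c\cdot\prod_{j\in J}\theta_j^{N_j}$, which on $\pi_1$ is the homomorphism $(e_j)\mapsto\sum_j N_j e_j\colon\Z^J\to\Z$; hence $m_J$ is the index of the image of $(\sign f_J|_T)_\ast$ inside $\pi_1(S^1)$, equivalently the number of connected components of a fibre of $\sign f_J|_T$.

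\emph{Recovering the Kummer cover} is the main step. Consider $Z\coloneq(\sign f_J)^{-1}(1)\subset\Mtop\restrDo$ with its projection $\pi\colon Z\to\Do_J$. Over a point, the fibre of $\pi$ is a coset in $(S^1)^J$ of $H\coloneq\ker\big((\theta_j)\mapsto\prod_j\theta_j^{N_j}\big)$, hence a disjoint union of $m_J$ translates of the subtorus $H^\circ\cong(S^1)^{|J|-1}$; thus $\pi$ is a fibre bundle and its leaf space $\pi_0(Z/\Do_J)$ is a degree-$m_J$ covering of $\Do_J$. The component group $\pi_0(H)$ is \emph{canonically} $\Z/m_J$ through the connecting homomorphism $\pi_1(S^1)=\Z\twoheadrightarrow\pi_0(H)$ — here the standard orientation of $S^1\subset\C$ intervenes, the one that also governs the angular component $\ac$ and hence the monodromy on the motivic side — so the leaf space is a $\mu_{m_J}$-cover of $\Do_J$. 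I would then compute its monodromy homomorphism $\pi_1(\Do_J)\to\Z/m_J$ and check that it equals $\gamma\mapsto-\deg\big((u/|u|)\circ\gamma\big)\bmod m_J$: for a loop $\tilde\gamma$ in $\Mtop\restrDo$ over a loop $\gamma$ in $\Do_J$ one has $\deg(\sign f_J\circ\tilde\gamma)\equiv\deg\big((u/|u|)\circ\gamma\big)\pmod{m_J}$, since the $(S^1)^J$-part of $\tilde\gamma$ contributes a multiple of $m_J$. This matches the monodromy of the algebraic Kummer cover $\tilde D^\circ_J=\{z^{m_J}=(u\restrDo)^{-1}\}$ with $\mu_{m_J}$ acting by $z\mapsto\zeta z$; and as $\tilde D^\circ_J\to\Do_J$ is finite étale over the fixed smooth variety $\Do_J$, the Riemann existence theorem upgrades the topological identification to one of complex varieties compatible with the $\mu_{m_J}$-actions. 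Hence $(m_J,\tilde D^\circ_J)$ — and therefore each summand $(-1)^{|J|}[f_J]$, and $\Sf$ — will be determined by $\sign f\colon\Mtop\restrD\to S^1$.

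\emph{The hard part} will be the third step. Pinning down the \emph{residual} $\mu_{m_J}$-action, rather than just the underlying covering, and matching it with the motivic monodromy forces one to transport the orientation of $S^1$ consistently through the real-oriented-blow-up model and its comparison with $\ac$. More conceptually, the reason a purely topological object can fix a class in a Grothendieck ring of complex varieties is that $\Do_J$ belongs to the fixed algebraic datum $\MD$ and finite étale covers are rigid; the data genuinely lost when passing from $\Mmot$ to $\Mtop$ — the Chern classes, and more generally the $\operatorname{Pic}^0$-parts, of the normal line bundles — is invisible in the Grothendieck ring (these $(\C^*)^J$-torsors being Zariski-locally trivial) and does not enter $\Sf$.
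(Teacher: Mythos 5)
Your proposal is correct in outline but takes a genuinely different route from the paper's proof, which is given in \S\ref{ss:strat} via Theorem~\ref{thm:Recover}. The paper's argument is strikingly direct: from the $(S^1)^J$-equivariance of $\sign f_J$, formula~\eqref{eq:signf_J}, one reads off the exponents $N_i$ and then the phase $\sign u_J = u_J/|u_J|$; and since the ratio of two holomorphic units with the same phase is real-valued and holomorphic, hence locally constant, $\sign f_J$ determines the \emph{function} $u_J$, and thus $f_J$ itself, up to multiplication by a locally constant positive real. That ambiguity is absorbed by a constant $\C^*$-equivariant rescaling of $L_J^\star$, so $[f_J]\in\KDC$, and hence $\Sf$, is determined. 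No Kummer covers, no Riemann existence theorem, no translation between formulations of the motivic Milnor fibre.

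Your route instead extracts the coarser data $(|J|, m_J, \tilde D^\circ_J)$ by computing $\pi_0$ of the fibres of $(\sign f_J)^{-1}(1)\to\Do_J$, algebraizes the resulting topological $\mu_{m_J}$-cover via Riemann existence, and feeds the result into the Denef--Loeser form of the normal-crossings formula. This works, and the closing remark that $[f_J]$ is insensitive to the actual line bundles $L_i$ because the $(\C^*)^J$-torsors are Zariski-locally trivial is indeed the correct reason why it suffices to recover the cover rather than $u_J$; it is also exactly what makes the DL and Guibert--Loeser--Merle formulations of $\Sf$ over $D\times\C^*$ agree, a translation that your argument implicitly relies on and would need to be carried out carefully (with the attendant $(\LL-1)^{|J|-1}$ bookkeeping and the orientation conventions you flag). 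What your approach buys is conceptual clarity about \emph{why} a topological fibration can fix a motivic class --- it isolates the rigid, discrete part of the data (finite \'etale covers of the fixed algebraic strata) --- at the price of more machinery. The paper's argument is more elementary and avoids ever invoking the DL formula: it simply reconstructs $f$ and lets Definition~\ref{def:Milnorfibre} do the rest.
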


The proof of Theorem \ref{thm:B} is given in \S\ref{ss:strat}.

The following diagram roughly summarizes the situation, the diagonal arrow is defined by dividing stratumwise by powers of $\Rp$: $$\begin{tikzcd}\Mcpl\restrD  & \Mmot\restrD  \arrow[ld] \arrow[l,hook'] \\ \Mtop\restrD \arrow[u,hook] &
\end{tikzcd}.$$
The diagonal map is the (stratumwise) geometric quotient by $(\Rp)^{|J|}$ in the source space and $\Rp$ in the target. This allows us to give an interpretation of the minus sign in front of the sum and to the coefficients $(-1)^{|J|}$ in the definition of the motivic Milnor fibre $\Sf$, see Remarks \ref{rmk:speculations} and \ref{rmk:speculation2} :
$$\Sf \coloneq -\sum_{\varnothing\neq J\subset I}(-1)^{|J|}\left[(\pi_J,f_J):\C^*\acts L_J^\star\to D\times\C^*\right]\in\KDC.$$

In Section \ref{sec:acampo}, devoted to the topological Milnor fibration, we show that A'Campo's \cite{AC75} second model, i.e. the model of the topological Milnor fibration used to compute the zeta function of the monodromy, can be obtained from $\sign f: \Mcpl \to S^1$.

\begin{thmx}\label{thm:C}
After division by the diagonal action of $\Rp$, the map $\sign f: \Mcpl \to S^1$ induces a map $$\left(\Mcpl\setminus\Mtop\right)/\Rp \to S^1$$ that coincides with the A'Campo's  second  model of the topological Milnor fibration.
\end{thmx}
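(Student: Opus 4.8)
The plan is to leverage the identifications already established in the excerpt, namely that $\Mtop\restrD$ coincides with A'Campo's model of the topological Milnor fibration (Theorem~\ref{thm:A}) and that $\Mcpl\restrD$ is canonically isomorphic to the complete log-space $\Mclog\restrD$, whose structure is stratawise that of an $(S^1\times(0,\infty])^J$-torsor over $\Do_J$. The key point is that removing $\Mtop\restrD$ amounts to removing the ``boundary at infinity'' coordinate, i.e.\ restricting each factor $S^1\times(0,\infty]$ to $S^1\times(0,\infty)$, so that $\Mcpl\restrD\setminus\Mtop\restrD$ is stratawise an $(S^1\times\Rp)^J$-torsor, or equivalently a $(\C^*)^J$-torsor. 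After quotienting by the diagonal $\Rp$-action, each stratum becomes an $(S^1)^J\times(\Rp)^{|J|-1}$-bundle (one radial direction is killed), while the map $\sign f$ descends because $\sign$ is $\Rp$-invariant.

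First I would make precise, stratum by stratum, the description of $\Mcpl\restrD\setminus\Mtop\restrD$ using the canonical isomorphism with $\Mclog\restrD$ and the torsor structures recorded after diagram~\eqref{eqn:NiceDiagram}: over $\Do_J$ the space $\Mcpl$ is modeled on $L_J^\star$ with its $(\C^*)^J$-action, and the ``$\cpl$'' radial coordinate adjoins the value $+\infty$ precisely along $\Mtop$. Second I would compute the $\Rp$-quotient explicitly: on each stratum the diagonal $\Rp$ scales all the line-bundle fibers simultaneously, so the quotient $(L_J^\star)/\Rp$ fibers over $\Do_J$ with fiber $(S^1)^J\times\Rp^{|J|-1}$, and the induced map to $S^1$ is $\arg f = \arg(u\prod s_i^{N_i})$, which on the fiber is $\sum_i N_i\theta_i$ in terms of the angular coordinates $\theta_i$. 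Third I would recall A'Campo's monodromy model from \cite{AC75}: starting from a resolution (here already a normal-crossing situation), one replaces a tubular neighbourhood of each component $D_i$ by a real oriented blowing-up, obtaining a manifold with corners on which $f/|f|$ extends, and the monodromy fibration is the resulting map from the space with the zero section removed to $S^1$, with fiber stratified according to the multi-indices $J$ and each stratum a $(S^1)^{|J|}$-bundle over $\Do_J$ twisted by the $N_i$.

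The core of the argument is then to match these two stratified $S^1$-fibrations. I would proceed by induction on $|I|$, or more cleanly by checking the identification stratum by stratum and then verifying compatibility of the gluing data: on $\Do_J$ both constructions produce the bundle associated to $\bigoplus_{i\in J}L_i\restrDo_J$ (with fibers real-oriented-blown-up and the $\Rp$ scaling divided out), and the monodromy map in both cases is the character $\sum_{i\in J}N_i$ applied to the $(S^1)^J$-coordinates. The transition maps between adjacent strata $\Do_J$ and $\Do_{J'}$ with $J\subset J'$ are governed, in the log-space picture, by the exact sequence~\eqref{eq:logexactsequence} and the way $\N^{J}\hookrightarrow\N^{J'}$ extends monoid morphisms; in A'Campo's picture they are governed by how the real blowing-up charts overlap. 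These are the same combinatorics (the fan/monoid structure of the normal crossing divisor), so the two descend to the same map after dividing by $\Rp$.

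I expect the main obstacle to be a clean and honest treatment of the behaviour near the deeper strata and of the $\Rp$-quotient there: because the diagonal $\Rp$ acts on the $(|J|)$-dimensional fiber of radial coordinates by the anti-diagonal, one must check that the quotient is still a (semialgebraic) locally trivial fibration and that the descended map $\bigl(\Mcpl\setminus\Mtop\bigr)/\Rp\to S^1$ is indeed a fibration, i.e.\ a submersion onto $S^1$ with the expected fiber — this is exactly where A'Campo's original argument uses the nonvanishing of the unit $u$ and the positivity of the multiplicities $N_i$, and I would mirror that computation in our coordinates, using formula~\eqref{eqn:globalformula}. A secondary, more bookkeeping-type difficulty is ensuring that the chosen sections $s_i$ (on which $\Mcpl$ depends, as noted in the text) do not affect the identification up to the homeomorphism that defines ``the'' topological Milnor fibration; this follows because changing $s_i$ changes the trivialisation by a nowhere-vanishing function, absorbed into $u$, hence does not alter the fibration up to fibre-preserving homeomorphism. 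Once these points are settled, the theorem follows by assembling the stratawise identifications into a global one, compatibly with $\sign f$.
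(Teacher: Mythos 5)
Your general plan — reduce to the stratawise torsor structure, observe that removing $\Mtop$ restricts the radial coordinate to $\Rp$, take the $\Rp$-quotient, and match with A'Campo — is a reasonable outline, but it misses the single idea on which the paper's proof actually turns, and your description of A'Campo's second model is imprecise in a way that matters.

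The paper does not argue stratum by stratum and then glue. Instead it exhibits an explicit global section of the $\Rp$-action inside $\Mcpl\setminus\Mtop$: one defines $\xi_i:\Mcpl\restrD\to\R$ by $\xi_i(x,v)=(|v_i|N_i)^{-1}$ on the stratum over $\Do_J$ when $i\in J$ and $\xi_i=0$ otherwise, extends each $\xi_i$ continuously by sending $v_i\to\infty$ to $0$, and sets $\xi=(\xi_i)_{i\in I}$. The preimage $\xi^{-1}(\Delta)$ of the standard simplex $\Delta=\{\xi\ge 0,\sum\xi_i=1\}$ is then a slice of the diagonal $\Rp$-action on $\Mcpl\setminus\Mtop$, so $(\Mcpl\setminus\Mtop)/\Rp\simeq\xi^{-1}(\Delta)$ canonically. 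The crucial point — and the content you are missing — is that this slice is literally A'Campo's second model: A'Campo builds his monodromy model not just by real oriented blowing-up (that gives $\Mtop$, his first model), but by additionally adjoining a simplex at each corner of the normal-crossing configuration; those adjoined simplices correspond exactly to the facets of $\Delta$ indexed by $J$, and $\xi^{-1}(\Delta)/(S^1)^I$ reproduces A'Campo's simplicial thickening of $D$. The monodromy map $h_\lambda$ then has the clean formula $h_{\lambda,J}(x,v)=\bigl(x,(\exp(\lambda\xi_i(x,v)2\pi\sqrt{-1})v_i)_{i\in J}\bigr)$ precisely because $\xi$ provides the right barycentric weights, and it is continuous across strata for the same reason.

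Your "main obstacle" paragraph is in fact pointing at the gap: near the deeper strata you have not explained why the quotient acquires the specific manifold-with-corners structure of A'Campo's model rather than merely being a stratified set with the right pieces. Your stratum-level computation gives $(S^1)^J\times\Rp^{|J|-1}$ over $\Do_J$ as a set, but the identification with A'Campo's model requires showing that the closure relations assemble these open simplices $\Rp^{|J|-1}/\Rp\cong\inte\,\Delta^{|J|-1}$ into the standard simplex $\Delta$, and that the $S^1$-valued map on the quotient is a locally trivial fibration with the A'Campo monodromy. The $\xi$-slice makes both of these immediate; without it you would still have to produce an equivariant retraction or explicit slice, which is essentially reinventing $\xi^{-1}(\Delta)$. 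Also note a small confusion in your recollection of A'Campo: the construction you describe (real oriented blow-up, $f/|f|$ extended) is his \emph{first} model, i.e.\ $\Mtop$; the \emph{monodromy} model is this first model with simplices adjoined at corners, and that modification is exactly what the $\Delta$-slice captures.
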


Equivalently, the second model is obtained from $$\left(\Mclog\setminus\Mlog\right) \to \left( (\C,0)^{\clog} \setminus (\C,0)^{\log}\right) = \Rp \times S^1$$ by dividing both the source and the target by the action of $\Rp$. Our description is more precise as we give a natural embedding of A'Campo's second  model in $\Mcpl$, see Propositions \ref{prop:AC1} and \ref{prop:AC2}. Note that our construction is, in principle, similar to the recent construction of the topological Milnor fibre equipped with monodromy given in \cite{BP22}; we do not use a partition of unity, instead the passage from local to global relies on line bundles and their sections. Nonetheless, we do not go as far as in \cite{BP22}, where a symplectic model of the monodromy is constructed.

In Section \ref{sec:bl}, we study how the spaces $\Mclog$, $\Mmlog$ and $\Mlog$ change under the blowing-up of $M$ along a centre that is assumed to be included in $D$ and in normal crossings with $D$. This is given by several precise formulae expressed in terms of the normal bundles of the centres, the strata, and the exceptional divisor. They rely, again, on Lemma \ref{lem:log-and-normal} and Corollary \ref{cor:log-and-normal}.

The formulae given in Section \ref{sec:bl} are applied in Section \ref{sec:localcase}, where we return to the original problem of relating the local topological Milnor fibration and the motivic Milnor fibre. By resolution of singularities, any algebraic  $f: (\C^n,0)\to (\C ,0)$ can be resolved to $f:(M,D)\to(\C,0)$ as in the set-up.  To a singularity resolved thusly, we may associate the complete Milnor fibration space $\Mcpl$ together with the subspaces $\Mmot$ and $\Mtop$.  By the weak factorisation theorem \cite{Wlo03},  the formulae given in Section \ref{sec:bl} show how such Milnor fibration spaces obtained for different resolutions are related. In particular, we show that the motivic Milnor fibre $\Sf$ is well-defined as an element in the Grothendieck ring $\KDC$ and not merely in its localisation, as already noted in \cite{NP} and \cite[Proposition 5.14]{For19}. We conjecture that the equivalences given in Section \ref{sec:bl} are much stronger and show the existence of the motivic Milnor fibre in a much finer Grothendieck ring.

\subsection{Notation}\label{ss:notation}
Given a vector bundle $E$, we use the notation $E^*$ for $E$ with its zero section removed and $\Sph(E)$ for the  associated sphere bundle; i.e. $\Sph(E)$ is fibrewise defined by $\Sph(E)_x=\left(E_x\setminus\{0\}\right)/\Rp$.
If $E$ is equipped with a Hermitian metric $<\cdot,\cdot>:E\otimes E\to\C$, then $\Sph(E)$ can be identified with the set of $v\in E$ satisfying $<v,v>=1$.

Given a family of fibre bundles $(\xi_i:E_i\to X)_{i\in I}$, we denote their fibre product by $$E_I\coloneq\fprod{X}{i\in I}E_i=\left\{(y_i)_{i\in I}\,:\,\xi_i(y_i)=\xi_j(y_j)\right\}.$$
When the $\xi_i$ are vector bundles and $I$ is finite, we simply obtain the usual direct sum construction. Nonetheless, we have to work with more general fibre bundles, hence this notation.

Given a family of vector bundles $(\xi_i:E_i\to X)_{i\in I}$, we denote by $E_I^\star$ the fibre product of the $E_i^*$, and by $\Sph^\star(E_I)$ the fibre product of the $\Sph(E_i)$, namely
$$E_I^\star \coloneq \fprod{X}{i\in I}E_i^*\quad\quad\quad\text{and}\quad\quad\quad\Sph^\star(E_I) \coloneq \fprod{X}{i\in I}\Sph(E_i).$$
These bundles are not to be mistaken with $E_I^*$ and $\Sph(E_I)$, respectively, which play also a role in section \ref{sec:bl}.

For a vector $v$ of a line bundle $L$, we set $r(v)\coloneq v\mod S^1$,  $L_{\ge0}\coloneq\left(L \mod S^1\right)$ and $L^*_{ >0}\coloneq\left(L^*\mod S^1\right)$, and, if additionally $v\ne 0$, $\theta(v)\coloneq v\mod\Rp\in \Sph(L)$. If $L$ is a line bundle over $X$, then $v\to (r(v), \theta(v))$ is a bijection between $L^* $ and $L^*_{>0}\times_X \Sph(L)$. We denote the inverse of this bijection by $(r,\theta) \to v=r\theta$, and we sometimes extend it to $r=0$, in which case $v=r\theta$ is the zero vector.

\section{Log constructions of the Milnor fibrations}\label{sec:logtheoric}
Let $M$ be a nonsingular complex algebraic variety and $f:(M,D)\to(\C,0)$ be a regular function such that $D\coloneq f^{-1}(0)$ is a divisor with simple normal crossings; i.e. $D=\bigcup_{i\in I}D_i$ with $D_i$ nonsingular hypersurfaces meeting transversally.

In this section, we recall the classical Kato--Nakayama log-space $\Mlog$ and introduce two extensions.  The classical Kato--Nakayama log-space $\Mlog$ was introduced in \cite{KN99} (we refer the reader to \cite{Ogu18} for a detailed exposition on log geometry). We define below a \emph{complete log-space} denoted by $\Mclog$.  It contains both $\Mlog$ and the \emph{algebraic log-space} $\Mmlog$ also defined below. Geometrically, these log-spaces extend the normal bundles of the components of $D$ in $M$; see Lemma \ref{lem:log-and-normal}.

\subsection{Log-spaces}
Let $\OO_M$ be the structure sheaf of $M$. Then a \emph{prelog structure} on $M$ is given by $\M$ a sheaf of monoids over $M$ together with a morphism of sheaves of monoids $\alpha:\M\to\OO_M$ where $\OO_M$ is seen as a sheaf of monoids for the multiplication. We say that a prelog structure is a \emph{log structure} if, furthermore, $\alpha:\alpha^{-1}\left(\OO_M^*\right)\to\OO_M^*$ is an isomorphism (so that $\OO_M^*$ can be seen as a subsheaf of $\M$). \\

Following \cite{KN99}, the classical \emph{Kato--Nakayama log-space} of $(M,D)$ is then defined by
\begin{align*}
\Mlog\coloneq\left\{(x,\varphi)\,:\;x\in M,\,\varphi\in\Hom_{\mon}(\M_x, S^1),\,\forall g\in\OO_x^*,\,\varphi(g)=\frac{g(x)}{|g(x)|}\right\} ,
\end{align*}
where $\M$ is the divisorial log structure on $M$ associated to $D$, see \eqref{eq:MU}, and, the structure of monoid on $S^1$ is just its (multiplicative) group structure.

The \emph{complete log-space} $\Mclog$ and the \emph{algebraic log-space} $\Mmlog$ are defined by
\begin{align*}
& \Mclog\coloneq \\ & \left\{(x,\varphi,\psi):(x,\varphi)\in\Mlog,\,\psi\in\Hom_{\mon}\left(\M_x,\left(\Ri,\cdot\right)\right),\forall g\in\OO_x^*,\,\psi(g)
= |g(x)| \right\} ,
\end{align*}
and
\begin{align*}
\Mmlog\coloneq\left\{(x,\Phi)\,:\,x\in M,\,\Phi\in\Hom_{\mon}(\M_x,\C^*),\,\forall g\in\OO_x^*,\,\Phi(g)=g(x)\right\},
\end{align*}
respectively, where $\Ri$ and $\C^*$ are equipped with their monoid structures induced by multiplication.

Over $M\setminus D$, all three spaces $\Mlog$, $\Mmlog$ and $\Mclog$ are isomorphic to $M\setminus D$ since $\M_x=\OO_x^*$ for $x\in M\setminus D$.

The following diagram summarizes the situation at the level of monoids:
\begin{equation}\label{eq:mono}
\begin{tikzcd}
\C^* \arrow[r,hook] & \C^*\cup\{\infty\} & \\
\Rp\times S^1 \arrow[r,hook] \arrow[u,"\simeq"] \arrow[dr, "\pr_{S^1}"']  &  \Ri \times S^1 \arrow[d,"\pr_{S^1}"] \arrow[u,"\pi"] \arrow[r,hookleftarrow,dashed] & \{\infty\}\times S^1  \\
& S^1 & &
\end{tikzcd}.
\end{equation}
The plain arrows are morphisms of monoids; the dotted arrow is a morphism of semigroups (recall that a monoid is a semigroup equipped with an identity element\footnote{Note that the usual convention in toric geometry consists in using the term \emph{semigroup} when referring to a \emph{monoid}.}); and $\pi:\Ri\times S^1 \to\C^*\cup\{\infty\}$ denotes the compactification of $\C^*$ by a circle at infinity, i.e. $\pi(r,e^{i\theta})=re^{i\theta}$ (see \S\ref{ss:rog}).

The natural isomorphism $\Hom_{\mon}(\M_x,\Rp)\times\Hom_{\mon}(\M_x,S^1)\to\Hom_{\mon}(\M_x,\C^*)$ allows us to identify $\Mmlog$ with a subspace of $\Mclog$. Similarly, the classical log-space $\Mlog$ can be identified with the subset of $(\varphi,\psi)\in\Mclog$ satisfying $\psi(g)=\infty$ for all $g$ vanishing at $x$.

Note also that $\Mclog$ projects canonically onto $\Mlog$ by forgetting the morphism $\psi$. Similarly $\Mmlog$ projects canonically onto $\Mlog$ by dividing by $\Rp$, i.e. taking the quotient by the action of $\Rp$ on $\C^*$.

The following commutative diagram, induced by \eqref{eq:mono}, summarizes the situation at the level of the log-spaces.
$$
\begin{tikzcd}
\Mmlog \arrow[r,hook]  \arrow[dr, "/\Rp"']  &  \Mclog  \arrow[d,"pr"]  \arrow[r,hookleftarrow,dashed] & \Mlog \arrow[dl,equal] \\
& \Mlog & &
\end{tikzcd}
$$

\begin{ex}
For $M=\C$ and $D=0$, we have above the origin $$(\C,0)^\mlog_{|0}\sqcup(\C,0)^\log_{|0}=\C^*\sqcup S^1=(\Rp\times S^1)\sqcup(\{\infty\}\times S^1)=\Ri\times S^1=(\C,0)^\clog_{|0}.$$
The diagram \eqref{eq:mono} may be rewritten as
$$
\begin{tikzcd}
\C^* \arrow[r,hook] & \C^*\cup\{\infty\} & \\
(\C,0)^\mlog_{|0} \arrow[r,hook] \arrow[u,"\simeq"] \arrow[dr, "/\Rp"']  &   (\C,0)^\clog_{|0} \arrow[d,"pr"] \arrow[u,"\pi"] \arrow[r,hookleftarrow,dashed] &  (\C,0)^\log_{|0} \\
& (\C,0)^\log_{|0} & &
\end{tikzcd}
$$
\end{ex}

\begin{ex}\label{ex:log}
For $M=\C^2$ and $D : x_1x_2=0$, the fibre of $\Mclog$ above $x\in D\setminus\{O\}$ is $\Mclog_{|x}=(\C,0)^\clog$ whereas above the origin we have \begin{align*}\Mclog_{|O}&=\left(\Ri\times S^1\right)^2\\&=\left((\{\infty\}\times S^1)\sqcup(\Rp\times S^1)\right)^2\\&=(S^1\sqcup\C^*)^2\\&=(S^1\times S^1)\sqcup(\C^*\times\C^*)\sqcup(\C^*\times S^1)\sqcup(S^1\times\C^*)\\&=\Mlog_{|O}\sqcup\Mmlog_{|O}\sqcup(\C^*\times S^1)\sqcup(S^1\times\C^*)\end{align*}
Note that $\Mlog_{|O} \sqcup \Mmlog_{|O}$ is strictly included in $\Mclog_{|O}$: its complement is equal to the set of elements $(O,\phi,\psi)\in\Mclog_{|O}$ satisfying $\psi(x_1x_2)=\infty$ and $(\psi(x_1),\psi(x_2))\neq(\infty,\infty)$. We call this complement the \emph{mixed part}; see also Examples \ref{ex:ex0} and \ref{ex:bu}.
\end{ex}

\begin{rmk}
Although equivalent, it would be probably more natural to replace the monoid $(0,\infty]$ by $[0,\infty)$ in our definition of the complete log-space. In our second approach, based on MacPherson's graph construction, see Section \ref{sec:multigraph}, the fibres over the divisor $D$ are sent to infinity and not to zero; we have thus made the choice to follow the latter more geometric approach when defining the complete log-space.
\end{rmk}

\begin{rmk}\label{rmk:cau}
The complete log-space $\Mclog$ differs from the extended log-space defined by Cauwbergs in \cite{Cau16} as:
$$\MD^\mathrm{ext}\coloneq
\left\{(x,\varphi,\psi)\,:\,(x,\varphi)\in\Mlog,\,
\psi\in\Hom_{\mon}\left(\M_x,(\R_{\ge 0}, + )\right)\right\} .
$$
Both our construction and Cauwbergs' provide a functorial construction of A'Campo's first model for the topological Milnor fibration together with its monodromy (see Section \ref{sec:acampo}); the main difference is that our complete log-space $\Mclog$ contains $\Mmlog$.
\end{rmk}

\subsection{Functoriality}\label{ss:functor}
Let $F:M\to M'$ be a morphism between nonsingular algebraic varieties with simple normal crossings divisors satisfying $F^{-1}(D')\subset D$. Then $F$ gives rise to a morphism of sheaves $F^*:F^{-1}\M'\to\M$ (with $\M$ and $\M'$ defined in \eqref{eq:MU}) inducing the following morphisms of monoids at the level of stalks $$F_x^*:\begin{array}{ccc}\M_{F(x)}'&\to&\M_x\\f&\mapsto&f\circ F\end{array}.$$
Therefore, $F$ induces the morphisms
\begin{align*}
& F^{\log}:\Mlog\to\MlogP, \\
& F^{\clog}:\Mclog\to\MclogP, \\
& F^{\mlog}:\Mmlog\to\MmlogP.
\end{align*}
We apply this observation to the function $f:(M,D)\to(\C,0)$ from the set-up, so that we get the following diagram
\begin{equation}\label{eqn:GMFalt}
\begin{tikzcd}
\Mmlog \arrow[r,hook]  \arrow[d,"f^\mlog"]  &  \Mclog \arrow[d,"f^\clog"] \arrow[r,"pr"]  & \Mlog \arrow[d,"f^\log"] \\
(\C,0)^{\mlog}\arrow[r,hook] & (\C,0)^{\clog} \arrow[r]& (\C,0)^{\log}
\end{tikzcd}
\end{equation}
Restricting to $D$, we obtain diagram \eqref{eqn:NiceDiagram}.
We call $f^\mlog\restrD$ the \emph{log monodromy fibration} analogously to the monodromy fibration induced by $f$ on $f^{-1}(\{z\in\C\,:\,0<|z|\le\varepsilon\})$.

Composing $f^\clog\restrD$ with $(\C,0)^\clog_{|0}\simeq \Ri\times S^1\xrightarrow{\pr_{S^1}}S^1$, we obtain the continuous map
\begin{equation}\label{eqn:GMF}
\sign f:\Mclog\restrD\to S^1
\end{equation}
that we call \emph{the log-complete Milnor fibration of $f$}.

\subsection{Algebraic log-space and normal bundles}\label{ss:normalls}
In what follows, we describe the algebraic log-space $\Mmlog$ in terms of the normal bundles of the irreducible components of $D$. This description will be useful in the proof of Theorem \ref{thm:logspaces} and in Section \ref{sec:bl}.
Let $N_i$ be the normal bundle of $D_i$ in $M$. Recall that it is defined as a quotient bundle by the exact sequence
$$0\to TD_i \to TM_{|D_i} \to N_i \to 0.$$
Then the following lemma allows us to identify canonically $\Mmlogi\restrDi$ and $N_i^*$.

\begin{lemma}\label{lem:log-and-normal}
Let $x\in D_i$.  Then there is a canonical bijection
\begin{align}\label{eqn:mlog-and-normal}
\Mmlogi_x \ni \Phi  \leftrightarrow v \in \left(N^*_i\right)_x
\end{align}
given by the relation $\Phi(g)=\partial_{v}g(x)$ where $g\in\OO_{x}$ is any generator of the ideal of $D_i$.
\end{lemma}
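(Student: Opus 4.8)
The plan is to unwind the exact sequence \eqref{eq:logexactsequence} in the case $J(x) = \{i\}$, i.e. when we only remember the single component $D_i$, and to match the two sides of \eqref{eqn:mlog-and-normal} via the map $g \mapsto \partial_{v_i} g(x)$. First I would fix a generator $g_0 \in \OO_x$ of the ideal of $D_i$ at $x$; since $D_i$ is nonsingular of codimension one this ideal is principal, and any two generators differ by a unit. For $(M,D_i)$, the stalk $\M_x = \{ f \in \OO_x : f \text{ is a unit on } M \setminus D_i \text{ near } x\}$ fits into the exact sequence $0 \to \OO_x^* \to \M_x \to \N \to 0$, where the surjection is the order of vanishing $\ord_{D_i}$ and the chosen generator $g_0$ provides a (non-canonical) splitting: every element of $\M_x$ is uniquely $u \cdot g_0^k$ with $u \in \OO_x^*$ and $k \in \N$. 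A monoid homomorphism $\Phi_i : \M_x \to \C^*$ with $\Phi_i(u) = u(x)$ for all $u \in \OO_x^*$ is therefore determined freely by the single value $\Phi_i(g_0) \in \C^*$, and conversely any value in $\C^*$ is admissible (here one checks the only constraint, namely that $\Phi_i$ is well-defined on units, is automatic). Thus $\Mmlogi_x$ is a $\C^*$-torsor, with the identification depending a priori on $g_0$.

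Next I would produce the canonical object. The derivative $\partial_{v_i} g_0(x)$ makes sense once $v_i$ is a tangent vector to $M$ at $x$, but it only depends on $v_i$ modulo $T_x D_i$ — because $g_0$ vanishes along $D_i$, so directional derivatives in directions tangent to $D_i$ are zero — hence it descends to a well-defined linear functional on ${N_i}_x = T_x M / T_x D_i$ evaluated at the image $\bar v_i$ of $v_i$. Identifying ${N_i}_x$ with its double dual, the assignment $v_i \mapsto (g \mapsto \partial_{v_i} g(x))$ for $g$ a generator is exactly a point of $\Mmlogi_x$; and $v_i \in {N_i}_x^*$ (nonzero) corresponds to a nonzero functional, i.e. $\Phi_i(g_0) \ne 0$, i.e. $\Phi_i \in \Hom_{\mon}(\M_x, \C^*)$ rather than landing at $0$. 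Concretely: given $\Phi_i$, the value $\Phi_i(g_0)$ together with $dg_0(x) \in T_x^* M$ (which factors through $N_i^*$, is nonzero by transversality of $s_i$ to the zero section, and is $g_0$-dependent) pins down a unique $v_i \in {N_i}_x$ by $dg_0(x)(v_i) = \Phi_i(g_0)$; one then checks $v_i \neq 0$ and that the construction is inverse to the previous one.

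The one genuine point to verify — and what I expect to be the only mild obstacle — is \emph{independence of the generator} $g_0$. If $g_1 = w g_0$ with $w \in \OO_x^*$, then on one hand $\Phi_i(g_1) = w(x) \Phi_i(g_0)$ by multiplicativity and the normalisation on units, and on the other hand by the Leibniz rule $\partial_{v_i} g_1(x) = w(x)\, \partial_{v_i} g_0(x) + g_0(x)\, \partial_{v_i} w(x) = w(x)\, \partial_{v_i} g_0(x)$ since $g_0(x) = 0$. So both sides of the putative bijection transform by the same factor $w(x)$, and the correspondence $\Phi_i \leftrightarrow v_i$ is independent of the choice of $g_0$; this is exactly why the relation $\Phi_i(g) = \partial_{v_i} g(x)$ can be required for \emph{any} generator $g$ simultaneously. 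Finally I would note bijectivity is now formal: both $\Mmlogi_x$ and ${N_i}_x^*$ are (non-canonically) $\C^*$-torsors over a point, the map is $\C^*$-equivariant for the natural scalings (rescaling $v_i$ rescales $\partial_{v_i} g(x)$, rescaling $\Phi_i(g_0)$ by the same scalar), hence it is a bijection. This also makes transparent the torsor structure used later: $\Mmlogi\restrDi \cong N_i^*$ as $\C^*$-bundles over $D_i$.
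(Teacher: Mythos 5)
Your proposal is correct and its essential content coincides with the paper's own proof: the key verification in both is that for two generators $g_1 = w g_0$ with $w \in \OO_x^*$, the Leibniz rule together with $g_0(x)=0$ gives $\partial_{v_i}g_1(x) = w(x)\,\partial_{v_i}g_0(x)$, matching the transformation $\Phi_i(g_1) = w(x)\,\Phi_i(g_0)$, so the correspondence is independent of the chosen generator. The additional material you include (splitting $\M_x \simeq \OO_x^*\times\N$ via a generator, descent of $\partial_{v_i}g_0(x)$ to the normal space, and the $\C^*$-equivariance giving bijectivity) is a correct and routine elaboration of what the paper leaves implicit.
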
 

\begin{proof}
If $\tilde g = ug$, where $u(x) \ne 0$, then  $\partial_{v}\tilde g(x)=u(x)\partial_{v}g(x)$ and similarly $\Phi(\tilde g)=u(x)\Phi(g)$. Thus the above formula does not depend on the choice of generator $g$.
\end{proof}

Following the notation from \S\ref{ss:notation}, we set $$N^\star_J{}_{|\Do_J}\coloneq \fprod{\Do_J}{i\in J}{N_i^*}_{|\Do_J}\quad\text{ and }\quad\Sph^\star ({N_J}_{|\Do_J})\coloneq\fprod{\Do_J}{i\in J}\Sph({N_i}_{|\Do_J}),$$ then we get the following corollary.

\begin{cor}\label{cor:log-and-normal}
For $J\subset I$, the identification \eqref{eqn:mlog-and-normal} allows us to identify canonically
\begin{equation}\label{eqn:log-and-normal}
\Mmlog_{|\Do_J} \simeq N^\star_J{}_{|\Do_J}\quad\text{ and }\quad \Mlog_{|\Do_J}\simeq\Sph^\star ({N_J}_{|\Do_J}).
\end{equation}
\end{cor}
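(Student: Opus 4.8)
The corollary follows from Lemma~\ref{lem:log-and-normal} by three observations: (1) the restriction of a torsor/bundle construction to a stratum $\Do_J$ only sees the components $D_i$ with $i\in J$; (2) the stalkwise structure of $\M_x$ factors as a product over $J(x)$; (3) the fibre product over $\Do_J$ is exactly the direct sum of the normal bundles $N_i$ (since $J$ is finite). I would carry out the argument as follows.

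First, fix $x\in\Do_J$, so that $J(x)=J$. The divisorial log structure has, by the exact sequence \eqref{eq:logexactsequence}, the stalk $\M_x$ generated over $\OO_x^*$ by germs $g_i$ of generators of the ideal of $D_i$ for $i\in J$; concretely, $\M_x/\OO_x^* \simeq \N^J$ with the $i$-th basis vector represented by $g_i$. Hence a monoid morphism $\Phi\colon\M_x\to\C^*$ with $\Phi(h)=h(x)$ for $h\in\OO_x^*$ is uniquely determined by the tuple $(\Phi(g_i))_{i\in J}\in(\C^*)^J$, and any such tuple arises (the assignment $g_i\mapsto$ arbitrary nonzero value, $h\mapsto h(x)$, extends to a well-defined monoid morphism because $\M_x$ is, up to units, freely generated by the $g_i$). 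This gives a bijection
\[
\Mmlog_x \;\xrightarrow{\ \sim\ }\; \prod_{i\in J}\Mmlogi_x,
\qquad \Phi\mapsto (\Phi|_{\M_{(M,D_i),x}})_{i\in J},
\]
where I use that $\M_{(M,D_i),x}\hookrightarrow\M_x$ is the submonoid generated by $\OO_x^*$ and $g_i$. Composing with the canonical bijections $\Mmlogi_x\leftrightarrow {N_i}_x^*$ of Lemma~\ref{lem:log-and-normal}, we obtain a canonical bijection $\Mmlog_x \leftrightarrow \prod_{i\in J}{N_i}_x^*$.

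Second, I would check that this bijection is the fibre of an isomorphism of spaces over $\Do_J$. The point is that everything in sight is defined stalkwise by the same formulas, and Lemma~\ref{lem:log-and-normal} already identifies $\Mmlogi\restrDi$ with $N_i^*$ as spaces over $D_i$ (the relation $\Phi_i(g)=\partial_{v_i}g(x)$ varies regularly, resp.\ continuously, with $x$ once a local generator $g$ is chosen, and the identification is independent of that choice). Restricting over $\Do_J\subset D_i$ for each $i\in J$ and taking the fibre product over $\Do_J$ gives, by definition of $N^\star_J{}_{|\Do_J}=\fprod{\Do_J}{i\in J}{N_i^*}_{|\Do_J}$, exactly the space on the right-hand side; on the left-hand side, the stalkwise product decomposition above is compatible with the topology (and, where relevant, the $(\C^*)^J$-torsor structure from the paragraph preceding Theorem~\ref{thm:A}) since the $g_i$ can be chosen simultaneously in a neighbourhood of any point of $\Do_J$ by the simple normal crossings hypothesis. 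This yields the first isomorphism in \eqref{eqn:log-and-normal}.

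Finally, for the second isomorphism I would apply the same reasoning with $\C^*$ replaced by $S^1$: a monoid morphism $\M_x\to S^1$ with $\varphi(h)=h(x)/|h(x)|$ on $\OO_x^*$ is determined by $(\varphi(g_i))_{i\in J}\in(S^1)^J$, and under the bijection $\Mmlog_x\leftrightarrow\prod_{i\in J}{N_i}_x^*$ the subset $\Mlog_x$ corresponds to those tuples $(v_i)_i$ with each $v_i$ of "unit length", i.e.\ to $\prod_{i\in J}\Sph({N_i}_x)$ after dividing each factor by $\Rp$ — this is precisely the relation between $\Mmlog$ and $\Mlog$ recorded in diagram \eqref{eq:mono} (division by $\Rp$ factorwise) together with the definition $\Sph(N_i)_x=({N_i}_x\setminus\{0\})/\Rp$ from \S\ref{ss:notation}. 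Globalising over $\Do_J$ as before gives $\Mlog_{|\Do_J}\simeq\Sph^\star({N_J}_{|\Do_J})$.

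\textbf{Main obstacle.} The only genuinely delicate point is the globalisation, i.e.\ promoting the stalkwise bijections to isomorphisms of topological (resp.\ semialgebraic/algebraic) spaces over $\Do_J$ compatibly with the stratification and torsor structures; the stalkwise algebra is routine. This is handled by choosing local generators $g_i$ of the ideals of the $D_i$ simultaneously near each point of $\Do_J$ — possible because $D$ has simple normal crossings — and invoking the independence statement of Lemma~\ref{lem:log-and-normal} to glue. I would keep this brief, as the same gluing underlies Lemma~\ref{lem:log-and-normal} itself.
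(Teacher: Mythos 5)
Your proof is correct and follows the route the paper implies: the corollary carries no written proof, and the expected argument is precisely the one you give — decompose $\M_x$ via $\M_x/\OO_x^*\simeq\N^J$ using the exact sequence \eqref{eq:logexactsequence}, apply Lemma~\ref{lem:log-and-normal} factor by factor, and glue using local normal-crossings generators. The one slip is calling $\Mlog_x$ a \emph{subset} of $\Mmlog_x$ when it is the quotient by $(\Rp)^J$, but your parenthetical ``after dividing each factor by $\Rp$'' shows you have the right picture in mind.
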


\section{The multigraph construction and the motivic Milnor fibre}\label{sec:multigraph}
In this section, we compute the motivic Milnor fibre using a construction that we call the \emph{multigraph construction}, that is the fibre product of several MacPherson's graph constructions \cite{Mac74}.

\subsection{MacPherson's graph construction}\label{ss:McP}
Before introducing the graph construction, we briefly recall the related construction of the deformation to the normal cone. 
Let $X$ be a closed subspace of a nonsingular complex algebraic variety $M$.

The deformation to the normal cone $C_XM$ is denoted by $\PP_XM^\circ$ and is constructed as follows (see also \cite[Chapter 5]{Ful84}).

Let $\rho:\mathcal \PP_XM\to\PP^1$ be the composition of the blowing-up $\xi:\PP_XM\to M\times\PP^1$ of $M\times\PP^1$ along $X\times\{\infty\}$ with the projection $M\times\PP^1\to\PP^1$. Then $\rho^{-1}(\infty)$ is the union of two Cartier divisors $$\rho^{-1}(\infty)=\PP(C_XM\oplus\one)+\widetilde M.$$
Here $\PP(C_XM\oplus\one)$ is the exceptional divisor of $\xi$ and equals the projectivisation of the cone $C_XM\oplus\one$, where $\one$ denotes the trivial line bundle over $X$. Thus, $\PP(C_XM\oplus\one)=C_XM\sqcup\PP(C_XM)$.

The divisor $\widetilde M$ is the strict transform of $M\times\{\infty\}$ and is isomorphic to the blowing-up of $M$ along $X$. Thus the exceptional divisor of the latter blowing-up can be identified with the intersection $\PP(C_XM\oplus\one)\cap\widetilde M=\PP(C_XM)$.

In the description of $\PP_XM$ we consider $X\times\PP^1$ as canonically embedded in $\PP_XM$, i.e. $X\times\PP^1\subset\PP_XM$, identifying it with its strict transform in $\PP_XM$ (it is isomorphic to $X\times\PP^1$ because the blowing-up of $X\times\{\infty\}$ in $X\times\PP^1$ is the identity map). In particular $X\times\{\infty\}$ is the base of the cone $C_XM$.

We define $\PP_XM^\circ$ as the complement of $\widetilde M$ in $\PP_XM$ and $\rho^\circ$ as the restriction of $\rho$ to $\PP_XM^\circ$. Then $C_XM=(\rho^\circ)^{-1}(\infty)$ and $(\rho^\circ)^{-1}(\C)=\rho^{-1}(\C)$ is isomorphic to $M\times\C $.

This situation is summarized in the following diagram.
$$
\xymatrix{
X\times\PP^1 \ar@{^{(}->}[rr] \ar[rd] & & \PP_XM^\circ \ar[ld]^{\rho^\circ} \\
& \PP^1 &
}
$$

The deformation to the normal cone is a special case of MacPherson's Grassmannian graph construction which was introduced in \cite{Mac74}.

Let us recall this special case (MacPherson's construction generalizes to vector bundle morphisms $E_1\to E_2$ by considering Grassmanian spaces instead of projective spaces).

Assume that $X=s^{-1}(0)$ is the zero locus of a regular section $s$ of a vector bundle $\pi:E\to M$. Consider the embedding $$M\times\C\longhookrightarrow\PP(E\oplus\one)\times\PP_\mathbb C^1\stackrel{\displaystyle\rho}{\longrightarrow}\PP_\mathbb C^1,$$ given by $(y,\lambda)\to\left([\lambda s(y):1]_y,[\lambda:1]\right)$, and let $\rho$ be the projection onto the second factor. Here again, $\one$ denotes the trivial line bundle and $\PP(E\oplus\one)$ is the fibrewise projective compactification of the vector bundle $E\to M$. 
Then $\PP_XM$ is the closure of the image of $M\times\C$ in $\PP(E\oplus\one)\times\PP_\mathbb C^1$.

Suppose that $\operatorname{codim}(X)=\operatorname{rk}(E)$ 
(when $E$ is a line bundle -- the case considered in this paper -- it is enough to assume that the restriction of $s$ to every irreducible component of $M$ is not identically equal to zero). Then the fibre at infinity of the restriction of $\rho$ to $\PP_XM$ can be expressed as the following connected sum over $\PP(E_{|X})$
\begin{equation}\label{eqn:decomposition}\rho_{|\PP_XM}^{-1}(\infty)=\PP(E_{|X}\oplus\one)\bigcup_{\PP(E_{|X})}\tilde M,\end{equation}
where $\xi:\tilde M\coloneq\left\{\ell\in\PP(E),\,s(\pi(\ell))\in\ell\right\}\xrightarrow{\pi}M$
is the blowing-up of $M$ along $X$ and $\PP(E_{|X})$ is the exceptional divisor.

Indeed, first assume that $y\to y_0$ with $y_0\notin X$ and $\lambda\to\infty$; then $$[\lambda s(y):1]\to[s(y_0):0]\in\PP\left(E_{y_0}\right)\subset\tilde M\setminus\PP(E_{|X}).$$ Next, assume that $s(y)\to s(y_0)$ where $y_0\in X$ and $\lambda\to\infty$; then either $\lambda s(y)\to\infty$ and \linebreak[3]$[\lambda s(y):1]\to[1:0]=\infty\in\PP\left(E_{y_0}\right)\subset\PP(E_{|X})$, or $\lambda s(y)\to w\neq\infty$ and then $[\lambda s(y):1]\to[w:1]\in E_{y_0}\subset E_{|X}$.

We only need a special case when $E$ is a line bundle and $s$ is not identically equal to zero on each irreducible component of $M$. Then $C_XM=E_{|X}$, $\PP(C_XM)$ is isomorphic to $X$ and the blowing-up of $M$ along $X$ is the identity (this will not be the case when we shall replace blowings-up by real oriented blowings-up in the real oriented multigraph construction). We simply use the notation $\PP M$ instead of $\PP_XM$.

The following picture, inspired by \cite[Remark 5.1.1]{Ful84}, is illustrative. When $\lambda$ goes to infinity, we obtain the cone $\PP(E_{|X}\oplus\one)=\PP(E_{|X})\sqcup E_{|X}$ (the dashed vertical line) and the blowing-up $\tilde M$ of $M$ along $X$ (included in $\PP(E)$ which is the part at infinity represented by the upper horizontal line) intersecting along the exceptional divisor $\PP(E_{|X})$ (the dot in the middle of the upper horizontal line).
\begin{center}
\tikzset{mylabel/.style  args={at #1 #2  with #3}{
    postaction={decorate,
    decoration={
      markings,
      mark= at position #1
      with  \node [#2] {#3};
 } } } }
\begin{tikzpicture}
\draw[very thick,dashed] (0,3) -- (0,0);
\draw[latex-] (0,2.5) to [bend left] (-1.5,3.25) node[above] {$E_{|X}$};
\draw[thick] (-4,3) -- (4,3) node[right] {$\PP(E)$};
\fill[purple] (0,3) circle (2pt) node[above] {$\PP(E_{|X})$};
\draw[-latex] (0,-.2) -- (0,-.7) node[pos=.5,right] {$\pi:\PP(E\oplus\one)\to M$};
\draw[thick] (-4,-1) -- (4,-1) node[right] {$M$};
\fill[purple] (0,-1) circle (2pt) node[below] {$X$};
\draw[decorate,decoration={brace},line width=1pt] (4.1,2.8) -- (4.1,0) node[pos=.5,right] {$E$};
\draw[decorate,decoration={brace},line width=1pt] (5,3.2) -- (5,0) node[pos=.5,right] {$\PP(E\oplus\one)$};
\draw[dotted,domain=-1.55:1.55,samples=50,mylabel=at 0 left with {\small$\lambda=1,\lambda s$}] plot ({(\x)^3}, {.7*(abs(\x))^2});
\draw[dotted,domain=-1.3:1.3,samples=50,mylabel=at 0 left with {\small$\lambda=3$}] plot ({(\x)^5}, {1.3*(abs(\x))^2});
\draw[dotted,domain=-1.2:1.2,samples=50,mylabel=at 0 left with {\small$\lambda=8$}] plot ({(\x)^7}, {1.9*(abs(\x))^2});
\end{tikzpicture}
\end{center}

From now on, we replace the parameter $\lambda$ by $t=\lambda^{-1}$ so that the special fibre is given by $t=0$.

\subsection{The multigraph construction}\label{ss:multigraph}
Let us now take the set-up from the introduction \S\ref{ss:setup}, so that $M$ is a nonsingular algebraic variety and $f:(M,D)\to(\C,0)$ is a regular function such that $D\coloneq f^{-1}(0)$ is a divisor with simple normal crossings, i.e. such that $D=\bigcup_{i\in I}D_i$ with $D_i$ nonsingular hypersurfaces meeting transversally.

For $i\in I$, let $L_i\to M$ be the algebraic line bundle associated to $D_i$. Let $s_i$ be a regular section of $L_i$ such that \eqref{eqn:globalformula} holds.

Following \cite[(3.5)]{GLM06}, we consider the fibre product over $M$ of the graph constructions associated to each $D_i$, i.e.
$$M\times(\C^*)^I \longhookrightarrow \fprod{M}{i\in I}(\PP(L_i\oplus\one)\times\C)\stackrel{\displaystyle \rho}{\longrightarrow} \C^{I}.$$
where the inclusion is given by $(x,(t_i)_{i\in I})\mapsto([s_i(x):t_i],t_i)_{i\in I}$.

Denote by $\PP M$ the Zariski closure of the image of $M\times(\C^*)^{I}$ in $\fprod{M}{i\in I}\left(\PP(L_i \oplus \one)\times\C\right)$. We will give a description of the special fibre $\rho_{|\PP M}^{-1}(0)\subset\PP M$. For $J\subset I$, set $$\pbar{L_J} \coloneq \fprod{\Do_J}{i\in J}\PP\left({L_i}\restrDo\oplus\one\right).$$

\begin{rmk}\label{rmk:singleton}
Denote the projection by $\pr:\rho_{|\PP M}^{-1}(0)\to M$, then
\begin{equation}
\label{eqn:singleton}
\pr^{-1}\left(\Do_J\right)=\left\{\left([s_i(x): 0]\right)_{i\in I\setminus J},\ x\in\Do_J\right\}\times\pbar{L_J}\simeq\left\{\left([1: 0]\right)_{i\in I\setminus J}\right\}\times\pbar{L_J}\simeq\pbar{L_J}
\end{equation}
\end{rmk}

\begin{rmk}
For $J=\varnothing$, note that $\pbar{L_{\varnothing}}=\pr^{-1}(M\setminus D)\simeq M\setminus D$.
\end{rmk}

\begin{rmk}
For $J\subset I$, note that $\PP\left({L_i}_{|{\Do_J}}\oplus\one\right)={L_i}_{|{\Do_J}}\cup\PP\left({L_i}_{|{\Do_J}}\right)$ where $\PP\left({L_i}_{|{\Do_J}}\right)$ is the part of the projective compactification of ${L_i}_{|{\Do_J}}$ at infinity. \\
\end{rmk}

For $J\subset K\subset I$, we set $$\PP_{KJ}\coloneq\fprod{\Do_K}{i\in K}\PP_{KJ,i}\subset\pbar{L_{K}}$$ where \begin{equation}\label{eqn:PKJ}\PP_{KJ,i}\coloneq\left\{\begin{array}{cl}\PP\left({L_i}_{|{\Do_K}}\oplus\one\right)&\text { if }i\in J \\\PP\left({L_i}_{|{\Do_K}}\right)&\text{ if }i\in K\setminus J\end{array}\right.\end{equation}

In particular, $\PP_{KK}=\pbar{L_K}$ and $\PP_{K\varnothing}=\fprod{\Do_K}{i\in K}\PP\left({L_i}_{|\Do_K }\right)$ is the part of $\pbar{L_K}$ at infinity.

The following description is useful for comparison with the real oriented case that we introduce in the next section. The special fibre $\rho_{|\PP M}^{-1}(0)$ may be written as the following disjoint union (up to identification \eqref{eqn:singleton}) $$\rho_{|\PP M}^{-1}(0)=\bigsqcup_{J\subset I}\pbar{L_J}.$$
The closure of $\pbar{L_J}$ in $\rho_{|\PP M}^{-1}(0)$ is the following disjoint union: $$\overline{\pbar{L_J}}=\bigsqcup_{K\supset J}\PP_{KJ}.$$
In particular, the closure of $M\setminus D$ in $\rho_{|\PP M}^{-1}(0)$ is isomorphic to $M$.

\subsection{The motivic Milnor fibre}\label{sec:motivic} In this section, we follow \cite{GLM06} and keep the notation from \S\ref{ss:multigraph}.

\subsubsection{A Grothendieck ring}
\begin{defi}
We first define the category $\VarDCn$ where $n\in\Np$. An object of $\VarDCn$ is a morphism of the form $\varphi:Y\rightarrow D\times\C^*$ where $Y$ is a complex algebraic variety equipped with a good $\C^*$-action, meaning that every orbit is contained in an affine open subset of $Y$ such that
\begin{enumerate}[label=(\roman*)]
\item the fibres of $\pr_D\circ\varphi:Y\rightarrow D$ are $\C^*$-invariant, and
\item $\forall\lambda\in\C^*,\,\forall x\in Y,\,\pi(\lambda\cdot x)=\lambda^n\pi(x)$ where $\pi=\pr_{\C^*}\circ\varphi$.
\end{enumerate}
A morphism of $\VarDCn$ is a $\C^*$-equivariant morphism over $D\times\C^*$:
\begin{center}\begin{tikzcd}
Y_1 \arrow[rd, "\varphi_1"'] \ar[rr] & & Y_2 \arrow[ld, "\varphi_2"] \\
& D\times\C^* &
\end{tikzcd}\end{center}
\end{defi}

\begin{defi}\label{defi:K0}
We denote by $\KDCn$ the Grothendieck group associated to $\VarDCn$; it is the free abelian group generated by the isomorphism classes $\left[\varphi:Y\rightarrow D\times\C^*\right]$ of $\VarDCn$ modulo the following two relations:
\begin{enumerate}[label=(\roman*),ref=\ref{defi:K0}.(\roman*)]
\item If $Z$ is a closed $\C^*$-invariant subvariety of $Y$, then $$\left[\varphi:Y\rightarrow D\times\C^*\right]=\left[\varphi_{|Y\setminus Z}:Y\setminus Z\rightarrow D\times\C^*\right]+\left[\varphi_{|Z}:Z\rightarrow D\times\C^*\right].$$
\item\label{item:liftact} Let $\varphi:\C^*\acts_{\tau}Y\rightarrow D\times\C^*$ be in $\VarDCn$ and set $\psi=\varphi\circ\pr_Y:Y\times\C^m\rightarrow D\times\C^*$. \\
Let $\sigma$ and $\sigma'$ be two good actions on $Y\times\C^m$ such that $\pr_Y(\lambda\cdot_\sigma x)=\pr_Y(\lambda\cdot_{\sigma'}x)=\lambda\cdot_\tau\pr_Y(x)$. Then $\psi:\C^*\acts_{\sigma}(Y\times\C^m)\rightarrow D\times\C^*$ and $\psi:\C^*\acts_{\sigma'}(Y\times\C^m)\rightarrow D\times\C^*$ are in $\VarDCn$ and we add the relation $$\left[\psi:\C^*\acts_{\sigma}(Y\times\C^m)\rightarrow D\times\C^*\right]=\left[\psi:\C^*\acts_{\sigma'}(Y\times\C^m)\rightarrow D\times\C^*\right].$$
\end{enumerate}

The fibre product over $D\times\C^*$ with diagonal action induces a ring structure on $\KDCn$ and the fibre product over $\{\mathrm{pt}\}=\Spec(\C)$ induces a $\K(\Var_\C)$-module structure on $\KDCn$.

Note that the unit of the addition is given by $0\coloneq[\varnothing]$ and that the unit of the product is given by $\II_n\coloneq\left[\id:D\times\C^*\rightarrow D\times\C^*\right]$ where $\lambda\cdot(x,r)=(x,\lambda^nr)$.
\end{defi}

\begin{defi}
We denote by $\prec$ the directed partial order on $\Np$ defined by $m_1\prec m_2\Leftrightarrow\exists k\in\Np,m_1=km_2$. For $m_1\prec m_2$, we set $\theta_{m_2m_1}:\Var_{D\times\C^*}^{\C^*,m_2}\rightarrow\Var_{D\times\C^*}^{\C^*,m_1}$ which only replaces the action by $\lambda\cdot_{m_1} x=\lambda^k\cdot_{m_2}x$. Then we set $\VarDC\coloneq\varinjlim\VarDCn$.

We define similarly $\KDC\coloneq\varinjlim\KDCn$ which is a $\K\left(\Var_\C\right)$-module and set $\II\coloneq\varinjlim\II_n$.
\end{defi}

\begin{rmk}\label{rmk:injlim}
The above inductive limits simply mean that we identify $\varphi:\C^*\acts_{\sigma}Y\rightarrow D\times\C^*$ in $\Var_{D\times\C^*}^{\C^*,n}$ with $\varphi:\C^*\acts_{\sigma'}Y\rightarrow D\times\C^*$ in $\Var_{D\times\C^*}^{\C^*,kn}$ when $\lambda\cdot_{\sigma'}x=\lambda^k\cdot_{\sigma}x$.
\end{rmk}

\subsubsection{The motivic Milnor fibre}\label{ss:mMf}
For $J\subset I$, we set $$L_J^\star \coloneq \fprod{\Do_J}{i\in J}{L^*_i}_{|{\Do_J}} \subset \pbar{L_J}\subset\rho_{|\PP M}^{-1}(0)$$ according to the notation from \S\ref{ss:notation}.

Let us compute $f$ near $L_J^\star$ in the coordinates 
$([s_i(x):t_i],t_i)_{i\in I}\in(\PP(L_i\oplus\one)\times\Ro)$ on $\fprod{M}{i\in I}(\PP(L_i\oplus\one)\times\C) $:
\begin{align*}
f(x) & = u(x) \prod_{i\in I} s_i(x) ^{N_i} =  \prod_{i\in J} t_i ^{N_i} \cdot \left(u(x) \prod_{i\notin J} s_i(x)^{N_i}\right) \cdot  \prod_{i\in J} \left(s_i (x) /t_i\right)^{N_i};
\end{align*}
then 
$$f(x)\cdot\left(\prod_{i\in J}t_i^{N_i}\right)^{-1}\longrightarrow f_J\left(x,(v_i)_{i\in J}\right),$$
as $x\to\Do_J$, $t\to 0$, and $s_i(x)/t_i \to v_i\in L_i ^*$ for $i\in J$, and 
\begin{align}\label{def:f_J}
f_J\left(x,(v_i)_{i\in J}\right) = u_J(x)\prod_{i\in J}v_i^{N_i}
\end{align}
where $\displaystyle u_J(x)=u(x)\prod_{i\notin J}s_i(x)^{N_i}$ is a regular nowhere vanishing section of $\displaystyle\bigotimes_{i=1}^dL_i^{-N_i}$ over $\Do_J$. \\

Given $k\in\Np^J$, we equip $L_J^\star$ with the $\C^*$-action defined by $\lambda\cdot\left(x,(v_i)_{i\in J}\right)=\left(x,\left(\lambda^{k_i} v_i\right)_{i\in J}\right)$.

\begin{prop}\label{prop:actions}
The class $\left[(\pi_J,f_J):\C^*\acts L_J^\star\to D\times\C^*\right]$ is well-defined in $\KDC$ and does not depend on the choice of $k\in\Np^J$.
\end{prop}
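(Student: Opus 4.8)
The plan is to verify the two assertions of Proposition \ref{prop:actions} separately: first that the pair $(\pi_J, f_J)$ really defines an object of $\VarDC$ (so that its class makes sense in the Grothendieck ring), and second that the class is independent of the weight vector $k \in \Np^J$. For the first point, I would check the defining conditions of $\VarDCn$ for a suitable $n$. The $\C^*$-action $\lambda\cdot(x,(v_i)_{i\in J}) = (x,(\lambda^{k_i}v_i)_{i\in J})$ is manifestly good: $L_J^\star$ is a fibre product of the complements of zero sections in line bundles over $\Do_J$, hence quasi-projective, and on a quasi-projective variety with a linearisable $\C^*$-action every orbit lies in an affine open. The projection $\pi_J : L_J^\star \to \Do_J \subset D$ is $\C^*$-invariant by construction, giving condition (i). For condition (ii), from the formula \eqref{def:f_J}, $f_J(x,(v_i)_{i\in J}) = u_J(x)\prod_{i\in J} v_i^{N_i}$, so $f_J(\lambda\cdot(x,(v_i))) = u_J(x)\prod_{i\in J}(\lambda^{k_i}v_i)^{N_i} = \lambda^{\sum_{i\in J} k_i N_i} f_J(x,(v_i))$; thus condition (ii) holds with $n = n(k) \coloneq \sum_{i\in J} k_i N_i$. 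One must also check $f_J$ takes values in $\C^*$, which is exactly the content of the sentence after \eqref{def:f_J}: $u_J$ is nowhere vanishing on $\Do_J$ and each $v_i \in L_i^*$ is nonzero. Hence $[(\pi_J, f_J) : \C^* \acts L_J^\star \to D\times\C^*]$ is a well-defined element of $\KDCn \hookrightarrow \KDC$ for this $n$.

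For the independence of $k$, I would argue in two stages, reducing the general case to comparisons that are visibly covered by the relations defining $\KDC$. First, recall (Remark \ref{rmk:injlim}, and the definition of $\VarDC$ as an inductive limit) that rescaling the action by $\lambda \cdot' x = \lambda^m \cdot x$ identifies the class of $(\pi_J,f_J)$ with weights $k$, living in $\KDCn$, with the class of the same space and map but weights $mk$, living in $\KDC^{C^*, mn}$; so passing to a common multiple we may assume all the $k$ we compare have the same associated integer $n(k) = \sum k_i N_i$ — wait, this is not automatic, so instead I pass to the inductive limit from the start and only need to compare, for two weight vectors $k$ and $k'$, the resulting elements of $\KDC$. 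Now the key device is relation \ref{item:liftact}: given one weight vector $k$, form $Y \times \C^m$ (here I take $m = |J|$, or more cleverly embed $L_J^\star$ appropriately) and observe that two different "lifted" good actions on a product, agreeing with the base action on the $Y$-factor, give the same class. The standard trick is to realise both $(L_J^\star, k\text{-action})$ and $(L_J^\star, k'\text{-action})$ as such lifts over a common base with a fixed action, for instance over $\Do_J$ itself (with trivial action) or over $L_J^\star$ with some reference weight; the freedom in relation \ref{item:liftact} to choose $\sigma,\sigma'$ then absorbs the difference between $k$ and $k'$. Concretely, I would exhibit an explicit $\C^*$-equivariant identification or a chain of them — in the spirit of \cite{GLM06} — that writes the $k'$-weighted space as $Y\times\C^m$ with one lifted action and the $k$-weighted space as the same with another lifted action, both projecting to the same action on $Y$.

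The main obstacle I expect is making the second stage genuinely rigorous rather than hand-wavy: relation \ref{item:liftact} is stated for a product $Y \times \C^m$ with actions that restrict to a \emph{fixed} action $\tau$ on $Y$, and one has to arrange the two weighted spaces $L_J^\star$ to actually appear in this form with a common $(Y,\tau)$. The cleanest route is probably to reduce to the one-variable case $|J| = 1$ first (where $L_J^\star$ is just $L_i^*$ over $\Do_{\{i\}}$ and changing $k_i$ to $k_i'$ amounts to precomposing the action with $\lambda \mapsto \lambda^{k_i'/\gcd}$ type maps — handled by the inductive limit directly), and then handle general $J$ by an induction on $|J|$, peeling off one factor at a time and invoking \ref{item:liftact} with $Y = L_{J\setminus\{i\}}^\star$ and $\C^m = \C = L_i^*$-coordinate, the two lifted actions being the ones with weight $k_i$ and $k_i'$ on the last factor while keeping the $k$-action (already reconciled by induction) on $Y$. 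The bookkeeping of which Grothendieck ring $\KDCn$ each intermediate class lives in, and checking that the transition maps $\theta_{m_2 m_1}$ send one comparison to the next, is the fiddly part but is routine once the inductive scaffolding is set up. I would also note, as in \cite[(3.5)]{GLM06}, that this is precisely the independence statement underlying the well-definedness of the motivic zeta function, so the argument can be cited from there if a self-contained proof is not desired.
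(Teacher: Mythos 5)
Your proof of the first assertion (that $(\pi_J,f_J)$ is a genuine object of $\VarDCn$ with $n=\sum_{i\in J}k_iN_i$) is correct and matches what the paper leaves implicit. The issue is in the second assertion, the independence of $k$, where your concrete plan would not go through.

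Relation \ref{item:liftact} applies to a map $\psi = \varphi\circ\pr_Y : Y\times\C^m\to D\times\C^*$ that \emph{factors through the projection to} $Y$; the $\C^m$-factor must be invisible to the map. Your proposed decomposition — $Y = L_{J\setminus\{i\}}^\star$ and a last coordinate $L_i^*$ — does not satisfy this: $f_J(x,(v_i)) = u_J(x)\prod_{i\in J}v_i^{N_i}$ depends on \emph{every} coordinate, so $f_J$ does not factor through $L_{J\setminus\{i\}}^\star$. (There is a secondary problem that $L_i^*$ has fibre $\C^*$ rather than $\C$, but that could be patched by additivity; the factorisation issue cannot.) There is also a logical gap in the induction itself: relation \ref{item:liftact} requires the two actions to restrict to the \emph{same fixed} action $\tau$ on $Y$, whereas your inductive hypothesis only says the two base actions give the same class in $\KDC$. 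That is a much weaker statement, and it is not what the relation lets you exploit.

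What you are missing is exactly the change of coordinates the paper performs, which is the mathematical heart of the argument. Set $N_J=\gcd\{N_i : i\in J\}$ and pick $\alpha_i\in\Z$ with $\sum\alpha_iN_i=N_J$. Working locally over a Zariski open on which the bundles trivialise, the map
$$\Psi_J\big(x,(v_i)_{i\in J}\big)=\Big(x,\,r=\prod_{i\in J}v_i^{N_i/N_J},\,\big(r^{-\alpha_i}v_i\big)_{i\in J}\Big)$$
identifies $L_J^\star$ with $\widehat{L_J^\star}\times\big\{w\in(\C^*)^J : \prod w_i^{N_i/N_J}=1\big\}$, and under this identification $(\pi_J,f_J)$ becomes $(x,r,w)\mapsto(x,u_J(x)r^{N_J})$ — that is, the map now genuinely factors through the first factor. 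The given weight-$k$ action transports to $\lambda\cdot(x,r)=(x,\lambda^{n/N_J}r)$ on $\widehat{L_J^\star}$, where $n=\sum k_iN_i$, together with some action on the $w$-torus. Now \ref{item:liftact} applies: the class depends only on the action on $\widehat{L_J^\star}$, not on the extra torus factor, and the residual dependence on $n/N_J$ disappears because the action $\lambda\mapsto\lambda^{n/N_J}r$ is the $\theta$-transition image of the weight-$1$ action, so all choices of $k$ yield the same element of $\KDC$ by Remark \ref{rmk:injlim} (this is the correct way to make rigorous what you observed in the case $|J|=1$). It is a single global change of coordinates and one application of \ref{item:liftact}, rather than an induction on $|J|$; without the Bézout isomorphism $\Psi_J$ there is no factor to which \ref{item:liftact} can be applied.
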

\begin{proof}
By additivity we may work with coordinates, up to working over an open Zariski set. \\
Set $N_J=\gcd\{N_i,\,i\in J\}$; then, by Bézout's identity, there exist $\alpha_i\in\Z$ such that $N_J=\sum\alpha_iN_i$. 
Define $\widehat{L_J^\star}\coloneq\left\{(x,r)\in\Do_J\times\C^*,\,u_J(x)r^{N_J}\neq0\right\}$. Then $$\xymatrix{L_J^\star \ar[rr]_-\simeq^-{\Psi_J} \ar[rd]_{(\pi_J,f_J)}&&\widehat{L_J^\star}\times\left\{w\in(\C^*)^J,\,\displaystyle\prod_{i\in J}w_i^{N_i/N_J}=1\right\} \ar[ld]^{\left(x,u_J(x)r^{N_J}\right)} \\ &\Do_J\times\C^*\subset D\times\C^*& },$$ where $$\Psi_J\left(x,(v_i)_{i\in J}\right)=\left(x,r=\prod_{i\in J}v_i^{N_i/N_J},\left(r^{-\alpha_i}v_i\right)_{i\in J}\right),$$ and $$\Psi_J^{-1}(x,r,w)=\left(x,\left(r^{\alpha_i}w_i\right)_{i\in J}\right).$$

For $k\in\Np^J$, the action on $L_J^\star$ induces via $\Psi_J$ the action $\lambda\cdot(x,r)=(x,\lambda^{n/N_J}r)$ on $\widehat{L_J^\star}$ where $n=\sum_{i\in J}k_iN_i$.

Since $N_i>0$, it follows from \ref{item:liftact} that the class induced by $\Psi_J(L_J^\star)$ in $\KDCn$ depends only on the action on $\widehat{L_J^\star}$. Finally, using Remark \ref{rmk:injlim}, the class induced in $\KDC$ does not depend on $n/N_J$, and is, thus, independent of $k$.

\end{proof}

\begin{defi}\label{def:Milnorfibre}
The motivic Milnor fibre of $f$ is defined by
\begin{equation}\label{eqn:Sf}\Sf \coloneq -\sum_{\varnothing\neq J\subset I}(-1)^{|J|}\left[(\pi_J,f_J):\C^*\acts L_J^\star\to D\times\C^*\right]\in\KDC.\end{equation}
\end{defi}

\begin{rmk}\label{rmk:derniere}
The motivic Milnor fibre is usually defined as the formal limit of a power series called the motivic zeta function; see Section \ref{sec:localcase}. This formal limit is defined using a rational formula for the motivic zeta function, which is obtained using resolution of singularities together with the motivic change of variables formula. Nonetheless, the motivic Milnor fibre doesn't depend on the choice of resolution since the motivic zeta function is intrinsically defined in terms of arc spaces.

The rationality formula relies on motivic integration, for which it is necessary to be able to use the multiplicative inverse of the Lefschetz motive $\LL\coloneq [\C]\cdot\II$. Therefore, the motivic Milnor fibre is usually defined as an element of the localised Grothendieck ring $\MDC\coloneq\KDC\left[\LL^{-1}\right]$.

In the present set-up, $D\coloneq f^{-1}(0)$ is already assumed to be a divisor with simple normal crossings, so we can directly use formula \eqref{eqn:Sf} as a definition for the motivic Milnor fibre of $f$ as an element of $\KDC$ (and not $\MDC$); compare with \cite[(3.6.3)]{GLM06}.

The independence of $k$ in Proposition \ref{prop:actions}, and thus relation \ref{item:liftact}, is usually used to prove that the motivic zeta function is rational; however, we use it in Section \ref{sec:localcase} to prove that the (local) motivic Milnor fibre of $f:(X,x_0)\to(\mathbb C,0)$ is actually well-defined without inverting the Lefschetz motive and without the assumption that $f$ is normal crossing (see Theorem \ref{thm:invSf}). This fact has already been noticed by J. Nicaise and S. Payne \cite{NP} and A. Forey \cite[Proposition 5.14]{For19} using other methods.
\end{rmk}

\section{The real oriented graph construction and the complete Milnor fibration}\label{sec:cpl}
Real oriented blowings-up are used to study the fibres of complex algebraic or analytic functions and mappings; see \cite{AC75}, \cite{Par98}, \cite{Kaw02}. The underlying idea is the following. Consider an analytic function (or more generally a mapping as in \cite{Kaw02}) on a nonsingular space. Then, by resolution of singularites, after blowing-up centres included in the special fibre, we may assume that this fibre is a normal crossing divisor. Nonetheless, such an operation on the special fibre does not make the function locally topologically trivial and it is impossible to do it simply by means of complex analytic geometry. On the other hand, the fibre product of real oriented blowings-up of the components of the divisor with normal crossings has this property. Though the space obtained this way is not a complex analytic variety, it carries a lot of information, including the weight filtration and the Hodge filtration; cf. \cite{Kaw02}.

The major contribution of this section is the \emph{real oriented multigraph construction} introduced in \S\ref{ss:rom}. The main idea consists in replacing the blowings-up involved in the multigraph construction, see \S\ref{ss:multigraph}, by real oriented blowings-up. 
The resulting space may be visualized as the ambient space with the divisor with normal crossings replaced by an infinitesimal punctured neighbourhood. Then, in \S\ref{ss:loggeo}, we explain the relation between the spaces obtained with this geometric construction and the log-theoretic spaces from Section \ref{sec:logtheoric}.

\subsection{Examples}\label{ss:examples}
Let us explain the construction of $\Mcpl$, $\Mmot$, and $\Mtop$ on simple examples.

\begin{ex}
For $M=\C$ and $D=\{0\}$ we have $(\mathbb C,0)_0^\cpl=(\mathbb C,0)_0^\mot\sqcup(\mathbb C,0)_0^\top$, where $(\mathbb C,0)_0^\top=S^1$ is the exceptional set of the real oriented blowing-up of $\mathbb C$ at $0$ and $(\mathbb C,0)_0^\mot=\mathbb C^*=\Rp\times S^1$.  
\end{ex}

In general, when $D$ has several components, the inclusion 
$$\Mcpl\supset \Mmot \sqcup \Mtop$$
 may be strict and we call the difference $\Mcpl\setminus(\Mmot \sqcup \Mtop)$ the \emph{mixed part}.  It corresponds to the elements that are "at infinity" only for some, but not all, components of $D$. See the following example (and compare with Example \ref{ex:log}). 

 \begin{ex}\label{ex:ex0}
For $f(x,y)=xy$ defined on $\mathbb C^2$, we get the following schematic picture of $\Mcpl\restrD$.

\begin{center}
\begin{minipage}{6cm}
\begin{tikzpicture}[scale=.75]
\fill[pattern=crosshatch dots,pattern color=Chartreuse3,thick] (-4,1) rectangle (-1,-1);
\fill[pattern=crosshatch dots,pattern color=Chartreuse3,thick] (-1,4) rectangle (1,1);
\fill[pattern=crosshatch dots,pattern color=Chartreuse3,thick] (-1,-1) rectangle (1,-4);
\fill[pattern=crosshatch dots,pattern color=Chartreuse3,thick] (1,1) rectangle (4,-1);
\fill[pattern=crosshatch dots,pattern color=Chartreuse3] (-1,-1) rectangle (1,1);
\draw[ultra thick,DarkOrchid3,dashed] (-1,-1) rectangle (1,1);
\draw[very thick,DarkOrange1] (-4,1) -- (-1,1) -- (-1,4);
\draw[very thick,DarkOrange1] (-4,-1) -- (-1,-1) -- (-1,-4);
\draw[very thick,DarkOrange1] (4,1) -- (1,1) -- (1,4);
\draw[very thick,DarkOrange1] (4,-1) -- (1,-1) -- (1,-4);
\draw[very thick] (-4,0) -- (4,0);
\draw[very thick] (0,-4) -- (0,4);
\draw[very thick,red] (-2.5,1) -- (-2.5,-1) node[pos=.25,right] {$\mathbb C^*$};
\fill (-2.5,0) circle (3pt);
\fill[MidnightBlue] (-2.5,1) circle (3pt) node[above] {$S^1$};
\fill[MidnightBlue] (-2.5,-1) circle (3pt);
\end{tikzpicture}
\end{minipage}
\begin{minipage}{7.75cm}
\footnotesize
$\Mmot\restrD$, in green, represents geometrically an infinitesimal tubular neighbourhood of $D$ (with $D$ removed). \\
The boundary of $\Mtop\restrD$, in orange, coincides with the total space of A'Campo's first model of the topological Milnor fibration (see Section \ref{sec:acampo}). \\
The zero section (in black) is not a part of $\Mcpl\restrD$. \\
The square bounded by the purple dashed lines represents $\Mcpl_0$, that is the part over the origin, and the 
purple square itself (without its vertices) represents the mixed  part.  \\
Thus, above the origin, we have: \\
$\Mcpl_0=\Mtop_0\sqcup\Mmot_0\sqcup(M,D)_0^{\mathrm{mix}}$ \\
$\hphantom{\Mcpl_0}=(S^1\times S^1)\sqcup(\mathbb C^*\times\mathbb C^*)\sqcup\big((\mathbb C^*\times S^1)\sqcup(S^1\times\mathbb C^*)\big)$.\\
And above $x\in D \setminus\{0\}$, we have: \\
$\Mcpl_x=S^1\sqcup\mathbb C^*=\Mtop_x\sqcup\Mmot_x$. \\
The circle $S^1$ at infinity of the fibre (in red) is represented by two points in blue: we see only $S^0$ since the picture is real. \\
\end{minipage}
\end{center}
\end{ex}

\subsection{The real oriented graph construction}\label{ss:rog}
Recall that the real oriented blowing-up of $\C$ at the origin is defined by $$\pi=\pr_{\mathbb C}:\widehat{\mathbb C}=\left\{(z,e^{i\theta})\in\mathbb C\times S^1\,:\,|z|e^{i\theta}=z\right\}\to\mathbb C$$ or equivalently, in terms of polar coordinates, by $$\pi:\widehat\C=\Ro\times S^1\to\C,\quad\pi(r,e^{i\theta})=re^{i\theta}.$$
Thus, the exceptional divisor $\pi^{-1}(0)\simeq S^1$ is the space of rays from the origin.

The quotient $\left((\C\times\Ro)\setminus\{(0,0)\}\right)/\Rp$ by the diagonal action $t\cdot (z,r)\coloneq(tz,tr)$ can be identified with the compactification of $\C$ by $S^1$ at infinity (which we denote by $\widehat\PP^1$), or, equivalently, with the real oriented blowing-up of $\PP^1$ at $\infty\coloneq[1:0]$. If we denote the latter by $\pi:\widehat\PP^1\to\PP^1$, then $\pi^{-1}(\infty)\simeq S^1$.

From now on and throughout \S\ref{ss:rog}, we assume that $D$ has a single irreducible component. More precisely, let $M$ be a nonsingular algebraic variety and let $s:M\to L$ be a regular section of an algebraic line bundle $L$ defining a nonsingular hypersurface $D=s^{-1}(0)$.

We define $\hat L\subset L$ by $\hat L_x\coloneq\Rp s(x)$ if $s(x)\neq0$ and $\hat L_x\coloneq L_x\setminus\{0\}$ otherwise. Then the real oriented blowing-up of $M$ along $D$ is $\hat M\coloneq\hat L/\Rp$ the quotient by the action of $\Rp$ on $\hat L$. Note that $\hat M$ depends only on $D$ since if $s'$ is another suitable section then $s'=us$ for some non-vanishing $u$.

We denote by $\Sph(L\oplus\one_{\ge0})$ the fibrewise compactification of $L$ by $S^1$ at infinity. Formally, it is defined on the fibre above $x$ by taking the quotient $\left(\left(L_x\times\Ro\right)\setminus\{(0,0)\}\right)/\Rp$ where the action is diagonal.  Then $\Sph(L\oplus\one_{\ge0})=L\sqcup\Sph(L)$. Here $\one_{\ge0}$ stands for the nonnegative vectors of the trivial $\R$-bundle in $M$.

Consider $$M\times\Rp\longhookrightarrow\Sph(L\oplus\one_{\ge0})\times\Ro\stackrel{\displaystyle\rho}{\longrightarrow}\Ro,$$
 where the inclusion is given by $(x,t)\to\left((s(x),t)\mod\Rp,t\right)$ and $\rho$ denotes the projection onto the second factor. Denote by $\Sph\M$ the Euclidean closure of the image of $M\times\Rp$ in $\Sph(L\oplus\one_{\ge0})\times\Ro$ and by $\rho_{\Sph}$ the restriction of $\rho$ to $\Sph\M$. Then, the \emph{real oriented graph construction} is given by
$$\rho_{\Sph}^{-1}(0)=\Sph(L_{|D}\oplus\one_{\ge0})\bigcup_{\Sph(L_{|D})}\widehat{M},$$
where $\widehat{M}$ is the real oriented blowing-up of $M$ along $D$ and ${\Sph(L_{|D})}$ is its exceptional divisor.

 In particular, we have a copy of $M\setminus D$ at infinity (with respect to the compactification of the vector bundles; compare with \S\ref{ss:McP}) in $\rho_{\Sph}^{-1}(0)$.

We set
\begin{equation}\label{eqn:sphdecomposition}
\Mcpl\coloneq\Sph(L_{|D}^*\oplus\one_{\ge0})\bigcup_{\Sph(L_{|D})}\widehat{M},
\end{equation}
that is, $\Mcpl$ is $\rho_{\Sph}^{-1}(0)$ with the zero section of $L_{|D}$ removed and
\begin{equation}
\Mmot\coloneq \Mcpl \setminus \Sph(L_{|D}) \label{eqn:MmotDi},
\end{equation}
which consists of $L_{|D}^*$ union the image of $M\setminus D$ in $\widehat M$.

Finally we define  $\Mtop$ as the real oriented blowing-up of $M$ along $D$
\begin{equation}
\Mtop\coloneq \widehat{M}\label{eqn:MtopDi},
\end{equation}
which is the part of $\Mcpl$ at infinity and which coincides with the Euclidean closure of $M\setminus D$ in $\Mcpl$.

 Note that the copy of $M\setminus D$ at infinity lies in all three of the above spaces, and is formally given by $M\setminus D \ni x \mapsto (s(x) \mod\Rp) \in \Sph(L)\subset\Mcpl$. Its complement, $\Mcpl\restrD$,  is the disjoint union of $\Mtop\restrD=\Sph(L_{|D})$ and $\Mmot\restrD=L_{|D}^*$.

\begin{rmk}\label{rmk:SP}
The formula \eqref{eqn:sphdecomposition} is analogous to \eqref{eqn:decomposition}. To make this analogy precise, we note that $\PP M$ is the image $\Sph\M$ in $\PP(L\oplus\one)\times\C$ via the projection
$$
\Sph(L\oplus\oneplus)\times\Ro
{\longrightarrow}\PP(L\oplus\one)\times\C .
$$
Indeed, let $([v:1],0)\in\PP M\cap\rho^{-1}(0)$. Then, for some sequences $x\to x_0\in D$ and $t\to0$ with $t\in\C$, $\frac{s(x)}t\to v$. By choosing the local coordinates such that $s(x)=x_1$, we may suppose that $s\left(\frac{|t|}{t} x\right)= \frac{|t|}{t}s(x)$ and then $$\left(\left[s\left(\frac{|t|}{t} x\right):|t|\right], |t| \right) = \left( [s(x):t], |t| \right) \to ([v:1],0).$$
Therefore  $([v:1],0)$  is the image of the projection of the limit of $\left( \left(s\left(\frac{|t|}{t} x\right),|t|\right) \mod\Rp, |t| \right) \in \Sph(L\oplus\oneplus)\times\Ro$.

The projection $\Sph(L_{|D}\oplus\oneplus)\to\PP(L_{|D}\oplus\one)$ is bijective except over the section at infinity. That is to say, $$\Sph(L_{|D}\oplus\oneplus)\setminus\Sph(L_{|D})\simeq\PP(L_{|D}\oplus\one)\setminus\PP(L_{|D})\simeq L_{|D}.$$
\end{rmk}

\subsection{The real oriented multigraph construction}\label{ss:rom}
In the general case -- i.e. when $D=\bigcup_{i\in I} D_i$ has several components -- we consider the fibre product over $M$ of the real oriented graph constructions associated to each $D_i$, i.e. $$M\times\Rp^{I}\longhookrightarrow\fprod{M}{i\in I}\left(\Sph(L_i\oplus\one_{\ge0})\times\Ro\right)\stackrel{\displaystyle\rho}{\longrightarrow}\Ro^I.$$
We denote by $\Sph\M$ the closure of the image of $M\times\Rp^{I}$ in $\fprod{M}{i\in I}\left(\Sph(L_i\oplus\one_{\ge0})\times\Ro\right)$ and the restriction of $\rho$ to $\Sph\M$ by $\rho_{\Sph}$. It coincides with the fibre product of the real oriented graph constructions $\rho_{\Sph,i}: \Sph_i\M \to \R$ associated to the fibre bundles $L_i$.
\begin{defi}\label{defi:Mtop}
We set 
\begin{equation}
\Mcpl\coloneq \fprod{M}{i\in I}\Mcpli\subset\rho_{\Sph}^{-1}(0), \label{eqn:Mext}
\end{equation}
\begin{equation}
\Mmot\coloneq\fprod{M}{i\in I} \Mmoti\subset \Mcpl, \label{eqn:Mmot}
\end{equation}
and,
\begin{equation}
\Mtop\coloneq\fprod{M}{i\in I} \Mtopi\subset\Mcpl, \label{eqn:Mlog}
\end{equation}
where $\Mcpli$, $\Mmoti$, and $\Mtopi$ are defined in \eqref{eqn:sphdecomposition}, \eqref{eqn:MmotDi}, and \eqref{eqn:MtopDi}, respectively.
\end{defi}

\begin{rmk}
The space $\Mcpl$ coincides with $\rho_{\Sph}^{-1}(0)$ minus the zero sections, similar to \eqref{eqn:sphdecomposition}. However, $\Mcpl\restrD$ is no longer the union of $\Mmot\restrD$ and $\Mtop\restrD$ as soon as $|I|\geq 2$ due to the presence of mixed part; see Example \ref{ex:ex0}.
\end{rmk}

Note that, again, the copy of $M\setminus D$ at infinity, formally given by $$M\setminus D \ni x \mapsto (s_i(x) \mod\Rp)_{i\in I} \in \prod_{i\in I} \Sph(L_i)\subset\Mcpl,$$
is included in all above three spaces.

The space $\Mcpl$ may be visualized as $M$ with $D$ replaced by an infinitesimal punctured neighbourhood. Then $\Mmot$ is the infinitesimal part of $\Mcpl$ together with a copy of $M\setminus D$ at infinity, and $\Mtop$ is the part of $\Mcpl$ at infinity.

\begin{rmk}
We can adapt Remark \ref{rmk:SP} to the current setting with several components in $D$ since the components are normal crossings only.
\end{rmk}

Consider the usual stratification of $M$ associated to a divisor $D$ -- i.e. given by $\Do_J\coloneq D_J\setminus\bigcup_{i\in I\setminus J}D_i$ for $J\subset I$, where $D_J\coloneq\bigcap_{i\in J}D_i$. Denote by $\pr:\rho_{\Sph}^{-1}(0)\to M$ the projection to $M$. Then $\pr^{-1}(\Do_J)$ stratifies the special fibre $\rho_{\Sph}^{-1}(0)$ as follows.

For $J=\varnothing$, we get $\pr(\Do_J)=\pr^{-1}(M\setminus D)\simeq M\setminus D$. Otherwise, if $J\neq\varnothing$, we have $$\pr^{-1}(\Do_J)=\left\{\left([s_i(x):0]\right)_{i\in I\setminus J},\,x\in\Do_J\right\}\times \fprod{\Do_J}{i\in J}\Sph({L_i}_{|\Do_J}\oplus\oneplus)\simeq\fprod{\Do_J}{i\in J}\Sph({L_i}_{|\Do_J}\oplus\oneplus).$$
In what follows, for $J\subset I$ we set $$\sbar{L_J}\coloneq\fprod{\Do_J}{i\in J}\Sph({L_i}_{|\Do_J}\oplus\oneplus),$$
and for $J\subset K\subset I$, we set $$\Sph_{KJ}\coloneq\fprod{\Do_K}{i\in K}\Sph_{KJ,i}\subset\sbar{L_{K}},$$
where $$\Sph_{KJ,i}\coloneq\left\{\begin{array}{cl}\Sph\left({L_i}_{|{\Do_K}}\oplus\oneplus\right)&\text{ if }i\in J \\ \Sph\left({L_i}_{|{\Do_K}}\right)&\text{ if }i\in K\setminus J\end{array}\right.$$
Thus, in particular, $\Sph_{KK}=\sbar{L_{K}}$ and $\Sph_{K\varnothing}=\fprod{\Do_K}{i\in K}\Sph\left({L_i}_{|\Do_K }\right)$ is the part of $\sbar{L_{K}}$ at infinity. \\

The special fibre $\rho_{\Sph}^{-1}(0)$ may be written as the following disjoint union of real algebraic varieties of dimension $2\dim_\C M$ (if non-empty)
$$\rho_{\Sph}^{-1}(0) =\bigsqcup_{J\subset I}\sbar{L_J} .$$
The closure of $\sbar{L_J}$ in $\rho_{\Sph}^{-1}(0)$ is the following disjoint union: $$\overline{\sbar{L_J}}=\bigsqcup_{K\supset J}\Sph_{KJ}.$$

The next lemma, Lemma \ref{lem:LJ}, relates 
$$\Mmot_{|D}=\bigsqcup_{\varnothing\neq J\subset I}\Mmot_{|\Do_J}\simeq\bigsqcup_{\varnothing\neq J\subset I}L_J^\star $$
and the motivic Milnor fibre $\Sf$ introduced in \S\ref{ss:mMf}, see  Definition \ref{def:Milnorfibre}.

\begin{lemma}\label{lem:LJ}
For $J\subset I$, $\Mmot_{|\Do_J}\simeq L_J^\star$.
\end{lemma}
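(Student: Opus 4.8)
The plan is to unwind the definition of $\Mmot$ from Definition \ref{defi:Mtop} and identify its restriction to the stratum $\Do_J$ stratum-by-stratum, reducing to the single-component case already treated in \S\ref{ss:rog}. Recall from \eqref{eqn:Mmot} that $\Mmot = \fprod{M}{i\in I}\Mmoti$, and from \eqref{eqn:MmotDi} that $\Mmoti = \Mcpli \setminus \Sph(L_{i|D_i})$ consists of $L_{i|D_i}^*$ together with the copy of $M\setminus D_i$ at infinity inside $\widehat{M}$ (the real oriented blowing-up of $M$ along $D_i$). First I would fix $J\subset I$ and analyse $\Mmoti_{|\Do_J}$ for each $i\in I$ separately: if $i\in J$, then $\Do_J\subset D_i$, so over $\Do_J$ the space $\Mmoti$ restricts to $L_{i|\Do_J}^*$ (the part at infinity $\Sph(L_{i|D_i})$ is exactly what has been removed, and the blown-up copy of $M\setminus D_i$ does not meet $\Do_J$ since $\Do_J\subset D_i$); if $i\notin J$, then $\Do_J\cap D_i=\varnothing$, so $\Mmoti$ restricted over $\Do_J$ is just a copy of $\Do_J$ itself (the section-at-infinity copy of $M\setminus D_i$), which contributes trivially to the fibre product.

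The second step is to assemble these restrictions via the fibre product over $M$. Restricting the fibre product $\fprod{M}{i\in I}\Mmoti$ to the locus lying over $\Do_J$ commutes with taking the fibre product, so
$$
\Mmot_{|\Do_J} = \fprod{\Do_J}{i\in I} \left(\Mmoti_{|\Do_J}\right) = \left(\fprod{\Do_J}{i\in J} L_{i|\Do_J}^*\right) \times_{\Do_J} \left(\fprod{\Do_J}{i\in I\setminus J} \Do_J\right).
$$
The factors with $i\in I\setminus J$ are each canonically $\Do_J$, so they drop out of the fibre product, and what remains is $\fprod{\Do_J}{i\in J} L_{i|\Do_J}^*$, which is precisely $L_J^\star$ by the definition in \S\ref{ss:mMf}. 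One point that needs care here is matching the concrete coordinate description: in the multigraph construction $L_J^\star \subset \pbar{L_J}\subset\rho^{-1}(0)$ sits inside $\pr^{-1}(\Do_J)$ via the identification \eqref{eqn:singleton}, where the extra factors $([s_i(x):0])_{i\in I\setminus J}\simeq([1:0])_{i\in I\setminus J}$ are collapsed; the analogous identification on the real oriented side is $\pr^{-1}(\Do_J)\simeq\sbar{L_J}=\fprod{\Do_J}{i\in J}\Sph({L_i}_{|\Do_J}\oplus\oneplus)$, and under it $\Mmot_{|\Do_J}$ becomes the locus where each $\Sph$-coordinate lies in the finite part $L_{i|\Do_J}$ and is moreover nonzero, i.e. lies in $L_{i|\Do_J}^*$.

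I do not expect a serious obstacle: the statement is essentially bookkeeping, and the only genuinely substantive input — that the real oriented graph construction for a single smooth component $D_i$ produces $L_{i|D_i}^*$ over $D_i$ as the "motivic part" — is already established in \S\ref{ss:rog}, specifically in \eqref{eqn:sphdecomposition}--\eqref{eqn:MtopDi} and the sentence immediately following them. The mild subtlety to be explicit about is that passing to the restriction over $\Do_J$ interacts correctly with the fibre product (i.e. $(\fprod{M}{i}E_i)_{|\Do_J}=\fprod{\Do_J}{i}(E_{i|\Do_J})$, which holds since $\pr$ on each factor is the composite with the bundle projection to $M$), together with the harmless collapsing of the trivial $i\notin J$ factors. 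Once those identifications are recorded, the canonical bijection $\Mmot_{|\Do_J}\simeq L_J^\star$ follows, and it is moreover compatible with the maps to $D\times\C^*$ via $f_J$ from \eqref{def:f_J}, which is what is used in the sentence preceding the lemma to recover $\Sf$ on $\Mmot_{|D}$.
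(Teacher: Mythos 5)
Your proof is correct and follows essentially the same route as the paper: you expand the fibre product $\Mmot = \fprod{M}{i\in I}\Mmoti$, restrict over $\Do_J$, identify each factor $\Mmoti_{|\Do_J}$ as $L^*_{i|\Do_J}$ for $i\in J$ and as the section-at-infinity copy of $\Do_J$ (concretely $x\mapsto s_i(x)\bmod\Rp$) for $i\notin J$, and collapse the trivial factors. The paper simply writes down the resulting explicit bijection $(x,(v_j)_{j\in J})\mapsto\bigl(x,(v_j)_{j\in J},(s_i(x)\bmod\Rp)_{i\in I\setminus J}\bigr)$, which is exactly what your factor-by-factor bookkeeping establishes.
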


\begin{proof}
The isomorphism is given by
$$\begin{array}{ccc}L_J^\star&\to&\Mmot_{|\Do_J}\\(x,(v_j)_{j\in J})&\mapsto&\left(x,(v_j)_{j\in J},(s_i(x) \mod\Rp)_{i\in I\setminus J}\right).\end{array}$$
\end{proof}

Though $L_J^\star$ does not depend on the choice of the sections $s_i$, the space $\Mmot_{|\Do_J}$ does. This is because of the last component $(s_i(x) \mod\Rp)_{i\in I\setminus J}$ in the above formula. This dependence is not essential and the simplest way to get rid of it is to find the model of $\Mmot_{|\Do_J}$ independent of such a choice.  This is, of course $\Mmlog_{|\Do_J}$; see \S\ref{ss:loggeo}.

We denote by $\sbar{L_J^\star}$ the bundle $\sbar{L_J}$ with the zero sections removed. Then $\sbar{L_J^\star}\cap L_J=L_J^\star$. Therefore, for $J\subset I$, we have $\Mcpl\restrDo\simeq\sbar{L_J^\star}$ and $$\Mcpl=\bigsqcup_{J\subset I}\Mcpl\restrDo\simeq\bigsqcup_{J\subset I}\sbar{L_J^\star}.$$

Note that $\Mtop\restrDo$ is the part of $\Mcpl\restrDo$ at infinity; indeed $$\Mtop\restrDo=\left\{\left(s_i(x)\mod\Rp\right)_{i\in I\setminus J},\,x\in\Do_J\right\}\times\fprod{\Do_J}{i\in J}\Sph({L_i}_{|\Do_J})\simeq\fprod{\Do_J}{i\in J}\Sph^\star({L_i}_{|\Do_J}).$$ Thus, \begin{equation}\label{eqn:Mtopstrat}\Mtop=\bigsqcup_{J\subset I}\Mtop\restrDo\simeq\bigsqcup_{J\subset I}\Sph(L_J),\end{equation} where $\Sph^\star(L_J)\coloneq\fprod{\Do_J}{i\in J}\Sph({L_i}_{|\Do_J})$. Again, $\Mtop\restrDo$ depends on the choices of the sections $s_i$.

\subsection{Comparison to the log-spaces}\label{ss:loggeo}

In this section we show that $\Mtop$, $\Mcpl$ and $\Mmot$ can be identified with the Kato--Nakayama log-space $\Mlog$, the complete log-space $\Mclog$, and the algebraic log-space $\Mmlog$, respectively.

\begin{thm}\label{thm:logspaces}
The sections $(s_i)_{i\in I}$ induce bijections
$$
\Mmot \simeq \Mmlog, \quad \Mtop \simeq \Mlog  , \quad  \Mcpl \simeq \Mclog.
$$
\end{thm}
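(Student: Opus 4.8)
The strategy is to construct the bijections stratum by stratum over the canonical stratification $M = \bigsqcup_{J \subset I} \Do_J$, and then check that these stratawise identifications glue into global bijections compatible with the projections to $M$. On each stratum $\Do_J$ the two sides have already been described: by Corollary \ref{cor:log-and-normal} we have $\Mmlog_{|\Do_J} \simeq N^\star_J{}_{|\Do_J}$ and $\Mlog_{|\Do_J} \simeq \Sph^\star(N_J{}_{|\Do_J})$, while by Lemma \ref{lem:LJ} and the discussion following it we have $\Mmot_{|\Do_J} \simeq L_J^\star = \fprod{\Do_J}{i \in J}{L_i^*}_{|\Do_J}$ and $\Mtop_{|\Do_J} \simeq \Sph^\star(L_J{}_{|\Do_J})$. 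Since $D_i$ is cut out by the section $s_i$ transverse to the zero section, the derivative $ds_i$ identifies the normal bundle ${N_i}_{|D_i}$ with ${L_i}_{|D_i}$ canonically; more precisely, for $x \in D_i$ and a local generator $g$ of the ideal of $D_i$, the relation $\Phi_i(g) = \partial_{v_i} g(x)$ of Lemma \ref{lem:log-and-normal} translates a normal vector $v_i$ into a value, and comparing $g$ with $s_i$ (which differ by a nowhere-vanishing factor that is absorbed on both sides, exactly as in the proof of Lemma \ref{lem:log-and-normal}) produces a canonical isomorphism ${L_i^*}_{|\Do_J} \simeq {N_i^*}_{|\Do_J}$, hence $L_J^\star{}_{|\Do_J} \simeq N_J^\star{}_{|\Do_J}$ and likewise for the sphere bundles. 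Composing gives the stratawise bijections $\Mmot_{|\Do_J} \simeq \Mmlog_{|\Do_J}$ and $\Mtop_{|\Do_J} \simeq \Mlog_{|\Do_J}$; the case of $\Mcpl$ is obtained by the same recipe after compactifying each line, i.e. replacing $L_i^*$ by $\Sph({L_i}\oplus\oneplus)$ minus its zero section on the geometric side and $\C^*$ by $\Ri \times S^1$ on the log side, the bijection on the extra "circle at infinity" and "mixed" strata being dictated by the decomposition $\Sph({L_i}_{|\Do_K}\oplus\oneplus) = {L_i}_{|\Do_K} \sqcup \Sph({L_i}_{|\Do_K})$ matching $\Ri \times S^1 = (\Rp \times S^1) \sqcup (\{\infty\}\times S^1)$.

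First I would record the canonical identification of ${L_i}_{|D_i}$ with ${N_i}_{|D_i}$ and check that, under it, the description of $\Mmlog$ in terms of normal bundles (Corollary \ref{cor:log-and-normal}) matches the description of $\Mmot$ in terms of the line bundles $L_i$ (Lemma \ref{lem:LJ}); the point is that the "extra" coordinate $(s_i(x) \bmod \Rp)_{i \in I \setminus J}$ appearing in $\Mmot_{|\Do_J}$ is precisely what encodes, in a chart-dependent way, the morphism $\M_x \to S^1$ evaluated on the generators $s_i$ with $i \notin J$ — and since $s_i(x) \neq 0$ on $\Do_J$ for $i \notin J$, this datum is forced, so dropping it gives a canonical bijection. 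Second, I would do the same for $\Mtop$ versus $\Mlog$, using that $\Sph(L_i)$ and $\Sph(N_i)$ are canonically identified. Third, for $\Mcpl$ versus $\Mclog$, I would handle a general stratum $\Do_J$: a point of $\Mclog_{|\Do_J}$ is a monoid morphism $\M_x \to \Ri \times S^1$ which, via the exact sequence \eqref{eq:logexactsequence} and the constraint on $\OO_x^*$, is determined by its values on $s_j$, $j \in J$ (each landing in $\Ri \times S^1$, with the value of $s_i$, $i \notin J$, again forced to be $s_i(x)$); this is exactly a point of $\fprod{\Do_J}{j\in J}\Sph({L_j}_{|\Do_J}\oplus\oneplus) = \sbar{L_J}$ with zero sections removed, which is $\Mcpl_{|\Do_J}$.

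The remaining work is to check that these bijections are continuous, respect the projections to $M$, are compatible with the inclusions $\Mmot \hookrightarrow \Mcpl \hookleftarrow \Mtop$ and with the projection $\Mcpl \to \Mtop$ (matching the log-side diagram at the end of \S\ref{sec:logtheoric}), and glue across strata. Gluing is the step requiring care: one must verify that the stratawise identifications are compatible with the closure relations $\overline{\sbar{L_J}} = \bigsqcup_{K \supset J}\Sph_{KJ}$ on the geometric side and with the analogous behaviour of the log-space along $D_K \supset D_J$. Concretely, for $J \subset K$ one has to see that a sequence in $\Do_J$ degenerating to $\Do_K$ produces, on the log side, a monoid morphism whose values on the generators $s_k$, $k \in K\setminus J$, go to the appropriate boundary point ($\infty$ or a ray), matching the geometric limit in $\Sph_{KJ,k}$; this is essentially the content already made explicit in the analysis of $\rho_{\Sph}^{-1}(0)$ in \S\ref{ss:rom} combined with the monoid diagram \eqref{eq:mono}.

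\textbf{Main obstacle.} The genuinely delicate point is the globalization: the geometric spaces $\Mcpl$, $\Mmot$, $\Mtop$ are built from the real oriented multigraph construction and a priori depend on the choice of the sections $s_i$ (as noted after Lemma \ref{lem:LJ}), whereas the log-spaces are canonical; so one must show that the composite bijection is canonical — i.e. that the section-dependence cancels. I expect this to follow from the fact that changing $s_i$ to $u_i s_i$ with $u_i$ nowhere-vanishing acts on $L_i^\star$ by fibrewise multiplication by $u_i(x)$, and on the normal-bundle side by the identical formula (this is exactly the invariance in the proof of Lemma \ref{lem:log-and-normal}), so the two section-dependent identifications $L_J^\star \simeq N_J^\star$ and $\Mmot_{|\Do_J} \simeq L_J^\star$ have their ambiguities cancel against each other. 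Making this cancellation precise and checking it is compatible with the blowing-up functoriality used later is where the bulk of the careful bookkeeping lies; everything else is the routine unwinding of the monoid morphisms in \eqref{eq:mono} against the fibre-product decompositions of $\rho_{\Sph}^{-1}(0)$.
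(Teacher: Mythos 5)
Your proposal follows the same route as the paper: stratum by stratum, use the differential of $s_i$ (the diagram \eqref{eqn:comm_diagr_s}) to identify ${N_i}_{|D_i}$ with ${L_i}_{|D_i}$, compose with Lemma~\ref{lem:log-and-normal}/Corollary~\ref{cor:log-and-normal} to get $\Mmlog_{|\Do_J}\simeq N_J^\star{}_{|\Do_J}\simeq L_J^\star\simeq\Mmot_{|\Do_J}$, then pass to circle/sphere bundles for the $\log$ case and compactify for $\clog$. The paper realises this by writing the explicit formula $(x,\Phi)\mapsto\big((w_j=\Phi(s_j/\ell_j)\ell_j)_{j\in J(x)},(s_i)_{i\in I\setminus J(x)}\big)$ and its $\log$/$\clog$ analogues, which is exactly the map you describe.

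The one place you go astray is the ``Main obstacle'' paragraph. The theorem states that the sections $s_i$ \emph{induce} bijections; it does not claim these bijections are canonical, and indeed they cannot be, since $\Mmot$, $\Mtop$, $\Mcpl$ themselves depend on the $s_i$ (the remark immediately after the theorem says exactly this, and presents the canonicity of the log-spaces as the payoff, not as something the bijection must respect). There is no ``cancellation of section-dependence'' to establish: replacing $s_i$ by $u_is_i$ changes the embedding $M\times\Rp^I\hookrightarrow\prod_i(\Sph(L_i\oplus\oneplus)\times\Ro)$ and hence changes $\Mmot$ as a subset, and the bijection moves with it. What does need checking --- and what the paper checks via \eqref{eqn:computation-wi} --- is that the formula is independent of the auxiliary local trivialisations $\ell_i$, not of the $s_i$. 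Your other worry, gluing across strata, is also not really a separate step: the formula is written at each $x$ in terms of $J(x)$, and once one knows the topology on $\Mlog$ agrees with the oriented-blow-up topology (Remark~\ref{rmk:top-on-log}, essentially \cite{KN99}), no further gluing argument is required. So the core of your plan is correct; just drop the canonicity check and replace it by the $\ell_i$-independence check.
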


\begin{rmk}
The spaces $\Mcpl$, $\Mtop$ and $\Mmot$, as defined in Definition \ref{defi:Mtop}, depend on the choice of the sections $s_i, i\in I$, whereas this is not the case for the corresponding log-spaces. Therefore the above bijections allow us to get rid of this dependence.

Moreover, these natural bijections can be used to induce geometric and algebraic structures on the log-spaces from the ones on $\Mcpl$, $\Mmot$, $\Mtop$ (see Remark \ref{rmk:top-on-log}).
\end{rmk}

\begin{proof}[Proof of Theorem \ref{thm:logspaces}]
We first construct a bijection between $\Mmlogi\restrDi$ and ${L_i^*}_{|D_i}$. For this, we construct a vector bundle isomorphism between $N_i$ and ${L_i}_{|D_i}$ (which depends on the choice of $s_i$); see \S\ref{ss:normalls} for the notation.

Fix $x\in D_i$ and consider the commutative diagram
\begin{equation}\label{eqn:comm_diagr_s}
\begin{gathered}
\begin{tikzcd}
0 \arrow[r] & T_xD_i \arrow[d] \arrow[r] & T_xM \arrow[d, "T_xs_i"] \arrow[r] & (N_i)_x \arrow[d, "(s_i')_x"] \arrow[r]  & 0 \\
0 \arrow[r] & T_xM \arrow[r, "\text{zero}"] & T_{s_i(x)}L_i \arrow[r] & T_0(L_i)_x=(L_i)_x \arrow[r] & 0
\end{tikzcd}
\end{gathered}
\end{equation}
The middle vertical arrow represents the differential of $s_i:M\to L_i$ at $x$. It induces the vertical isomorphism $(s_i')_x$ on the right-hand side. Then $(s_i')_x(v)=D_xs_i(v)=\partial_vs_i(x)$ is the directional derivative of $s_i$ along $v$ at $x$.
Composing $s_i':N_i^*\to{L_i^*}_{|D_i}$ with the bijection of Lemma \ref{lem:log-and-normal},
we get a bijection $$\Mmlogi\restrDi\ni(x,\Phi )\leftrightarrow w_i\in{L_i^*}_{|D_i}.$$

We now express the above bijection directly, without using the normal bundle. For this let us fix a non-vanishing local section $\ell_i$ of $L_i$.  Then the above isomorphism is explicitely given by $w_i=\Phi \left(\frac{s_i}{\ell_i}\right)\ell_i$. Indeed, write $s_i=g_i\ell_i$ and then, since $g(x)=0$ if $x\in D_i$,
\begin{align}\label{eqn:computation-wi}
w_i=\partial_{v_i}s_i(x)=\partial_{v_i}g(x)\cdot\ell_i+g(x)\cdot\partial_{v_i}\ell_i(x)=\Phi(g_i)\cdot\ell_i+0\cdot\partial_{v_i}\ell_i(x).
\end{align}

Given $x\in D$, recall that $J(x)\coloneq\{i\in I\,:\,x\in D_i\}$ and define the bijection $$\begin{array}{cccl}\Mmlog&\to & \ \Mmot \hfill \\(x,\Phi)&\mapsto&\left(x,\left(w_j=\Phi\left(\frac{s_j}{\ell_j}\right)\ell_j(x)\right)_{j\in J(x)},(s_i(x))_{i\in I\setminus J(x)}\right)\end{array}.$$
Note that the above formula depends on the sections $s_i, i\in J(x)$, but not on the sections $\ell_i$.

By taking the circle bundles we obtain the corresponding bijection for the log-space
$$\begin{array}{cccl}\Mlog &\to & \ \Mtop  \hfill \\(x,\varphi)&\mapsto&\left(x,\left(\theta_j=\varphi\left(\frac{s_j}{\ell_j}\right)\theta(\ell_j(x))\right)_{j\in J(x)},\theta (s_i(x))_{i\in I\setminus J(x)}\right) \end{array}$$
where $\theta(v)\coloneq v\mod\Rp$.

Combining both formulae from above, we get the following bijection
$$\begin{array}{cccl}
\Mclog &\to & \ \Mcpl \hfill \\(x,\varphi,\psi)&\mapsto&\left(x,\left(r_j\theta_j=\psi\left(\frac{s_j}{\ell_j}\right)r(\ell_j(x))\varphi\left(\frac{s_j}{\ell_j}\right)\theta(\ell_j(x))\right)_{j\in J(x)},  \left(\theta (s_i(x)), r(s_i(x))\right)_{i\in I\setminus J(x)} \right)\in\sbar{L_{J(x)}^*}.
\end{array}$$
where $r(v)\coloneq v\mod S^1$.
\end{proof}

\begin{rmk}\label{rmk:top-on-log}
There is a natural topology on the log-spaces; see \cite{KN99}. For $\Mlog$, it coincides with the topology induced by the oriented blowing-up, so that the bijection in Theorem \ref{thm:logspaces} is a homeomorphism. In the present paper, we assume that $\Mlog$, $\Mclog$ and $\Mmlog$ are equipped with the topologies induced by the above bijections.
\end{rmk}

Using the functoriality of $\Mmlog$ applied to $f:(M,D)\to(\C,0)$  we obtain the first vertical map of \eqref{eqn:NiceDiagram}
$$f^{\mlog}\restrD:\Mmlog\restrD \to\mathbb C^*,$$
and the identification $\Mmlog\simeq\Mmot$ from Theorem \ref{thm:logspaces} then induces
$$f^{\mot}\restrD: \Mmot\restrD \to (\mathbb C,0)^\mot_{|0}=\mathbb C^*.$$

Composing the bijections in Lemma \ref{lem:LJ} and Theorem \ref{thm:logspaces}, for $\varnothing\neq J\subset I$, we obtain the bijection $\Theta_J:\Mmlog\restrDo\to L_J^\star$ defined by $\Theta_J(x,\Phi)=\left(x,\left(\Phi\left(\frac{s_j}{\ell_j}\right)\ell_j(x)\right)_{j\in J}\right)$.

\begin{prop}\label{prop:reduction}
$f^{\mlog}\restrDo = f_J\circ\Theta_J$, where $f_J$ is defined in \eqref{def:f_J}.
\end{prop}
\begin{proof}
For $(x,\Phi) \in \Mmlog\restrDo$,
\begin{align*}
f^{\mlog}(x,\Phi) & = \Phi(f_x)
=  \Phi\left(  u \prod_{i\in I}
\left (\frac{s_i}{\ell_i } \right)^{N_i}  \prod_{i\in I} \ell_i^{N_i}\right)\\
& = u(x) \prod_{i\notin J} \left( \frac{s_i(x)}{\ell_i (x)} \right)^{N_i}  \Phi\left( 
\prod_{j\in J} \left (\frac{s_j}{\ell_j } \right)^{N_j} \right) \prod_{i\in I} \ell_i(x)^{N_i}\\
& = \left (u(x) \prod_{i\notin J} s_i(x)^{N_i} \right ) \prod_{j\in J} \left (\Phi\left( 
\frac{s_j}{\ell_j } \right) {\ell_j(x) }\right)^{N_j} = f_J\left(\Theta_J(x,\Phi)\right).
\end{align*}
\end{proof}

\begin{defi}\label{def:reduction}
In Theorem \ref{thm:A}, by the reduction of $f^\mlog\restrD$ we mean the element of $\KDC$ defined by $$-\sum_{\varnothing\neq J\subset I}(-1)^{|J|}\left[(\pr_M,f^\mlog\restrD):\C^*\acts\Mmlog\restrDo\to D\times\C^*\right].$$
\end{defi}

We derive from Proposition \ref{prop:reduction} that this reduction coincides with the motivic Milnor fibre $\Sf$; cf. Definition \ref{def:Milnorfibre}.
\begin{cor}\label{cor:reduction}
$\displaystyle \Sf=-\sum_{\varnothing\neq J\subset I}(-1)^{|J|}\left[(\pr_M,f^\mlog\restrD):\C^*\acts\Mmlog\restrDo\to D\times\C^*\right]$.
\end{cor}

Similarly, the functoriality of $\Mlog$ gives rise to a map $\Mlog\restrD\to(\mathbb C,0)^\log_{|0}=S^1$. Then, using the identification $\Mlog\simeq\Mtop$, we obtain
\begin{equation}\label{eqn:TMF}
\sign f:\Mtop_{|D}\to S^1.
\end{equation}
We will see in Proposition \ref{prop:AC} that $\sign f$ coincides with A'Campo's first model of the topological Milnor fibration. \\

We can now prove Theorem \ref{thm:A}, as stated in the introduction.

\begin{proof}[Proof of Theorem \ref{thm:A}]
The fact that the map $f^\log\restrD:\Mlog\restrD\to S^1$ coincides with A'Campo's first model of the topological Milnor fibration will be proved in Section \ref{sec:acampo}, see Proposition \ref{prop:AC}.

We have shown in Corollary \ref{cor:reduction} that the reduction of $f^\mlog\restrD$ coincides with the motivic Milnor fibre $\Sf$.

By definition of the log-spaces, we obtain $f^\log\restrD$ by dividing  $f^\mlog\restrD$ over each stratum $\Do_J$ by $(\Rp)^{J}$ in the source and $\Rp$ in the target.
\end{proof}

\begin{rmk}\label{rmk:speculations}
The signs appearing in the motivic Milnor fibre $\Sf$ (see Definition \ref{def:Milnorfibre}) can be interpreted in terms of geometric quotients by powers of $\Rp$.

Since $\Mtop\restrDo$ is the geometric quotient of $\Mmot\restrDo$ by $\Rp^{|J|}$, the coefficient $(-1)^{|J|}$ appearing in $\Sf$ in front of the class of $L_J^\star$ can be interpreted as a consequence of this fact as $\chi_c(\Rp^{|J|})=(-1)^{|J|}$. We conjecture that this interpretation can be made formal by introducing a Grothendieck group handling these quotients by $\Rp$ (importantly, by taking such a quotient, we leave the category of algebraic varieties).

The additional sign in front of the sum for $\Sf$ in Definition \ref{def:Milnorfibre} is necessary to compare the motivic Milnor fibre and the topological Milnor fibration. Indeed, we need to take the quotient of $\C^*$ by $\Rp$ in order to recover $S^1$ in the target of the topological Milnor fibration.

Besides, as explained in Remark \ref{rmk:speculation2}, these coefficients are also necessary to ensure the invariance with respect to the choice of a resolution (see also Examples \ref{ex:bu} and \ref{ex:bu2}).
\end{rmk}

\subsection{Recovering the function $f$ and the motive $\Sf$}\label{ss:strat}
Recall that $f^\mot:\Mmot_{|D}\to\C^*$ induces maps $f_J:L_J^\star\to\mathbb C^*$ for $\varnothing\neq J\subset I$ (see \eqref{def:f_J}) from which we derive the motivic Milnor fibre $\Sf$ (see Definition \ref{def:Milnorfibre}). Similarly, we deduce from \eqref{eqn:TMF} that $\sign f$ induces an ${(S^1)}^{J}$-equivariant function
\begin{align}\label{eq:signf_J}\sign f_J:\Mtop\restrDo\to S^1, \quad \sign f_J(x,\theta)=(\sign u_J(x))\prod\theta_i^{N_i},\end{align}
where $\varnothing\neq J\subset I$. \\

Actually, we show that we can essentially recover $f$ from these maps, as explained below. For this statement, it is convenient to use the following evaluation map $$\ev_f:\Mmlog\to \C^*$$ given for $\Phi\in\Mmlog_x$ by $\ev_f(\Phi)=\Phi(f_x)$. Note that $(\ev_f)\restrD$ coincides with $f^{\mlog}\restrD:\Mmlog\restrD \to(\mathbb C,0)^\mlog_{|0}=\mathbb C^*$.

\begin{thm}\label{thm:Recover}
Given a manifold $M$ and a divisor $D=\cup D_i$ with simple normal crossings.  Fix the line bundles $L_i$ and the sections $s_i$ as in the set-up, see \S\ref{ss:setup}. 

Then the map $\ev_f:\Mmlog\to\C^*$ uniquely determines the function $f$, and its restriction $f_J^\mlog:\Mmlog\restrDo\to\C^*$ determines $f_J:L_J^\star\to\mathbb C^*$.

Similarly, $f^\log:\Mlog\to S^1$ determines $f$ up to the multiplication by a (locally constant) positive real number and each of the maps $f^\log\restrDo:\Mlog\restrDo\to S^1$ determines $f_J:L_J^\star\to\mathbb C^*$ also up to multiplication by a (locally constant) positive real number.
\end{thm}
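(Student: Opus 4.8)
The plan is to reconstruct $f$ from $\ev_f$ stalkwise, using the exact sequence \eqref{eq:logexactsequence} and the explicit description of $\Mmlog_x$ as $\Hom_{\mon}(\M_x,\C^*)$ with the normalisation $\Phi(g)=g(x)$ for $g\in\OO_x^*$. Fix $x\in M$ and recall $J(x)=\{i\in I : x\in D_i\}$. The key point is that the germ $f_x\in\M_x$ is the ``universal'' element in the sense that its image in $\N^{J(x)}$ under the quotient of \eqref{eq:logexactsequence} is exactly $(N_i)_{i\in J(x)}$, and $f_x = u_x\prod_{i\in J(x)} s_{i,x}^{N_i}$ with $u_x\in\OO_x^*$. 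First I would observe that for $\Phi\in\Mmlog_x$ the value $\ev_f(\Phi)=\Phi(f_x)=u(x)\prod_{i\in J(x)}\Phi(s_{i,x})^{N_i}$, and that the map $\Phi\mapsto(\Phi(s_{i,x}))_{i\in J(x)}$ identifies $\Mmlog_x$ with $(\C^*)^{J(x)}$ (this is essentially Lemma \ref{lem:log-and-normal} and Corollary \ref{cor:log-and-normal}). Hence knowing $\ev_f$ on the whole fibre $\Mmlog_x$ is knowing the monomial function $(z_i)_{i\in J(x)}\mapsto u(x)\prod z_i^{N_i}$ on $(\C^*)^{J(x)}$. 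From such a monomial function one recovers the exponents $N_i$ (they are the partial degrees, read off e.g. by restricting to one-parameter subgroups $z_i=\lambda$, $z_j=1$ for $j\ne i$, and taking the order in $\lambda$) and the coefficient $u(x)$ (evaluate at the identity). Letting $x$ vary over $M\setminus D$, where $\Mmlog$ is canonically $M\setminus D$ and $\ev_f$ is literally $f$, already determines $f$ on the dense open $M\setminus D$; since $f$ is regular this determines $f$ everywhere. But in fact the stalkwise argument gives more: it recovers the multiplicities $N_i$ along each $D_i$ and the unit $u$ along $D$, i.e.\ the factorisation \eqref{eqn:globalformula}, not merely $f|_{M\setminus D}$.

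Next I would deduce the statement about $f_J$. By Lemma \ref{lem:LJ} and the paragraph following it, $f_J^\mlog=\ev_f|_{\Mmlog_{|\Do_J}}$ corresponds to $f_J:L_J^\star\to\C^*$ under $\Mmlog_{|\Do_J}\simeq\Mmot_{|\Do_J}\simeq L_J^\star$, and by \eqref{def:f_J} we have $f_J(x,(v_i)_{i\in J})=u_J(x)\prod_{i\in J}v_i^{N_i}$ with $u_J$ nowhere vanishing. So determining $f_J$ from $f_J^\mlog$ is the identical monomial-recovery argument carried out fibrewise over $\Do_J$: over each $x\in\Do_J$ the map is $(v_i)_{i\in J}\mapsto u_J(x)\prod v_i^{N_i}$ on $(\C^*)^J$, from which $u_J(x)$ and the $N_i$, $i\in J$, are recovered exactly as above. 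No resolution or global argument is needed here beyond the identifications already established in \S\ref{ss:loggeo}.

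For the $\sign f$ / $\Mlog$ part, the plan is to run the same argument after dividing by $\Rp$, i.e.\ replacing $\C^*$ by $S^1$ and $\Mmlog$ by $\Mlog$. Concretely, $\sign f$ on the fibre $\Mlog_x\simeq(S^1)^{J(x)}$ is the map $(\theta_i)_{i\in J(x)}\mapsto\frac{u(x)}{|u(x)|}\prod\theta_i^{N_i}$ (cf.\ \eqref{eq:signf_J} for the restriction to $\Do_J$). From this one recovers the exponents $N_i$ exactly as before — they are now the ``winding numbers'' in each circle variable, i.e.\ the degrees of the characters $\theta_i\mapsto\theta_i^{N_i}$ — and one recovers $\frac{u(x)}{|u(x)|}=\sign u(x)$ by evaluating at $\theta_i=1$. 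Thus $\sign f$ determines the multiplicities $N_i$ and the argument of $u$; it cannot determine $|u|$, which is precisely the asserted indeterminacy ``up to multiplication by a locally constant positive real number'' (locally constant because only the constant $|u(x)|$ at a single point matters fibrewise, and connectedness of strata fixes the ambiguity to a locally constant factor on $M$). The corresponding statement for $\sign f_J$ follows by the same fibrewise reading over $\Do_J$, using \eqref{eq:signf_J} and the identification $\Mlog_{|\Do_J}\simeq\Sph^\star(N_J)\simeq\Sph^\star(L_J)$.

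The only mildly delicate point — and the one I would be most careful about — is the recovery of the integers $N_i$ from a single monomial map $(\C^*)^{J}\to\C^*$ or $(S^1)^{J}\to S^1$: one must note that the map sees each $N_i$ as a genuine integer (not just its class modulo something) because it is defined on all of $(\C^*)^J$, resp.\ $(S^1)^J$, and that a monomial map determines its exponent vector uniquely (this uses $N_i>0$, which holds since the $D_i$ are the components of $D=f^{-1}(0)$). Everything else is bookkeeping with the identifications of Theorem \ref{thm:logspaces} and Lemma \ref{lem:log-and-normal}.
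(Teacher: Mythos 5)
Your overall approach is the same as the paper's: recover the exponents $N_i$ fibrewise from the monomial structure of $\ev_f$ (resp.\ $\sign f$), then recover the unit $u$ (resp.\ its argument). The density argument over $M\setminus D$ is a clean alternative way to see the first claim. However, there is a genuine gap in your treatment of the $\sign f$ case, at exactly the point where the statement is nontrivial.

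You write that $\sign f$ ``cannot determine $|u|$, which is precisely the asserted indeterminacy,'' and then justify the ``locally constant'' qualifier by appealing to the fact that a single constant matters fibrewise and to connectedness of the strata. This does not work. What your fibrewise argument shows is that $\sign f$ determines the $N_i$ and the function $x\mapsto \sign u_J(x)$; a priori the remaining ambiguity is an arbitrary positive-real-valued \emph{function} of $x$, not a locally constant one, and connectedness alone cannot improve that. The theorem claims more: that the ambiguity is only by a locally constant positive real. The missing ingredient is the rigidity of regular (holomorphic) functions. If $u_1$ and $u_2$ are two regular nowhere-vanishing functions with $\sign u_1 = \sign u_2$, then $u_1/u_2$ is a regular function taking only positive real values; a holomorphic function that is real-valued is locally constant (e.g.\ by the Cauchy--Riemann equations, or because a nonconstant holomorphic map is open). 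This is exactly the step the paper supplies and your proposal omits. Once it is added, your proof is complete and matches the paper's.

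A minor point: the identification $\Phi\mapsto(\Phi(s_{i,x}))_{i\in J(x)}$ should really read $\Phi\mapsto(\Phi(s_{i,x}/\ell_{i,x}))_{i\in J(x)}$ after choosing local trivialising sections $\ell_i$, since $\M_x$ consists of germs of functions, not of sections of $L_i$; this is harmless but worth being precise about, as it is the source of the (non-canonical) dependence on the $s_i$ that the theorem implicitly keeps track of.
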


\begin{proof}
Clearly $\ev_f$ determines the exponents $N_i$ and therefore it is enough to determine the unit $u$ in \eqref{eqn:globalformula}. If $\Phi\in\Mmlog_x$ this follows from $\Phi(u)=u(x)$.

The above argument applies to $f^\log$ because for $\varphi \in\MD^\log_x$, $f^\log(x,\varphi) = \varphi(f_x)$. More precisely, it allows us to recover the exponents $N_i$ from $f^\log\restrDo$ and then the argument of $u_J$, $\sign u_J(x)=\frac{u_J(x)}{|u_J(x)|}$. Finally, the argument of $u_J$ determines $u_J$ up to a positive real constant. Indeed, the quotient of two such possible $u_J$ is a holomorphic function that takes only real values. So it is locally constant.
\end{proof}

\begin{proof}[Proof of Theorem \ref{thm:B}]
Note $\Xi:\Mlog\to\Mtop$ the bijection from Theorem \ref{thm:logspaces}. Then $f^\log\restrDo=\sign f_J\circ\Xi$. Therefore $\sign f_J$ determines $f^\log\restrDo$, which determines $f_J$ up to a multiplication by a (locally constant) positive real number by Theorem \ref{thm:Recover}.

Note that $\left[(\pi_J,f_J):\C^*\acts L_J^\star\to D\times\C^*\right]\in\KDC$ does not change if we multiply $f_J$ by a (locally constant) positive real number. Therefore $\sign f_J$ determines $\Sf$ (see \eqref{eqn:Sf}).
\end{proof}

\section{The topological Milnor fibration}\label{sec:acampo}
In \cite{AC75}, N. A'Campo gives two geometric models of the topological Milnor fibration. We recall below the main idea of his constructions and interpret them in terms of our construction. 

Let $f:M\to \{z\in \C\,:\,|z|\le 1\}$ be a complex analytic function with values in the unit disc in $\C$, and assume that the zero set of $f$ is a hypersurface with simple normal crossings $D\coloneq f^{-1}(0)=\cup D_i$.
A'Campo constructs a first model $nf:N\to S^1$ (denoted $nP:N\to S^1$ in \cite{AC75}) of the topological Milnor fibration of $f$, with $N$ a differentiable manifold with corners that is homeomorphic to the boundary of a regular neighborhood of the zero set of $f$ in $M$. When $D$ is irreducible, this construction is simply the real oriented blowing-up of $M$ along $D$. For general $D$, he takes the fibre product of the real oriented blowings-up along the components $D_i$.  Therefore, by construction, the fibre of $nf:N\to S^1$ above a smooth point of $f^{-1}(0)$  is a circle, whereas above a point in the intersection of $k$ divisors it is the product of $k$ circles.

Above each stratum $\Do_J$ separately, this construction describes nicely the fibration together with the monodromy. However these models of the monodromy above the different strata do not glue together. For this reason, A'Campo constructs a second model of the Milnor fibration by thickening the corners of $N$, giving more flexibility to interpolate the monodromy on the pieces into a global geometric monodromy.

A'Campo’s first model coincides with the map defined in \eqref{eqn:TMF}. The second one, which we interpret now in terms of our construction, comes with a simple formula for the geometric monodromy operator. \\

Let us now go back to the set-up from the introduction (see \S\ref{ss:setup}). For $\varnothing\neq J\subset I$, recall that $\sign f$ defined in \eqref{eqn:TMF} induces an ${(S^1)}^{J}$-equivariant function (see \eqref{eq:signf_J})
$$\sign f_J:\Mtop\restrDo\to S^1, \quad \sign f_J(x,\theta)=(\sign u_J(x))\prod\theta_i^{N_i}.$$
Recall also that $\Mtop\restrDo$ can be identified with $L_J^\star/(\Rp)^{J}$ so that $\Mtop\restrDo$ is an ${(S^1)}^{J}$-principal bundle over $\Do_J$.

\begin{prop}\label{prop:AC}
The continuous map $\sign f:\Mtop_{|D}\to S^1$ coincides with A'Campo's first model of the topological Milnor fibration.
\end{prop}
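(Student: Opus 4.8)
The plan is to reduce the statement to A'Campo's theorem \cite{AC75}. By Theorem~\ref{thm:A}, together with the identification $\Mtop\simeq\Mlog$ of Theorem~\ref{thm:logspaces}, the map $\sign f\colon\Mtop_{|D}\to S^1$ of \eqref{eqn:TMF} is exactly the fibre product of the real oriented blowings-up of the components $D_i$ of $D$ endowed with the map induced by $f/|f|$, i.e.\ A'Campo's first model $nP\colon N\to S^1$. So it remains to see, in the present language, that this model is isomorphic to the (monodromy, hence topological) Milnor fibration of $f$ near $D$. All of this is local over $M$, and, since the total space of the Milnor fibration is compact, one first restricts $M$ to a relatively compact neighbourhood of the relevant (compact) piece of $D$, which does not change the fibration; in the local situation $f\colon(\C^n,0)\to(\C,0)$ resolved to $f\colon(M,D)\to(\C,0)$ this amounts to taking the preimage of a small ball.

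For the local picture, around a point of a stratum $\Do_J$ choose coordinates $(z,w)$ with $D_i=\{z_i=0\}$ for $i\in J$, so that $f=u\prod_{i\in J}z_i^{N_i}$ with $u$ a unit, cf.\ \eqref{eqn:globalformula}. Passing to polar coordinates $z_i=r_i\theta_i\in\Ro\times S^1$ in the real oriented blowing-up gives $\sign f=(u/|u|)\prod_{i\in J}\theta_i^{N_i}$ and $|f|=|u|\prod_{i\in J}r_i^{N_i}$, which shows that $\sign f$ extends continuously to $\{r=0\}=\Mtop_{|D}$ and recovers \eqref{eq:signf_J}. Thus $\Mtop_{|D}$ is a compact manifold with corners on which $\sign f$ is a submersion, also on each boundary face: on $\Mtop\restrDo\cong L_J^\star/(\Rp)^{J}$ it is, up to the twist by the unit $u_J$, the composition of the $(S^1)^{J}$-torsor projection with the surjective group homomorphism $(S^1)^{J}\to S^1$, $\theta\mapsto\prod\theta_i^{N_i}$. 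A suitable version of Ehresmann's fibration theorem for manifolds with corners then shows that $\sign f\colon\Mtop_{|D}\to S^1$ is a locally trivial fibration.

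It remains to identify this fibration with the Milnor fibration. Writing $g=|f|$, the function $\log g$ is a submersion on a punctured neighbourhood of $D$; choosing a metric and a gradient-like vector field $v$ with $d(\log g)(v)\equiv1$ that is moreover tangent to the fibres of $\sign f$ (possible by the displayed formulas, since $\arg f$ and $\log|f|$ have independent differentials there), its flow trivialises $g$ over $(0,\delta]$ and, thanks to the same formulas, extends to the real oriented blowing-up with $\Mtop_{|D}=\{r=0\}$ as limit set as $\log g\to-\infty$. This produces a homeomorphism $\{g=\delta\}\xrightarrow{\ \sim\ }\Mtop_{|D}$ intertwining $f/|f|$ with $\sign f$; combined with Lê's equivalence \cite{Le77} between $f\colon\{g=\delta\}\to\partial D_\delta$ and the Milnor fibration, this gives the claim. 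The technical heart — and the place where one follows \cite{AC75} most closely — is the analysis of the flow of $\log g=\log|u|+\sum_{i\in J}N_i\log r_i$ along the deep strata $\Do_J$ with $|J|\ge2$ and near the corners of $\Mtop_{|D}$: one must choose $v$ so that the flow extends continuously up to $\{r=0\}$ and so that the limit map is a homeomorphism rather than merely a continuous surjection. The global statement then follows from the local one by a partition of unity on the relatively compact neighbourhood of $D$, properness of $g$ there controlling the flow.
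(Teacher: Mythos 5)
The paper proves this proposition entirely by citation: the sentence immediately preceding it identifies $\sign f\colon\Mtop_{|D}\to S^1$ with A'Campo's first model $nP\colon N\to S^1$, which is essentially a tautology given Definition~\ref{defi:Mtop} and \S\ref{ss:rog} ($\Mtop$ is \emph{by construction} the fibre product of the real oriented blowings-up of $M$ along the $D_i$), and the equivalence of that model with the Milnor fibration is then taken from \cite{AC75} and \cite{KN99}, with \cite{Par98} and \cite{Kaw02} given as further references. Your opening move reaches the same identification, but by invoking Theorem~\ref{thm:A}; note that Theorem~\ref{thm:A}'s first assertion is precisely the content of this proposition (modulo $\Mlog\simeq\Mtop$) and its proof at the end of Section~\ref{sec:cpl} explicitly defers to Section~\ref{sec:acampo}, so this creates a small circularity. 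You should get the identification with A'Campo's $N\to S^1$ directly from \eqref{eqn:Mlog} and the description of $\Mtopi$ as the real oriented blowing-up of $M$ along $D_i$, without routing through Theorem~\ref{thm:A}.

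Beyond that, your proposal does substantially more than the paper: instead of citing the literature you sketch a proof of A'Campo's theorem itself, via a stratified Ehresmann argument for local triviality and a flow comparison (a gradient-like vector field for $\log|f|$ tangent to the fibres of $\sign f$, flowed to the boundary $\{r=0\}$ and then matched with the monodromy fibration and Lê's theorem \cite{Le77}). This is a sound and classical strategy — it is essentially the approach of \cite{Par98}, which the paper references right after the proposition — so as a route it is legitimate and would make the statement self-contained. You also correctly identify where the real work lies: constructing $v$ so that its flow extends continuously to the corners of $\Mtop_{|D}$ and so that the time-$(-\infty)$ limit map is a homeomorphism, not merely a continuous surjection; near a deep stratum $\Do_J$ with $|J|\geq2$ the several $r_i$ decay at rates coupled through $\sum_i N_i\log r_i$, and the phrase ``extends to the real oriented blowing-up'' conceals exactly this analysis. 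As a blind attempt the outline is correct, but it re-derives a cited theorem rather than matching the paper's one-line proof, and you should be aware that the corner analysis you defer is the entire substance of what \cite{AC75}/\cite{Par98} actually prove.
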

\begin{proof}
When $D$ is irreducible, we defined $\Mtop$ as the real oriented blowing-up of $M$ along $D$, see \eqref{eqn:MtopDi}. When $D$ has several components $D=\cup D_i$, we defined $\Mtop$ as the fibre product of the $(M,D_i)^\top$, see \eqref{eqn:Mlog}. Therefore $\Mtop$ coincides with the total space of A'Campo's first model.

Then $\sign f_J:\Mtop\restrDo\to S^1$, defined by $\sign f_J(x,\theta)=(\sign u_J(x))\prod\theta_i^{N_i}$, coincides with A'Campo's first model of the topological Milnor fibration using polar coordinates, see \cite[p239]{AC75}.
\end{proof}

The map $\sign f:\Mtop_{|D}\to S^1$ is a proper stratified submersion and, therefore, a locally trivial fibration. Over each stratum (see \eqref{eq:signf_J}) $\sign f_J$ can be equipped with a monodromy map given by a simple formula.  These monodromy maps do not glue together to a 
continuous monodromy map for $\sign f:\Mtop_{|D}\to S^1$. This is why A'Campo improves his model by adjoining additional simplices at corners, allowing him to define a geometric monodromy for $\sign f:\Mtop_{|D}\to S^1$. It is this construction that we are going to recover using $\sign f:\Mcpl\restrD\to S^1$.

Along the way, we give two interpretations of A'Campo's second model; see Propositions \ref{prop:AC1} and \ref{prop:AC2}. First, we represent A'Campo's model as a subspace of $\Mcpl$. Second, we show that this model can be obtained from $\Mcpl\setminus\Mtop$ by dividing by the diagonal $\Rp$-action. 

For $i\in I$ and $J\subset I$ consider the function
$$\xi_i:L_J^\star\to\R, \quad
\xi_i(x,v) =
\begin{cases}
  (|v_i|N_i)^{-1} & \text{ if } i\in J \\
  0 & \text{ if } i\not \in J
\end{cases}
$$
These functions can be extended continuously from $L_J^\star$ to $\Mcpl\restrDo$ by giving to $\xi_i$ the value $0$ if $v_i$ goes to infinity. 
For $i\in I$, the above functions glue together to a continuous function $\xi_i:\Mcpl\restrD\to\R$. Then we define $\xi:\Mcpl\restrD \to\R^I$ by $\xi=(\xi_i)_{i\in I}$. 

Let $\Delta=\left\{\xi\in\R^I\,:\,\xi\ge0,\,\sum\xi_i=1\right\}$ be the standard simplex in $\R^I$. Then the restriction of $\sign f$ to $\xi^{-1}(\Delta)$ coincides with A'Campo's second model. Indeed, the process of adjoining simplices to the normal crossing divisor $D$ in \cite{AC75} gives exactly the topological space $\xi^{-1}(\Delta) /(S^1)^I$. Besides, we have $$\xi^{-1}(\Delta)=\left(\xi^{-1}(\Delta)/(S^1)^I\right)\times_D\Mtop.$$ In particular, $\xi^{-1}(\Delta)$ is homeomorphic to A'Campo's second model (named $\tilde N$ in \cite{AC75}). 

Then we construct the associated geometric monodromy as follows. For $\lambda\in\R$, we define $h_{\lambda,J}:L_J^\star\to L_J^\star$ by 
$$h_{\lambda,J}\left(x,v \right)=\left(x,\left(  \exp\left(\lambda \xi_i(x,v) 2\pi \sqrt{-1}\right) v_i \right)_{i\in J}\right).$$
Then, the maps $h_{\lambda,J}$ can be extended to the boundary of $L_J^\star$ in $\Mcpl\restrD$ so that they glue to a continuous map $h_\lambda:\Mcpl\restrD\to\Mcpl\restrD$.

The map $h_{\lambda}: \xi^{-1}(\Delta) \to \xi^{-1}(\Delta)$ is well-defined and satisfies
$$\sign f\left(h_{\lambda}(x,v)\right)=\exp\left(\lambda  2\pi \sqrt {-1}\right) \sign f(x,v).$$

In particular, the following result holds:

\begin{prop}\label{prop:AC1}
$\sign f:\xi^{-1}(\Delta)\to S^1$ coincides with A'Campo's second model of the Milnor fibration.
\end{prop}

Let us give a different interpretation of this construction. 
For $\varnothing\neq J\subset K$, we define an $\Rp$-action on
$$\left(\fprod{\Do_K}{i\in J} {L_i^*}_{|\Do_K}\right) \times_{\Do_K} \left(\fprod{\Do_K}{i\in K\setminus J}\Sph\left({L_i}_{|\Do_K}\right)\right) \subset \Sph_{KJ},$$
by $$\lambda\cdot\left((v_i)_{i\in J}, (\theta_j)_{j\in K\setminus J}\right) = \left((\lambda v_i)_{i\in J}, (\theta_j)_{j\in K\setminus J}\right).$$
These actions glue to an $\Rp$-action on $\Mcpl\setminus\Mtop$.  The above construction leads to the following result.

\begin{prop}\label{prop:AC2}
The quotient $\left(\Mcpl\setminus\Mtop\right)/\Rp$ is homeomorphic to $\xi^{-1}(\Delta)$ and the map to $S^1$ induced on this quotient equals the restriction of $\sign f$ to $\xi^{-1}(\Delta)$.
\end{prop}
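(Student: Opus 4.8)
The plan is to exhibit an explicit homeomorphism between $\left(\Mcpl\setminus\Mtop\right)/\Rp$ and $\xi^{-1}(\Delta)$ that is compatible with the maps to $S^1$, working stratum by stratum and then checking that the pieces glue. The key observation is that on the stratum over $\Do_J$ (with $\varnothing\neq J\subset I$), $\Mcpl\restrDo$ minus its part at infinity is $\sbar{L_J^\star}\setminus\Sph_{JJ}$, which contains $L_J^\star$ as an open dense piece together with the mixed strata where some coordinates are at infinity; the $\Rp$-action defined just before the statement scales all the \emph{finite} components $v_i$, $i\in J$, simultaneously and fixes the $S^1$-components. So the first step is to show that each such orbit meets $\xi^{-1}(\Delta)$ in exactly one point. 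On $L_J^\star$ this is immediate: for $(x,v)$ with all $v_i\neq 0$, the quantity $\sigma(x,v)\coloneq\sum_{i\in J}(|v_i|N_i)^{-1}$ is positive, strictly decreasing in the scaling parameter $\lambda$ (since $|\lambda v_i|^{-1}=\lambda^{-1}|v_i|^{-1}$, so $\sigma(\lambda\cdot(x,v))=\lambda^{-1}\sigma(x,v)$), hence there is a unique $\lambda>0$ with $\sigma(\lambda\cdot(x,v))=1$, namely $\lambda=\sigma(x,v)$; and on the mixed strata, where $v_i=\infty$ for $i$ in some nonempty proper subset, the same formula works with $\xi_i=0$ for those $i$, and the remaining sum is still positive because $J\setminus\{i:v_i=\infty\}\neq\varnothing$ on $\Mcpl\setminus\Mtop$ (this is precisely the condition that we are not entirely at infinity). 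Thus $\xi^{-1}(\Delta)$ is a global slice for the $\Rp$-action on $\Mcpl\setminus\Mtop$.

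The second step is to upgrade "set-theoretic slice" to "homeomorphism of quotients." The retraction $\Mcpl\setminus\Mtop\to\xi^{-1}(\Delta)$ sending a point to the unique point of its orbit in $\xi^{-1}(\Delta)$ is given stratawise by $(x,v)\mapsto\sigma(x,v)\cdot(x,v)$ (extended by the analogous formula on mixed strata), and $\sigma$ is continuous on all of $\Mcpl\setminus\Mtop$ by the very construction of the extended functions $\xi_i$ — the paper already notes they glue to continuous functions $\xi_i:\Mcpl\restrD\to\R$, so $\sigma=\sum_{i}\xi_i^{-1}$ wait, rather $\sigma$ is $1/(\text{value at which }\sum\xi_i=1)$; more cleanly, $\sigma(x,v)$ equals the value of $\sum_{i\in I}\xi_i$ at $(x,v)$ before normalization, which is exactly $\sum_i\xi_i(x,v)$ computed with the unnormalized $v$, and this is continuous and strictly positive away from $\Mtop$. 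Hence the retraction is continuous, it is $\Rp$-invariant, and it descends to a continuous inverse to the inclusion $\xi^{-1}(\Delta)\hookrightarrow\Mcpl\setminus\Mtop$ followed by the quotient map; since $\xi^{-1}(\Delta)$ is closed in $\Mcpl\setminus\Mtop$ and the action is free and proper there (each orbit is a copy of $\Rp$ meeting the slice once, transversally), we conclude $\left(\Mcpl\setminus\Mtop\right)/\Rp\cong\xi^{-1}(\Delta)$ as topological spaces.

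The third step, compatibility with the maps to $S^1$, is the easiest: on $\Mcpl\restrDo$ the map $\sign f$ is $(x,v)\mapsto(\sign u_J(x))\prod_{i\in J}\theta(v_i)^{N_i}$, which only depends on the arguments $\theta(v_i)$ and not on the moduli $|v_i|$; since the $\Rp$-action scales only the moduli, $\sign f$ is $\Rp$-invariant and therefore factors through $\left(\Mcpl\setminus\Mtop\right)/\Rp$, and under the identification above this factored map is exactly the restriction of $\sign f$ to $\xi^{-1}(\Delta)$. Combined with Proposition \ref{prop:AC1}, which identifies $\sign f:\xi^{-1}(\Delta)\to S^1$ with A'Campo's second model, this gives the desired statement, and the equivalent formulation in terms of $\left(\Mclog\setminus\Mlog\right)$ follows from the canonical isomorphisms of Theorem \ref{thm:logspaces}.

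The main obstacle I expect is the gluing across strata in step two: one must verify that the stratawise retractions $(x,v)\mapsto\sigma(x,v)\cdot(x,v)$ assemble to a \emph{globally} continuous map on $\Mcpl\setminus\Mtop$, i.e. that nothing pathological happens as a point approaches a boundary stratum where some $v_i\to\infty$ (the function $\xi_i$ tends to $0$ there, which is consistent, but one should check the action and $\sigma$ behave continuously in the chart coordinates of the real oriented multigraph construction) or as $x$ approaches $D_k$ for $k\notin J$ — but the latter does not occur within a fixed stratum, and the boundary behaviour at infinity is precisely what the continuous extension of the $\xi_i$ handles. I would therefore spend most of the argument making the continuity of $\sigma$ and of the slicing retraction on $\Mcpl\setminus\Mtop$ fully explicit in local coordinates, and treat the rest as routine.
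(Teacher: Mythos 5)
Your proof is correct and follows the same route the paper intends: the paper omits an explicit proof (it states the result follows from "the above construction"), and your argument supplies exactly the missing details — the scaling identity $\xi_i(\lambda\cdot(x,v))=\lambda^{-1}\xi_i(x,v)$ shows $\xi^{-1}(\Delta)$ is a global slice of the $\Rp$-action, the retraction $(x,v)\mapsto\sigma(x,v)\cdot(x,v)$ with $\sigma=\sum_i\xi_i$ is continuous by the paper's own remark that the $\xi_i$ extend continuously, and the $\Rp$-invariance of $\sign f$ is immediate since it only sees the $S^1$-components. The brief wobble over the formula for $\sigma$ resolves to the correct $\sigma(x,v)=\sum_i\xi_i(x,v)$, and the rest is sound.
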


\begin{proof}[Proof of Theorem \ref{thm:C}]
Theorem \ref{thm:C} follows from Propositions \ref{prop:AC1} and \ref{prop:AC2}.
\end{proof}

\section{Effect of a blowing-up}\label{sec:bl}
We use the same notation as in the introduction, see \S\ref{ss:setup}. Let $C\subset D$ be a nonsingular subvariety in normal crossings with $D=\cup_{i\in I} D_i$.  In this section we compute the effect of the blowing-up of $C$ on $\Mmlog$, $\Mlog$ and $\Mclog$.

Let $\sigma:\tilde M\to M$ denote the blowing-up of $C$ in $M$ and let $\tilde D $ be the total transform of $D$ by $\sigma$.  Thus $\tilde D= \sigma^{-1} (D)$ is the union of the strict transforms of the $D_i$, $i\in I$, denoted by $\tilde D_i$, and of the exceptional divisor $E$ of $\sigma$. We are going to express $\tMmlog$, $\tMlog$, and $\tMclog$ in terms of the original log-spaces and the normal bundles of the components of $D$ and to $C$.  For this we present two types of formulae. 
The first ones, Theorems \ref{thm:details-blow-up-mot} and \ref{thm:details-blow-up-log},   give canonical formulae for the new spaces over strata of the exceptional divisor. The second ones, Corollaries \ref{cor:details-blow-up-mot1} and \ref{cor:details-blow-up-top1}, present (local for the Zariski topology on $C$) trivialisations over $C$ of the projections $\tMmlog\to \Mmlog$, $\tMlog\to \Mlog$.  Over the smallest stratum, denoted later by ${\tildeDoK}$, both coincide and the canonical formula is a trivialisation; see Propositions \ref{prop:details-blow-up-mot2} and \ref{prop:details-blow-up-top2}.  Note, nevertheless, that ${\tildeDoK}$ can be empty.

The case $\tMmlog\to \Mmlog$ is particularly simple and can be expressed entirely in terms of the normal bundles. The case of $\tMlog\to \Mlog$ is, in turn, obtained by dividing, stratumwise, by powers of $\Rp$.  In this case, the local stratumwise trivialisations glue to a continuous (local with respect to $C$) trivialisation over all the strata; see Corollary \ref{cor:glued-blow-up-top1}. We also show that the fibres of the projection $\tMlog\to \Mlog$ are contractible, and, hence, that this projection is a homotopy equivalence; see Corollary \ref{cor:homotopyequiv}. For completeness, we also show how to obtain local (in $C$) trivialisations of  $\tMclog\to \Mclog$ by combining the previous two cases; see Theorems \ref{thm:details-blow-up-cpl1} and \ref{thm:details-blow-up-cpl2}.  The proofs rely on the identifications of Corollary \ref{cor:log-and-normal}.

The underlying ideas are simple and geometric, though the precise statements are technical and the notation that takes into account all the data is quite heavy.  For this reason, we  begin with two simple but illustrative examples that we present with schematic pictures.

\begin{ex}\label{ex:bu}
In this example, we assume that $D:x_1x_2=0$ in $M=\C^2$ and that $C=\{O\}$.
\begin{center}
\begin{tikzpicture}[scale=.75]
\fill[pattern=north west lines, pattern color=DarkSlateGray4,thick] (-4,1) rectangle (-1,0);
\fill[pattern=north east lines, pattern color=DarkSlateGray4,thick] (-4,-1) rectangle (-1,0);
\fill[pattern=north east lines, pattern color=DarkSlateGray4,thick] (4,1) rectangle (1,0);
\fill[pattern=north west lines, pattern color=DarkSlateGray4,thick] (4,-1) rectangle (1,0);
\fill[pattern=north west lines, pattern color=DarkSlateGray4,thick] (-1,4) rectangle (0,1);
\fill[pattern=north east lines, pattern color=DarkSlateGray4,thick] (1,4) rectangle (0,1);
\fill[pattern=north east lines, pattern color=DarkSlateGray4,thick] (-1,-4) rectangle (0,-1);
\fill[pattern=north west lines, pattern color=DarkSlateGray4,thick] (1,-4) rectangle (0,-1);
\fill[pattern=crosshatch dots,pattern color=Chartreuse3] (-1,-1) rectangle (1,1);
\draw[very thick,DarkOrange1] (-4,1) -- (-1,1) -- (-1,4);
\draw[very thick,DarkOrange1] (-4,-1) -- (-1,-1) -- (-1,-4);
\draw[very thick,DarkOrange1] (4,1) -- (1,1) -- (1,4);
\draw[very thick,DarkOrange1] (4,-1) -- (1,-1) -- (1,-4);
\draw[ultra thick,DarkOrchid3,dashed] (-1,-1) rectangle (1,1);
\fill[Firebrick3] (-1,-1) circle (3pt);
\fill[Firebrick3] (-1,1) circle (3pt);
\fill[Firebrick3] (1,-1) circle (3pt);
\fill[Firebrick3] (1,1) circle (3pt);
\draw[very thick] (-4,0) -- (4,0);
\draw[very thick] (0,-4) -- (0,4);
\fill[blue] (0,0) circle (3pt);
\begin{scope}[shift={(10,0)}]
\fill[pattern=crosshatch dots,pattern color=Chartreuse3] (-4,-1) rectangle (4,1);
\fill[pattern=north west lines, pattern color=DarkSlateGray4,thick] (-3,4) rectangle (-2,1);
\fill[pattern=north east lines, pattern color=DarkSlateGray4,thick] (-1,4) rectangle (-2,1);
\fill[pattern=north east lines, pattern color=DarkSlateGray4,thick] (-3,-4) rectangle (-2,-1);
\fill[pattern=north west lines, pattern color=DarkSlateGray4,thick] (-1,-4) rectangle (-2,-1);
\fill[pattern=north west lines, pattern color=DarkSlateGray4,thick] (1,4) rectangle (2,1);
\fill[pattern=north east lines, pattern color=DarkSlateGray4,thick] (3,4) rectangle (2,1);
\fill[pattern=north east lines, pattern color=DarkSlateGray4,thick] (1,-4) rectangle (2,-1);
\fill[pattern=north west lines, pattern color=DarkSlateGray4,thick] (3,-4) rectangle (2,-1);
\draw[very thick,DarkOrange1] (-3,-4) -- (-3,-1);
\draw[very thick,DarkOrange1] (-1,-4) -- (-1,-1);
\draw[very thick,DarkOrange1] (3,-4) -- (3,-1);
\draw[very thick,DarkOrange1] (1,-4) -- (1,-1);
\draw[very thick,DarkOrange1] (-3,1) -- (-3,4);
\draw[very thick,DarkOrange1] (-1,1) -- (-1,4);
\draw[very thick,DarkOrange1] (3,1) -- (3,4);
\draw[very thick,DarkOrange1] (1,1) -- (1,4);
\draw[ultra thick,Firebrick2] (-4,1) -- (-3,1);
\draw[ultra thick,Firebrick2] (-1,1) -- (1,1);
\draw[ultra thick,Firebrick2] (3,1) -- (4,1);
\draw[ultra thick,Firebrick2] (-4,-1) -- (-3,-1);
\draw[ultra thick,Firebrick2] (-1,-1) -- (1,-1);
\draw[ultra thick,Firebrick2] (3,-1) -- (4,-1);
\draw[ultra thick,dashed,Firebrick2] (-3,1) -- (-1,1);
\draw[ultra thick,dashed,Firebrick2] (3,1) -- (1,1);
\draw[ultra thick,dashed,Firebrick2] (-3,-1) -- (-1,-1);
\draw[ultra thick,dashed,Firebrick2] (3,-1) -- (1,-1);
\draw[ultra thick,DarkOrchid3,dashed] (-3,-1) -- (-3,1);
\draw[ultra thick,DarkOrchid3,dashed] (-1,-1) -- (-1,1);
\draw[ultra thick,DarkOrchid3,dashed] (3,-1) -- (3,1);
\draw[ultra thick,DarkOrchid3,dashed] (1,-1) -- (1,1);
\draw[very thick, blue] (-4,0) -- (4,0);
\draw[very thick] (-2,-4) -- (-2,4);
\draw[very thick] (2,-4) -- (2,4);
\end{scope}
\draw[latex-] (4.5,0) -- (5.5,0);
\end{tikzpicture}
\end{center}

The picture on the left represents the space $\Mclog\restrD$. 
The subspace $\Mlog\restrD$ is represented by orange straight lines together with the red dots. The subspace $\Mmlog\restrD$ is represented by hatched green lines outside the intersection and with green dots near the intersection point. 
The mixed part is represented in purple.

The picture on the right represents $\tMD^\clog_{|\tilde D}$. The exceptional divisor $E$ is represented as the horizontal blue line.  The subspace $\tMD^\log_{|\tilde D}$ is represented by straight lines. The subspace $\tMD^\mlog_{|\tilde D}$ is represented by green hatched lines and dots. 
The mixed part is represented by dashed lines.

The color code shows how each piece is mapped by $\sigma$ (for $\tilde D$) or by $\sigma^{\clog}$ (for $\tMclog_{|\tilde D}$). In particular, some pieces of the mixed part of $\tMD^\clog_{|\tilde D}$ are mapped to $\Mlog_{|\{O\}}$.

\noindent Let us start with the $\mlog$-spaces. After blowing-up, $\Mmlog_{|\{O\}}\simeq \C^*\times\C^*$ is replaced by
\begin{itemize}[nosep]
\item $\tMmlog_{|\{p\}}\simeq\C^*\times\C^*$ for $p\in E\cap\tilde D=\{p_1,p_2\}$, and
\item $\tMmlog_{|\{p\}}\simeq\C^*$ for $p\in E\setminus\tilde D\simeq\C^*$.
\end{itemize}

\noindent Concerning the $\log$ spaces, the blowing-up replaces $\Mlog_{|\{O\}}\simeq S^1\times S^1$ with
\begin{itemize}[nosep]
\item $\tMlog_{|\{p\}}\simeq S^1\times S^1$ for $p\in E\cap\tilde D=\{p_1,p_2\}$, and
\item $\tMlog_{|\{p\}}\simeq S^1$ for $p\in E\setminus\tilde D\simeq\C^*\simeq S^1\times \Rp$.
\end{itemize}
Therefore, as sets, above the origin we get $$S^1\times S^1\leftarrow(S^1\times S^1)\sqcup(S^1\times S^1)\sqcup(S^1\times\C^*)=(S^1\times S^1)\times[0,\infty].$$
\end{ex}

\begin{ex}\label{ex:bu2}
We now assume that $D:x_1=0$ in $M=\C^2$ and that $C=\{O\}$.
\begin{center}
\begin{tikzpicture}[scale=.75]
\fill[pattern=north west lines, pattern color=DarkSlateGray4,thick] (-1,4) rectangle (0,0);
\fill[pattern=north east lines, pattern color=DarkSlateGray4,thick] (1,4) rectangle (0,0);
\fill[pattern=north east lines, pattern color=DarkSlateGray4,thick] (-1,-4) rectangle (0,0);
\fill[pattern=north west lines, pattern color=DarkSlateGray4,thick] (1,-4) rectangle (0,0);
\draw[ultra thick,Chartreuse3] (-1,0) -- (1,0);
\draw[very thick,DarkOrange1] (-1,4) -- (-1,-4);
\draw[very thick,DarkOrange1] (1,4) -- (1,-4);
\fill[Firebrick3] (-1,0) circle (3pt);
\fill[Firebrick3] (1,0) circle (3pt);
\draw[very thick] (0,-4) -- (0,4);
\fill[blue] (0,0) circle (3pt);
\begin{scope}[shift={(7.5,0)}]
\fill[pattern=north west lines, pattern color=DarkSlateGray4,thick] (-1,4) rectangle (0,1);
\fill[pattern=north east lines, pattern color=DarkSlateGray4,thick] (1,4) rectangle (0,1);
\fill[pattern=north east lines, pattern color=DarkSlateGray4,thick] (-1,-4) rectangle (0,-1);
\fill[pattern=north west lines, pattern color=DarkSlateGray4,thick] (1,-4) rectangle (0,-1);
\fill[pattern=crosshatch dots,pattern color=Chartreuse3] (-4,-1) rectangle (4,1);
\draw[very thick,DarkOrange1] (-1,4) -- (-1,1);
\draw[very thick,DarkOrange1] (1,4) -- (1,1);
\draw[very thick,DarkOrange1] (-1,-4) -- (-1,-1);
\draw[very thick,DarkOrange1] (1,-4) -- (1,-1);
\draw[very thick,Firebrick2] (-4,1) -- (-1,1);
\draw[very thick,Firebrick2] (4,1) -- (1,1);
\draw[very thick,Firebrick2] (-4,-1) -- (-1,-1);
\draw[very thick,Firebrick2] (4,-1) -- (1,-1);
\draw[ultra thick,Firebrick2,dashed] (-1,-1) rectangle (1,1);
\draw[very thick, blue] (-4,0) -- (4,0);
\draw[very thick] (0,-4) -- (0,4);
\end{scope}
\draw[latex-] (2,0) -- (3,0);
\end{tikzpicture}
\end{center}

The straight green line represents the restriction of $\Mmlog$ above $O$ (as a set it coincides with $\C^*$), and the red dots represent the restriction of $\Mlog$ above $O$ (as a set it coincides with $S^1$). 

Note that the blowing-up creates a mixed part in $\tMcpl_{|\tilde D}$ that is mapped to $\Mlog_{|\{O\}}$. 

\noindent Let us start with the $\mlog$ spaces. After blowing-up, $\Mmlog_{|\{O\}}\simeq \C^*$ is replaced by
\begin{itemize}[nosep]
\item $\tMmlog_{|\{p\}}\simeq\C^*\times\C^*$ for $p$ in the singleton $E\cap\tilde D$, and
\item $\tMmlog_{|\{p\}}\simeq\C^*$ for $p\in E\setminus\tilde D\simeq\C$.
\end{itemize}

\noindent Concerning the $\log$ spaces, the blowing-up replaces $\Mlog_{|\{O\}}\simeq S^1$ with
\begin{itemize}[nosep]
\item $\tMlog_{|\{p\}}\simeq S^1\times S^1$ for $p$ in the singleton $E\cap\tilde D$, and
\item $\tMlog_{|\{p\}}\simeq S^1$ for $p\in E\setminus\tilde D\simeq\C$.
\end{itemize}
Therefore, as sets, above the origin we get $$S^1\leftarrow(S^1\times S^1)\sqcup(S^1\times\C)\simeq S^1\times\left(\C\times\R_{\ge0}\setminus\{0,0\}\right)/\Rp,$$
so that the fibre is the join of a point and a circle, that is the cone over a circle. 
\end{ex}

\subsection{Set-up}
Recall that $\sigma:\tilde M\to M$ denotes the blowing-up of $M$ along a nonsingular subvariety 
$C\subset D$ that is in normal crossings with $D=\cup_{i\in I} D_i$. In particular, for every $i\in I$, either $C$ is included in $D_i$ (in that case we say $i\in K\subset I$), $C$ intersects $D_i$ transversally (in that case we say $i\in\Lambda\subset I$), or $C$ does not meet $D_i$. In the latter situation, the blowing-up along $C$ has no effect on $D_i$, so we assume without loss of generality that $I=K\cup\Lambda$. By the assumption $C\subset D$, the index set $K\coloneq\left\{i\in I\,:\,C\subset D_i\right\}$ is non-empty. 

Denote by $\tilde D_i$ the strict transform of $D_i$ and by $\tilde D_0\coloneq E$ the exceptional divisor of $\sigma$. For $J\subset I$ set \begin{equation}\label{eqn:tildeJ}\tilde J\coloneq J\cup \{0\}.\end{equation}
Then 
\begin{align}\label{eqn:tildeD}\tilde D^\circ_{\tilde J}\coloneq \bigcap_{j\in\tilde J}\tilde D_j\setminus\bigcup_{i\in \tilde I\setminus\tilde J}D_i =\left(E\cap\bigcap_{j\in J}\tilde D_j\right) \setminus \bigcup_{i\in I\setminus J} \tilde D_i.
\end{align}
Note that if $C=D_K$ then $\tilde D^\circ_{\tilde K}$ is empty.

\begin{rmk}\label{rem:LambdaEmpty}
In order to simplify the notation in the theorems of this section, we assume that $\Lambda=\varnothing$ so that $I=K$. This means that all the bijections stated over $C$ are actually defined over every stratum of the natural stratification of $C$, i.e. given by the strata $C\cap\Do_R$ for $R\subset\Lambda$.
\end{rmk}

Let $\tilde L$ be the line bundle associated to the divisor $E$ and fix a section $\tilde s$ of $\tilde L$ such that $\tilde s^{-1}(0)=E$. We define 
$$\begin{array}{rcccl}
\tilde L_k\coloneq \tilde L^{-1}\otimes\sigma^*L_k&\text{ and }&\tilde s_k\coloneq\frac{\sigma^*s_k}{\tilde s} & \text{ for }&k\in K, \\ 
\tilde L_i\coloneq \tilde \sigma^*L_i &\text{ and }&\tilde s_i\coloneq\tilde \sigma^*s_i & \text{ for }&i\in I\setminus K.
\end{array}$$

Fix non-vanishing local sections $\ell_i$ of $L_i$, $i\in I$, and  $\tilde\ell$ of $\tilde L$. Then, for $i\in I$, let $\tilde\ell_i$  denote the section of $\tilde L_i$ defined by 
$\sigma^*\ell_i=\tilde\ell_i\cdot\tilde\ell$ if $i\in K$ and $\tilde\ell_i=\sigma^*\ell_i$ if $i\in I\setminus K$. 
These sections give local trivialisations (over Zariski open sets) of the corresponding line bundles and we use them, together with the section $s_k$, $\tilde s_k$, and $\tilde s$, to locally describe the components of $D$ and $\widetilde D$ as the zero loci of functions with values in $\mathbb C$. Indeed, the zero loci  of $g_k\coloneq s_k/\ell_k$, $\tilde g_k=\tilde s_k/\tilde\ell_k$, and $\tilde g=\tilde s/\tilde\ell$, are $D_k$, $\tilde D_k$, and $E$, respectively.  Note that $g_k\circ\sigma=\tilde g_k\cdot\tilde g$. 

We need a vector bundle $\LLL$ with a section $s$ so that $C= s^{-1}(0)$. Since $C$ is a subset of $D_K$, the sections $s_k$ already vanish on $C$.  Let $\LLC$ be a vector bundle defined on $M$ with a section $s_C$ such that the section $s= (s_C, s_k;k\in K)$ of the vector bundle 
$\LLL= \LLC\oplus\bigoplus_{k\in K} L_k$ is transverse to the zero section 
and defines $C$.  A suitable bundle $\LLC$ and section $s_C$ can always be chosen locally (for the Zariski topology). 

The blowing-up $\tilde M = Bl_C(M)$ of $C$ in $M$ can be described in terms of the section $s$ as 
\begin{equation}\label{eqn:Mtilde}
\tilde M=\text{ Closure of the image of } \,M\setminus C\longhookrightarrow 
\PP \LLL = \PP\Big (\LLC \oplus\bigoplus_{k\in K}L_k\Big), 
\end{equation} 
where the inclusion is given by $x\mapsto \left [ s \right] = \left[s_C (x):s_k(x),\,k\in K\right]$.  
In particular, \eqref{eqn:Mtilde} identifies the exceptional divisor $E$ of $\sigma$ with 
the projectivisation of $\LLL_{|C} $
\begin{align}\label{eqn:sectionsembedding}
\iso_s: E \xrightarrow{
   \,\smash{\raisebox{-0.65ex}{\ensuremath{\scriptstyle\sim}}}\,}
\PP \LLL_{|C} .
\end{align}
This identification depends on the choice of the section $s$.

\subsection{Normal bundles and the exceptional divisor}
In order to get rid of this dependence on the section $s$, we use the identifications in terms of normal bundles from Corollary \ref{cor:log-and-normal}.  

The normal bundle $\ta $ to $C$ in $M$, also denoted later by  $\ta_{M,C}$,  is the quotient bundle on $C$ defined by the exact sequence 
$$
\begin{tikzcd}
0\arrow[r]  & T C \arrow[r] 
  & TM_{|C} \arrow[r, "\pi_{C}"] & \arrow[r]
\ta    & 0
\end{tikzcd} \, .
$$
The differential of $s$ induces a vector bundle isomorphism $s': \ta \to \LLL_{|C}$.  For $x\in C$ we have the commutative diagram 
\begin{equation}\label{diag:name}
\begin{gathered}
\begin{tikzcd}
0\arrow[r]  & T_x C \arrow[r] 
 \arrow[d] & T_xM \arrow[r, "\pi_{C,x}"] \arrow[d, "T_x s"] & 
 \ta_{x}\arrow[d, "s'(x)"] \arrow[r] & 0\\
0 \arrow[r] & T_x M \arrow[r] & T_x \LLL \arrow[r, "\pi_{\LLL,x}"]& T_x \LLL_{x} 
= \LLL_{x} \arrow[r] & 0 \, ,
\end{tikzcd}
\end{gathered}
\end{equation}
where the map $T_x M\to T_x \LLL $ in the bottom row is the differential of the embedding $M \to \LLL$ of $M$ to the zero section of $\LLL$.

The exceptional divisor $E$ can be identified canonically with the projectivisation of the normal bundle 
\begin{align}\label{eqn:canon}
\can_E : E  \IsoTo 
\PP \ta 
. 
\end{align} 
The latter can be interpretted geometrically in terms of complex arc liftings.  Let $\gamma : (\C, 0) \to (M,x)$ be a complex arc such that $\gamma (t) \notin C$ for $t\ne 0$. We denote by $\tilde \gamma : (\C, 0) \to \tilde M$ the unique lifting of $\gamma$ to $\tilde M$ with respect to the blowing-up. If $\gamma'(0) \in T_x M \setminus T_xC$, then $\can_E (\tilde \gamma (0)) = [\pi_{C,x} \gamma'(0)] $.

The two embeddings \eqref{eqn:sectionsembedding} and \eqref{eqn:canon} are related by the formula 
\begin{align}\label{eqn:tocanon}
\iso_s =  \PP (s')  \circ \can_E ,
\end{align}
where $s'(x)$ is the linear isomorphism defined by \eqref{diag:name}.  
 
To show \eqref{eqn:tocanon}, take an arc  $\gamma : (\C, 0) \to (M,x)$, 
$v=\gamma'(0) \notin T_x C$.  Then the lifting of $\gamma$ to \eqref{eqn:Mtilde}, which we denote by $\tilde \gamma_s$,  is given, for $t\ne 0$, by $\tilde \gamma_s (t) = (\gamma(t), [s(\gamma (t))])$. Therefore, 
$$
\tilde \gamma_s (0) = \lim_{t\to 0} [s(\gamma (t))] = 
\lim_{t\to 0} [(s\circ\gamma)' (t))] =
[\pi_{\LLL ,x} 
(s\circ\gamma)' (0)]= [s'(x) (\pi_{C,x}(\gamma'(0)))] =  
\PP (s') ([\pi_{C,x} v]), 
$$ 
where the second equality states that the limits of secant and tangent directions coincide.  This completes the argument for \eqref{eqn:tocanon}.  

The isomorphism \eqref{eqn:canon} extends to a line bundle isomorphism as the normal bundle $\tilde \ta$ to $E$ in $\tilde M$ corresponds (canonically) to the tautological bundle on  $\PP \ta$.  Taking this identification into account, we associate to a nonzero vector $v\in \ta^*$ the point $[v]\in \PP \ta$ and a vector $\tilde v \in \tilde \ta^*_{[v]}$.  

 This map can again be described in terms of arc liftings. Let $\gamma(t)$ be an arc such that $\gamma(0)=x$ and $\pi_{C,x}(\gamma'(0))=v\in \ta^*$. Denote by $\tilde\gamma$ its lifting to $\tilde M$.  Then $\tilde\gamma(0)=[v]\in\PP(\ta_x)$ and $\pi_{E}(\tilde\gamma'(0))=\tilde v\in\tilde \ta^*_{[v]}$,  where 
$ \pi_{E} : T\tilde M_{|E} \to \tilde \ta$ denotes the projection.\\

Therefore, the following lemma holds.

\begin{lemma}\label{lem:normalid}
Let $\ta^*$ be the normal bundle of $C$ in $M$ without its zero section, and let $\tilde \ta^*$ be the normal bundle of $E$ in $\tilde M$ without its zero section. Then we have a canonical bijection 
$$\ta^*\ni v\mapsto([v],\tilde v)\in\tilde \ta^*. $$
\end{lemma}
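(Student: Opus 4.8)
The plan is to assemble the bijection from the two canonical identifications already at hand. First, by \eqref{eqn:canon} we identify $E$ with $\PP\ta$, so a point of $E$ is written $[v]$ for some $x\in C$ and $v\in\ta_x^*$, well defined up to a nonzero scalar. Second, I would record the canonical identification of the normal line bundle $\tilde\ta$ of $E$ in $\tilde M$ with the tautological line bundle $\mathcal{O}_{\PP\ta}(-1)$, whose fibre over $[v]$ is the line $\C v\subset\ta_x$. Granting these, the asserted map is simply $v\mapsto([v],\,v)$, the second entry being $v$ regarded as an element of $\C v=\tilde\ta_{[v]}$, and its inverse sends $\tilde v\in\tilde\ta^*_{[w]}$, which by construction lies in $(\C w)\setminus\{0\}\subset\ta_x$, back to itself viewed in $\ta_x^*$. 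Since $[\tilde v]=[w]$ recovers the base point in $E$, the two assignments are mutually inverse, so once the identifications are in place bijectivity is formal. Put differently, this is the restriction, to the complements of zero sections, of the fibrewise blow-down $\mathcal{O}_{\PP\ta}(-1)\to\ta$, $([v],w)\mapsto w$, which is an isomorphism away from the zero section.

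The only point that genuinely requires an argument is therefore the canonical identification $\tilde\ta\simeq\mathcal{O}_{\PP\ta}(-1)$, and the cleanest route is the arc-lifting description sketched before the statement. Given $v\in\ta_x^*$ choose any complex arc $\gamma:(\C,0)\to(M,x)$ with $\pi_{C,x}(\gamma'(0))=v$; since $\gamma'(0)\notin T_xC$ one has $\gamma(t)\notin C$ for $t\ne 0$, so $\gamma$ lifts uniquely to $\tilde\gamma:(\C,0)\to\tilde M$ with $\can_E(\tilde\gamma(0))=[v]$, and one sets $\tilde v\coloneq\pi_E(\tilde\gamma'(0))\in\tilde\ta_{[v]}$, where $\pi_E:T\tilde M_{|E}\to\tilde\ta$ is the projection. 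I would verify, in the local coordinates adapted to $C$ of \S\ref{ss:set-up} (so that $\sigma$ is given in the standard chart of the blowing-up), three points: that $\tilde v$ depends only on $v$ and not on the choice of $\gamma$ --- changing $\gamma$ within this class alters $\tilde\gamma'(0)$ only by a vector tangent to $E$, which $\pi_E$ annihilates; that $d\sigma$ maps $\tilde\ta_{[v]}$ isomorphically onto the tautological line $\C v\subset\ta_x$, carrying $\tilde v$ to $v$, which is exactly the identification used above; and that $v\mapsto\tilde v$ is equivariant for rescaling, hence descends correctly over $\PP\ta$. Compatibility with the section-dependent picture is then automatic from \eqref{eqn:tocanon}, which rewrites $\iso_s$ as $\PP(s')\circ\can_E$.

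The main obstacle is exactly this bookkeeping: checking that the arc construction is independent of all choices and really reproduces the tautological bundle. After that, nothing remains to be done. If one prefers to avoid arcs, an alternative is purely algebraic: identify $E=\PP\ta$ via $\can_E$, then use $\tilde\ta=\mathcal{O}_{\tilde M}(E)_{|E}$ together with the description \eqref{eqn:Mtilde} of $\tilde M$ inside $\PP\LLL$ and the isomorphism $s':\ta\IsoTo\LLL_{|C}$ from \eqref{diag:name} to see that $\mathcal{O}_{\tilde M}(E)_{|E}$ is canonically $\mathcal{O}_{\PP\ta}(-1)$, and conclude as above.
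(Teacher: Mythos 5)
Your proof is correct and takes essentially the same route as the paper: the lemma carries a \qed precisely because the preceding discussion already identifies $\tilde\ta$ canonically with the tautological line bundle on $\PP\ta\simeq E$ via arc-lifting, so the map $\ta^*\to\tilde\ta^*$ is the tautological blow-down restricted away from the zero sections, exactly as you describe. Your extra observations (independence of the chosen arc, scale equivariance, and the alternative via $\mathcal{O}_{\tilde M}(E)_{|E}$ and \eqref{eqn:Mtilde}) fill in details the paper leaves implicit but do not change the argument.
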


We use the following notation for the normal bundles: $N_i=N_{M,D_i}$ and $N_J = \oplus_{i\in J} {N_i}_{|D_J}$.  The latter is  well defined on $D_J$ and equals the normal bundle of $D_J$, also denoted by $\ta_{M,{D_J}}$. Recall that $N^*_i$ denotes $N_i$ with the zero section removed, and by $N^\star_J$ we denote the fibre product of the $N^*_i$ over $\Do_J$, namely: $$N^\star_J \coloneq\fprod{\Do_J}{i\in J} N^*_i.$$
We use a similar notation for the normal fibres to the divisors in $\tilde M$. 

For $C\subset D_K\subset M$, if $\dim C<\dim D_K$ then we have an exact sequence of normal bundles on $C$:
\begin{align}\label{eqn:sequence_of_NB}
0\rightarrow N_{D_K,C} \xrightarrow[]{i} N_{M,C} \xrightarrow[]{\pi} {N_{K}}_{|C} \rightarrow0 .
\end{align}
Since $\ta_{{K}} = \oplus_{k\in K} {\ta_{k}}_{|D_K}$, we write the images of $\pi$ as 
$\pi(v)=(v_k,\,k\in K)$, where $v_k=\pi_k(v)$ denotes the k-th coordinate of $\pi(v)$.

Analogously to \eqref{eqn:comm_diagr_s} and \eqref{diag:name}, we have the following commutative diagram:
$$
\begin{tikzcd}
0 \arrow[r] & N_{D_K,C} \arrow[r,"i"] \arrow[d,"s_C'"',"\rotatebox{90}{$\sim$}"] & N_{M,C} \arrow[r,"\pi"] \arrow[d,"s'"',"\rotatebox{90}{$\sim$}"] & {N_{K}}_{|C}  \arrow[r] \arrow[d,"{(s_k',k\in K)}"',,"\rotatebox{90}{$\sim$}"]& 0 \\
0 \arrow[r] & \LLC_{|C} \arrow[r] & \displaystyle\LLC_{|C} \oplus\bigoplus_{k\in K}{L_k}_{|C} \arrow[r] & \displaystyle\bigoplus_{k\in K}{L_k}_{|C} 
\arrow[r] & 0 .
\end{tikzcd} 
$$
Since the vertical arrows are isomorphisms, the bottom sequence induces a splitting of the top one:
\begin{equation}\label{eqn:splitting}
\begin{tikzcd}
0 \arrow[r] & N_{D_K,C} \arrow[r,"i", yshift=.2em] & N_{M,C} \arrow[r,"\pi", yshift=.2em] \arrow[l,"i^*", yshift=-.2em] & {N_{K}}_{|C} \arrow[r] \arrow[l,"\pi^*", yshift=-.2em] & 0.
\end{tikzcd}
\end{equation}
This splitting is defined only on an open subset of $M$ where $s_C$ is defined, and, moreover, it depends on the choice of all sections $s_C$ and $s_k,\,k\in K$.  \\

The normal bundle of $D_k$, $k\in K$, and of its strict transform are related by the following.

 \begin{lemma}\label{lem:missing_link}
Over $E\cap \tilde D_k$, $k\in K$, the line bundle $\sigma^*N_k$ is canonically isomorphic to $\tilde N \otimes \tilde N_k$.
 \end{lemma}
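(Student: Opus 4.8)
The plan is to express each of the three normal bundles appearing in the statement as the restriction of its associated line bundle, and then to combine these with the factorisation of the total transform of $D_k$.

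First I would recall the basic fact that for a nonsingular divisor $Z$ in a nonsingular variety $X$ there is a \emph{canonical} isomorphism $N_{X,Z}\simeq\OO_X(Z)|_Z$: it is obtained by dualising the canonical isomorphism $\mathcal{I}_Z/\mathcal{I}_Z^2\simeq\OO_X(-Z)|_Z$ (equivalently, it is the right-hand vertical arrow of the diagram \eqref{eqn:comm_diagr_s}, which is independent of the choice of a defining section). Applying this to $D_k\subset M$, to $E\subset\tilde M$, and to $\tilde D_k\subset\tilde M$ gives canonical isomorphisms $N_k\simeq L_k|_{D_k}$, $\tilde N\simeq\tilde L|_E$, and $\tilde N_k\simeq\tilde L_k|_{\tilde D_k}$, the last one using that the bundle $\tilde L_k$ introduced in \S\ref{ss:set-up} is the line bundle associated to the strict transform $\tilde D_k$.

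Next I would record the point on which the whole statement rests: since $C\subset D_k$ and $D_k$ is nonsingular, the exceptional divisor appears with multiplicity one in the total transform of $D_k$, i.e. $\sigma^*D_k=\tilde D_k+E$ as Cartier divisors. Concretely this is precisely the relation $g_k\circ\sigma=\tilde g_k\cdot\tilde g$ recorded in \S\ref{ss:set-up}, and it is exactly what makes $\tilde L_k=\tilde L^{-1}\otimes\sigma^*L_k$ equal to $\OO_{\tilde M}(\tilde D_k)$; in particular it yields a canonical isomorphism $\sigma^*L_k\simeq\tilde L\otimes\tilde L_k$. I expect this multiplicity-one step to be the only delicate point: it is where the hypothesis $C\subset D_k$ (and not merely $C$ transverse to $D_k$) is genuinely used, and once it is in place the rest is bookkeeping of canonical identifications.

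Finally I would restrict everything to $E\cap\tilde D_k$ and assemble. On one hand, since $\sigma$ maps $E\cap\tilde D_k$ into $C\subset D_k$, pulling back $N_k\simeq L_k|_{D_k}$ gives $\sigma^*N_k\simeq(\sigma^*L_k)|_{E\cap\tilde D_k}$ over $E\cap\tilde D_k$. On the other hand, restricting $\tilde N\simeq\tilde L|_E$ and $\tilde N_k\simeq\tilde L_k|_{\tilde D_k}$ to $E\cap\tilde D_k$ and tensoring gives $(\tilde L\otimes\tilde L_k)|_{E\cap\tilde D_k}\simeq\tilde N\otimes\tilde N_k$ over $E\cap\tilde D_k$. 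Composing with $\sigma^*L_k\simeq\tilde L\otimes\tilde L_k$ produces the asserted canonical isomorphism $\sigma^*N_k\simeq\tilde N\otimes\tilde N_k$ over $E\cap\tilde D_k$. As an alternative I would note that the same isomorphism can be described fibrewise by lifting complex arcs in the manner of Lemma \ref{lem:normalid}: after identifying $E\cap\tilde D_k$ with $\PP(N_{D_k,C})$, the tensor structure shows up because an arc in $\tilde M$ transverse to both $E$ and $\tilde D_k$ projects under $\sigma$ to an arc meeting $D_k$ with multiplicity two, whose second-order normal datum factors as the product of the first-order data of the two transversal arcs.
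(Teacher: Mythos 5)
Your proof is correct, and it arrives at the same isomorphism as the paper by a somewhat different route. The paper works directly with the chosen sections: it uses the section-induced identifications $v_k\mapsto\partial_{v_k}s_k$ (and likewise for $\tilde s$, $\tilde s_k$), each of which \emph{does} depend on the section, and then checks by an explicit computation with units that the composite $\sigma^*N_k\simeq\tilde N\otimes\tilde N_k$ is unchanged when $s_k,\tilde s,\tilde s_k$ are replaced by $us_k,\tilde u\tilde s,\tilde u_k\tilde s_k$ with $u=\tilde u\tilde u_k$, i.e.\ whenever the constraint $\sigma^*s_k=\tilde s\,\tilde s_k$ is preserved. You bypass that computation by working with the divisorial sheaves themselves: the canonical identification $N_{X,Z}\simeq\OO_X(Z)|_Z$ via the conormal sheaf, the multiplicity-one identity $\sigma^*D_k=\tilde D_k+E$ (which is indeed where $C\subset D_k$ and the smoothness of $D_k$ enter, and which the paper records as $g_k\circ\sigma=\tilde g_k\,\tilde g$), and the resulting canonical factorisation $\sigma^*\OO_M(D_k)\simeq\OO_{\tilde M}(E)\otimes\OO_{\tilde M}(\tilde D_k)$. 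What your formulation buys is that no independence check is needed; what the paper's buys is the explicit description $w_k=\tilde w\,\tilde w_k$ in terms of derivatives of compatible sections, which is what is actually invoked afterwards (the product $\tilde v\,\tilde v_q$ in Proposition \ref{prop:sigma_*}). The two isomorphisms coincide, because for any section $s$ cutting out $Z$ the conormal identification sends $v$ to $\partial_v s$ once $\OO_X(Z)$ is trivialised by $s$; it would be worth stating this explicitly since the lemma is later used through that description. One small slip in your parenthetical: the right-hand vertical arrow $(s_i')_x$ of \eqref{eqn:comm_diagr_s} is \emph{not} independent of the defining section --- it rescales by the value of the unit, as the paper itself notes; what is canonical is the induced map $N_i\to\OO_M(D_i)|_{D_i}$ obtained when $L_i$ is identified with $\OO_M(D_i)$ via that same section. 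Your argument only ever uses the latter, so nothing breaks, but the phrasing should be corrected.
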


 \begin{proof}
We have $\sigma^*L_k = \tilde L\otimes \tilde L_k$ and the derivatives of $s_k, \tilde s, \tilde s_k$  over $E\cap \tilde D_k$, similar to the maps $s'_i$ of \eqref{eqn:comm_diagr_s}, induce isomorphisms $\tilde N \simeq \tilde L$, $\tilde N \simeq \tilde L$, $\tilde N \simeq \tilde L$.  These isomorphisms depend on the choice of sections, but we show that the induced isomorphism $\sigma^*N_k\simeq \tilde N \otimes \tilde N_k$ is independent of this choice provided  $\sigma ^* s_k = \tilde s \tilde s_k$.

Recall, after \eqref{eqn:comm_diagr_s}, that the isomorphism  $N_k \simeq L_k$ is given by associating $w_k = s_k'(v_k)=D_x s_k(v_k)=\partial_{v_k} s_k\in L_{k,x}$ to $v_k \in N_{k,x}$. We show that for $w_k = s_k'(v_k)\in L_{k,x}$, $\tilde w = \tilde s'(\tilde v)\in \tilde L_{\tilde x}$, $\tilde w_k = \tilde s_k'(\tilde v_k)\in \tilde L_{k,\tilde x}$, the property $w_k=\tilde w \tilde w_k$ depends only on the vectors $v_k, \tilde v, \tilde v_k$ and not on the sections. (Here, for simplicity of notation, we identify $\sigma^* L_{k,x}$ and $L_{k,\tilde x}$.)
If $u_k$ is an analytic function not vanishing at $x$ then the section $u_ks_k$ associates to $v_k$ the vector $u_k(x) w_k$. This is because $\partial_{v_k}(u s_k)=u(x)\partial_{v_k} s_k+(\partial_{v_k} u_k)s_k(x)$ and $s_k(x)=0$. The same property holds for $\tilde u \tilde s$ and $\tilde u_k \tilde s_k$. Thus, if $u(x)=\tilde u(\tilde x)\tilde u_k(\tilde x)$, then the property  $w_k=\tilde w \tilde w_k$ is preserved as claimed.
 \end{proof}

\subsection{Algebraic case: $\Mmlog \simeq \Mmot$}\label{ss:motbu}
By Corollary \ref{cor:log-and-normal}, we may identify 
$$
\Mmlog_{|C} \simeq {N^\star_K}_{|C}.
$$
We stratify $E$ by $\tildeDoQ=E\cap (\bigcap_{q\in Q}\tilde D_q\setminus\bigcup_{k\in K\setminus Q}\tilde D_k)$, where $Q\subset K$; see \eqref{eqn:tildeJ}, \eqref{eqn:tildeD} and Remark \ref{rem:LambdaEmpty}. Thus the biggest stratum of $E$ is $\tilde D^\circ_{\tilde\varnothing}=E\setminus\cup_{k\in K}\tilde D_k$ for $Q=\varnothing$. If $Q=K$, then the stratum $\tilde D_{\tilde K}=E\cap (\bigcap_{k\in K}\tilde{D}_k)$ is  nonempty if and only if $\dim C< \dim D_K$.

Thus, for any $Q\subset K$, we have 
$$\tMmlog_{|\tildeDoQ} \simeq \tN^\star_{\tilde Q ~|\tildeDoQ}.
$$
Given $x\in C$, a preimage $\tilde x\in \sigma^{-1}(x)$ such that $\tilde x \in \tildeDoQ$, and $(\tilde v, \tilde v_q; q\in Q) \in {\tN^\star_{\tilde Q~|\tildeDoQ}}$,   
 by Lemma \ref{lem:normalid}, there is a unique vector  $v\in \ta_{M,C,x}$ corresponding to  $(\tilde x = [v], \tilde v)$. Let us denote it by $v=v(\tilde x, \tilde v)$.  Note that, since $\tilde x \in \tildeDoQ$, the coordinates $(v_q)_{q\in Q}$ of $\pi(v)\in N_K$ vanish and the coordinates $(v_k)_{k\in K\setminus Q}$ are non-zero. In other words, $v\in \pi^{-1} (N^\star_{K\setminus Q})$, where we consider $N^\star_{K\setminus Q}$ embedded in $N$ via the zero section on $N_q$ for $q\in Q$.  

 Let us now interpret the morphism $\sigma^\mlog : \tMmlog_{|\tildeDoQ} \to \Mmlog_{|C}$ in terms of normal bundles.

 \begin{prop}\label{prop:sigma_*}
The morphism   $\sigma^\mlog: \tMmlog_{|\tildeDoQ} \to \Mmlog_{|C}$ induces a map  $\sigma_N^\mlog : {\tN^\star_{\tilde Q~|\tildeDoQ}}   \to {N^\star_K}_{|C}$  given by 
\begin{align}\label{eqn:sigma_*}
\sigma_N^\mlog(\tilde x, \tilde v; \tilde v_q, q\in Q)=(\pi_k(v), {k\in K\setminus Q}; 
\tilde v\tilde v_q, q\in Q),
\end{align}
where $v=v(\tilde x, \tilde v)$.  
\end{prop}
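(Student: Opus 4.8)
**Proof proposal for Proposition \ref{prop:sigma_*}.**

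The plan is to unwind both identifications --- the one of Corollary \ref{cor:log-and-normal} relating $\Mmlog$ to normal bundles, and the one of Lemma \ref{lem:normalid} relating $\ta^*_{M,C}$ to $\tilde\ta^*$ via arc lifting --- and then simply track a germ $g$ of a defining equation through the morphism $\sigma^\mlog$. Recall that by functoriality (\S\ref{ss:functor}) the map $\sigma^\mlog$ sends $(\tilde x,\tilde\Phi)$ to $(x,\Phi)$ with $\Phi(h)=\tilde\Phi(h\circ\sigma)$ for $h\in\M_x$. So the task is: given $\tilde\Phi\in\tMmlog_{\tilde x}$ corresponding under Lemma \ref{lem:log-and-normal} (applied on $\tilde M$) to the data $(\tilde v;\tilde v_q,q\in Q)$, identify which vector $(v_k,k\in K)\in {N^\star_K}_{|C,x}$ corresponds to $\Phi$, where $v_k=\Phi(g_k)/(\text{trivialisation})$ computed from a generator $g_k$ of the ideal of $D_k$ at $x$.

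First I would fix $k\in K\setminus Q$. Then $g_k\circ\sigma=\tilde g_k\cdot\tilde g$ where $\tilde g$ generates the ideal of $E$ and $\tilde g_k$ generates the ideal of $\tilde D_k$; since $\tilde x\notin\tilde D_k$, $\tilde g_k$ is a unit at $\tilde x$, so $\tilde\Phi(g_k\circ\sigma)=\tilde g_k(\tilde x)\cdot\tilde\Phi(\tilde g)$. By Lemma \ref{lem:log-and-normal} on $\tilde M$, $\tilde\Phi(\tilde g)=\partial_{\tilde v}\tilde g(\tilde x)$ records exactly the vector $\tilde v\in\tilde\ta^*_{[v]}$, and the product $\tilde g_k(\tilde x)\cdot\partial_{\tilde v}\tilde g(\tilde x)$ is, via the arc-lifting description in Lemma \ref{lem:normalid}, precisely the $k$-th coordinate $\pi_k(v)$ of the vector $v=v(\tilde x,\tilde v)\in N_{M,C,x}$ pushed into $N_k$ --- here one uses that $g_k=s_k/\ell_k$, that the composite of the isomorphisms $N_{M,C}\xrightarrow{s'}\LLL_{|C}\to{L_k}_{|C}$ followed by $(s_k')^{-1}$ is exactly the projection $\pi_k:N_{M,C}\to{N_k}_{|C}$ by the commutative diagram \eqref{eqn:splitting}, and that arc-lifting commutes with taking derivatives (the secant-equals-tangent computation used to prove \eqref{eqn:tocanon}). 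This gives the first block of coordinates in \eqref{eqn:sigma_*}.

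Next, for $q\in Q$ we have $\tilde x\in\tilde D_q$, so we need $g_q$ a generator of the ideal of $D_q$ at $x$ and $g_q\circ\sigma=\tilde g_q\cdot\tilde g$ where now \emph{both} factors vanish at $\tilde x$. Here I would invoke Lemma \ref{lem:missing_link}: over $E\cap\tilde D_q$ the line bundle $\sigma^*N_q$ is canonically $\tilde N\otimes\tilde N_q$, and the statement of that lemma is precisely that the product rule $w_q=\tilde w\,\tilde w_q$ holds at the level of the normal-bundle vectors, independently of sections (provided $\sigma^*s_q=\tilde s\,\tilde s_q$). Applying $\tilde\Phi$, which is multiplicative, to the factorisation $g_q\circ\sigma=\tilde g_q\cdot\tilde g$ gives $\Phi(g_q)=\tilde\Phi(\tilde g_q)\cdot\tilde\Phi(\tilde g)$, which by Lemma \ref{lem:log-and-normal} on $\tilde M$ is $\tilde v_q\cdot\tilde v$ in the appropriate tensor product; translating through Lemma \ref{lem:missing_link} this is exactly the $q$-th coordinate $\tilde v\tilde v_q$ of \eqref{eqn:sigma_*}. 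Finally one checks the resulting vector lies in $N^\star_K$, i.e. no coordinate vanishes: for $k\in K\setminus Q$ this is because $\tilde x\notin\tilde D_k$ forces $\pi_k(v)\ne 0$ (equivalently $v\in\pi^{-1}(N^\star_{K\setminus Q})$ as noted before the statement), and for $q\in Q$ because $\tilde v\ne 0$ (as $\tilde v\in\tilde\ta^*$) and $\tilde v_q\ne 0$.

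I expect the main obstacle to be bookkeeping rather than conceptual: one must be careful that the identification of $\Mmlog_{|C}$ with ${N^\star_K}_{|C}$ and of $\tMmlog_{|\tildeDoQ}$ with $\tN^\star_{\tilde Q}$ are the \emph{canonical} ones from Corollary \ref{cor:log-and-normal} (not the section-dependent ones of Theorem \ref{thm:logspaces}), so that the formula is genuinely canonical; and that the compatibility of arc-lifting with the normal-bundle projection $\pi$ --- which is what makes $v=v(\tilde x,\tilde v)$ the right object --- is applied with the correct diagram \eqref{diag:name}. The case distinction $k\in K\setminus Q$ versus $q\in Q$ (one factor a unit, the other not) is the crux, and Lemma \ref{lem:missing_link} is doing the real work in the second case; everything else is diagram chasing.
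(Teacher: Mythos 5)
Your argument is correct and follows essentially the same route as the paper's proof: reduce to the generators $g_k=s_k/\ell_k$, use $\Phi(g_k)=\tilde\Phi(g_k\circ\sigma)$ with the factorisation $g_k\circ\sigma=\tilde g_k\cdot\tilde g$, handle $q\in Q$ via multiplicativity together with Lemma \ref{lem:missing_link} and \eqref{eqn:computation-wi}, and handle $k\in K\setminus Q$ by differentiating the factorisation along the arc and its lift from Lemma \ref{lem:normalid}, which is exactly the paper's computation $\partial_v g_k(x)=\tilde g_k(\tilde x)\,\partial_{\tilde v}\tilde g(\tilde x)$. The only cosmetic difference is your detour through the (section-dependent) splitting \eqref{eqn:splitting} to identify $\pi_k$, which is unnecessary since $\pi_k$ is the canonical projection of \eqref{eqn:sequence_of_NB}; this does not affect correctness.
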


 \begin{proof}
Recall that $\sigma^\mlog$ is defined by $\sigma^\mlog(\tilde \Phi) (g) 
= \tilde \Phi (g\circ \sigma)$. 
In order to determine $\Phi=\tilde \Phi (g\circ \sigma)$ at $x$, it suffices to consider only $g= g_k= s_k/\ell_k$ for $k\in K$ and to show that it corresponds, by Lemma \ref{lem:log-and-normal}, to $v_k= \pi_k(v)$ for $k\in K\setminus Q$ and 
$v_q = \tilde v\tilde v_q$ for $q\in Q$.  

For $q\in Q$, it follows from the definition of the product $\tilde v\tilde v_q$ given in 
the proof of Lemma \ref{lem:missing_link} and the 
formula $w_q=\Phi \left(\frac{s_q}{\ell_q}\right)\ell_q(x)$ showed in 
\eqref{eqn:computation-wi}.

For $k\in K\setminus Q$ we argue in the following manner. Let $\gamma(t)$ be the arc used in the proof of Lemma \ref{lem:normalid}.  Recall that $\gamma(0)=x$ and $\pi_{C,x}(\gamma'(0))=v$. The lifting $\tilde\gamma$ of $\gamma$ satisfies $\tilde\gamma(0)=\tilde x$ and $\pi_{E}(\tilde\gamma'(0))=\tilde v\in\tilde \ta^*_{\tilde x}$.  Differentiating 
$g_k(\gamma (t)) = \tilde g(\tilde \gamma (t)) \tilde g_k(\tilde \gamma (t)) $ and setting $t=0$, we get 
$$
\partial_{v_k} g_k (x)= \partial_{v} g_k (x) = \tilde g_k (\tilde x) \partial_{\tilde v} \tilde g (\tilde x) .
$$
Since $\tilde g_k (\tilde x) = \tilde \Phi (\tilde g_k)$ and  $\partial_{\tilde v} \tilde g (\tilde x) = \tilde \Phi (\tilde g)$, this shows the claim.  
\end{proof}

 For $Q\subsetneq K$, we define two maps: 
 $$H_{\tilde Q}^\mlog :  {\tN^\star_{\tilde Q~|\tildeDoQ}} \to 
 \pi^{-1} ({N^\star_{K\setminus Q}}_{|C}) \times_C
  {N^\star_{Q}}_{|C},$$
  given by $H_{\tilde Q}^\mlog (\tilde x, \tilde v; \tilde v_q, q\in Q) 
  = (v(\tilde x, \tilde v); v_q=\tilde v \tilde v_q, q\in Q)$, where $\pi$ is given by \eqref{eqn:sequence_of_NB}, and 
$$
h_{\tilde Q}^\mlog : \pi^{-1} ({N^\star_{K\setminus Q}}_{|C}) \times_C
  {N^\star_{Q}}_{|C} \to {N^\star_{K\setminus Q}}_{|C} \times_C
  {N^\star_{Q}}_{|C} = {N^\star_K}_{|C} ,$$
 induced by $\pi$; that is, $h_{\tilde Q}^\mlog (v;  v_q, q\in Q) = (\pi_k (v), k\in K\setminus Q; v_q, q\in Q)$. 

\begin{thm}\label{thm:details-blow-up-mot}
For $Q\subsetneq K$, the map $H_{\tilde Q}^\mlog $ is a bijection and the following diagram commutes  \\
$$
\begin{tikzcd}
\tMmlog_{|\tildeDoQ} \ar[r,"\simeq"] \ar[rd, "\sigma^\mlog" '] & {\tN^\star_{\tilde Q~|\tildeDoQ}} \ar[rd, "\sigma_N^\mlog"] \ar[rr, "H_{\tilde Q}^\mlog"] &               
&  \pi^{-1} ({N^\star_{K\setminus Q}}_{|C}) \times_C {N^\star_{Q}}_{|C}\arrow[ld, "h_{\tilde Q}^\mlog" ] \\
 & \Mmlog_{|C}\ar[r,"\simeq"]                                                         & {N^\star_K}_{|C}  &
\end{tikzcd} .
$$
\end{thm}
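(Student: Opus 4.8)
The plan is to construct the inverse of $H_{\tilde Q}^\mlog$ directly from the two canonical identifications already at hand — Lemma \ref{lem:normalid}, matching a nonzero $v\in\ta_{M,C}^*$ with the pair $([v],\tilde v)\in\tilde\ta^*$, and Lemma \ref{lem:missing_link}, giving the canonical factorisation $v_q=\tilde v\tilde v_q$ over $E\cap\tilde D_q$ — and then to read off commutativity of the two triangles from the definitions. The first ingredient I would record is a dictionary for the strata of $E$: under the identification $E\simeq\PP\ta_{M,C}$ of \eqref{eqn:canon}, a point $\tilde x=[v]$ lies in the strict transform $\tilde D_q$, for $q\in K$, if and only if $\pi_q(v)=0$, where $\pi=(\pi_k)_{k\in K}:\ta_{M,C}\to{N_K}_{|C}$ is the projection of \eqref{eqn:sequence_of_NB}. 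Indeed, $C$ is a nonsingular subvariety of the nonsingular $D_q$, so $\tilde D_q$ is the blowing-up of $D_q$ along $C$ and $\tilde D_q\cap E=\PP N_{D_q,C}$, embedded in $E=\PP N_{M,C}$ through the inclusion $i$ of \eqref{eqn:sequence_of_NB}, while $N_{D_q,C}=\ker\pi_q$. Hence $\tilde x\in\tildeDoQ$ if and only if $\pi_q(v)=0$ for all $q\in Q$ and $\pi_k(v)\neq0$ for all $k\in K\setminus Q$, i.e. if and only if $\pi(v)\in{N^\star_{K\setminus Q}}_{|C}$, the latter embedded in ${N_K}_{|C}$ by the zero section on the $Q$-factors; this in particular confirms that $H_{\tilde Q}^\mlog$ takes values in $\pi^{-1}({N^\star_{K\setminus Q}}_{|C})\times_C{N^\star_{Q}}_{|C}$.

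To prove that $H_{\tilde Q}^\mlog$ is bijective I would exhibit the inverse. Fix $x\in C$ and $(v;v_q,q\in Q)\in\pi^{-1}({N^\star_{K\setminus Q}}_{|C})\times_C{N^\star_{Q}}_{|C}$. Since $Q\subsetneq K$, some $\pi_k(v)\neq0$, so $v\neq0$ and Lemma \ref{lem:normalid} produces a well-defined pair $([v],\tilde v)\in\tilde\ta^*$; by the dictionary above $\tilde x\coloneq[v]\in\tildeDoQ$. For each $q\in Q$ the point $\tilde x$ lies in $E\cap\tilde D_q$, so Lemma \ref{lem:missing_link} provides the canonical isomorphism $\sigma^*N_q\simeq\tilde N\otimes\tilde N_q$ at $\tilde x$; as $\tilde v\neq0$, the map $\tilde v_q\mapsto\tilde v\tilde v_q$ is a bijection $\tilde N^*_{q,\tilde x}\to N^*_{q,\tilde x}$, so there is a unique $\tilde v_q\in\tilde N^*_{q,\tilde x}$ with $v_q=\tilde v\tilde v_q$. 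The assignment $(v;v_q,q\in Q)\mapsto(\tilde x,\tilde v;\tilde v_q,q\in Q)$ is then, by construction, a two-sided inverse of $H_{\tilde Q}^\mlog$, proving bijectivity. By the remark following the set-up, all of this is read stratum by stratum over $C\cap\Do_R$, $R\subset L$, with no change.

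For the commutativity of the diagram, the left triangle — that $\sigma^\mlog$ corresponds under the vertical bijections to $\sigma_N^\mlog$ — is exactly Proposition \ref{prop:sigma_*}. The right triangle is immediate from the definitions: for $(\tilde x,\tilde v;\tilde v_q,q\in Q)$ with $v=v(\tilde x,\tilde v)$,
$$h_{\tilde Q}^\mlog\big(H_{\tilde Q}^\mlog(\tilde x,\tilde v;\tilde v_q,q\in Q)\big)=h_{\tilde Q}^\mlog\big(v;\tilde v\tilde v_q,q\in Q\big)=\big(\pi_k(v),k\in K\setminus Q;\ \tilde v\tilde v_q,q\in Q\big),$$
which is exactly $\sigma_N^\mlog(\tilde x,\tilde v;\tilde v_q,q\in Q)$ by \eqref{eqn:sigma_*}, so $\sigma_N^\mlog=h_{\tilde Q}^\mlog\circ H_{\tilde Q}^\mlog$ and the diagram commutes. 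The only genuinely delicate point — the main, essentially bookkeeping, obstacle — is to keep track of which canonical trivialisation each product $\tilde v\tilde v_q$ is formed in (that of Lemma \ref{lem:missing_link}, valid precisely over $E\cap\tilde D_q$, so that the uniqueness of $\tilde v_q$ makes sense); once that is pinned down, everything follows directly from Lemmas \ref{lem:normalid} and \ref{lem:missing_link} and Proposition \ref{prop:sigma_*}.
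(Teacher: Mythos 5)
Your proposal is correct and follows essentially the same route as the paper: commutativity of the left triangle is Proposition \ref{prop:sigma_*}, the right triangle is a direct computation from the definitions of $H_{\tilde Q}^\mlog$ and $h_{\tilde Q}^\mlog$, and bijectivity (which the paper dismisses as obvious) is exactly the inverse construction implicit in the discussion preceding the theorem, via Lemma \ref{lem:normalid} and Lemma \ref{lem:missing_link}. Your explicit verification that $[v]\in\tildeDoQ$ precisely when $\pi(v)\in{N^\star_{K\setminus Q}}_{|C}$ is a welcome filling-in of the paper's unproved remark before the statement.
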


\begin{proof}
The commutativity of the diagram is a consequence of the formula 
\begin{align}
\sigma^\mlog (\tilde x, \tilde v; \tilde v_q, q\in Q)=((\pi_k(v))_{k\in K\setminus Q},v_q=\tilde v_q\cdot\tilde v),
\end{align}
which follows from Proposition \ref{prop:sigma_*}. 

To see that $H_{\tilde Q}^\mlog $ is a bijection, we construct its inverse. Let $(v; v_q, q\in Q) \in 
\pi^{-1} ({N^\star_{K\setminus Q}}_{|C}) \times_C
  {N^\star_{Q}}_{|C}$.  Then $\pi(v)=(v_k,\,k\in K)$ and the coordinates $v_k=\pi_k(v)$ for $k\in K\setminus Q$ are all  non-zero. 
By Lemma \ref{lem:normalid}, $v$ gives  $([v],\tilde v)\in\tilde \ta^*$.  Denote $\tilde x\coloneq [v]$.  Because $\tilde x \in \tildeDoQ$, the 
$q$-coordinates $\pi(v)$, for $q\in Q$, all vanish. That is why we have to chose different numbers $(v_q)_{q\in Q} \in {N^\star_{Q}}_{|C}$, independent of $v$, in order for $H_{\tilde Q}^\mlog $ to be bijective.  
Now $(v; v_q, q\in Q)$ gives $(\tilde x, \tilde v; \tilde v_q, q\in Q) $ by the identity $(v(\tilde x, \tilde v); v_q=\tilde v \tilde v_q, q\in Q)$.  
\end{proof}

Note that  each fibre of $h_{\tilde Q}^\mlog$ is a fibre of $\pi$.   
 This allows us to trivialise $h_{\tilde Q}^\mlog$ locally for the Zariski topology.  
The trivialising map  
$$\tr_{\tilde Q}^\mlog : \pi^{-1} ({N^\star_{K\setminus Q}}_{|C}) \times_C
  {N^\star_{Q}}_{|C}\to{N^\star_K}_{|C}  \times_C N_{D_K,C}$$ 
  is defined by 
  $$\tr_{\tilde Q}^\mlog (v; v_q, q\in Q) = 
  (\pi_k(v), k\in K\setminus Q; v_q, q\in Q,i^* (v)),
  $$
 where $i^*$ is defined by the splitting \eqref{eqn:splitting}. 
 Recall that, in general, this splitting is only defined locally. 

\begin{cor}\label{cor:details-blow-up-mot1}
Let $Q\subsetneq K$.  The composition map 
$S_{\tilde Q}^\mlog = \tr_{\tilde Q}^\mlog \circ H_{\tilde Q}^\mlog$, defined over 
a Zariski open $U\subset C$ over which the splitting \eqref{eqn:splitting} exists,
is a bijection making the following diagram commutative 
$$
\begin{tikzcd}
\tMmlog_{|\tildeDoQ\cap {\sigma^{-1}(U)} } \ar[r,"\simeq"] \ar[rd, "\sigma^\mlog" '] & {\tN^\star_{\tilde Q~|{\tildeDoQ\cap {\sigma^{-1}(U)} }}} \ar[rd, "\sigma_N^\mlog"]  \ar[rr, "S_{\tilde Q}^\mlog"] & &  {N^\star_K}_{|U}  \times_C {N_{D_K,C}}_{|U}  \arrow[ld, "pr_1" ] \\
& \Mmlog_{|U} \ar[r,"\simeq"] & {N^\star_K}_{|U} & 
\end{tikzcd} .
$$ 
\end{cor}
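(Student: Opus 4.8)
The plan is to obtain the statement as a formal consequence of Theorem \ref{thm:details-blow-up-mot}, so that no new geometric input is needed beyond the elementary properties of the splitting \eqref{eqn:splitting}. Theorem \ref{thm:details-blow-up-mot} already tells us that $H_{\tilde Q}^\mlog$ is a bijection and that, under the identifications $\tMmlog_{|\tildeDoQ}\simeq{\tN^\star_{\tilde Q~|\tildeDoQ}}$ and $\Mmlog_{|C}\simeq{N^\star_K}_{|C}$, the composite $h_{\tilde Q}^\mlog\circ H_{\tilde Q}^\mlog$ is $\sigma_N^\mlog$, which in turn represents $\sigma^\mlog$. So it only remains to analyse $tr_{\tilde Q}^\mlog$ and to check the two triangles of the diagram.

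First I would verify that $tr_{\tilde Q}^\mlog$ is a bijection over any Zariski open $U\subset C$ where the splitting \eqref{eqn:splitting} exists. An element of its source $\pi^{-1}({N^\star_{K\setminus Q}}_{|U})\times_C{N^\star_Q}_{|U}$ is a pair $(v;v_q,q\in Q)$ with $v\in N_{M,C}$ satisfying $\pi_q(v)=0$ for $q\in Q$ and $\pi_k(v)\ne 0$ for $k\in K\setminus Q$. Over $U$ the splitting identifies $N_{M,C}$ with $N_{D_K,C}\oplus{N_K}_{|C}$, hence $v$ is recovered from the pair $\big(i^*(v),(\pi_k(v))_{k\in K\setminus Q}\big)$, the remaining coordinates of $\pi(v)$ being zero. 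Since $tr_{\tilde Q}^\mlog$ sends $(v;v_q)$ to $\big((\pi_k(v))_{k\in K\setminus Q},(v_q)_{q\in Q};i^*(v)\big)$, its inverse is given by sending $\big((\pi_k)_{k\in K\setminus Q},(v_q)_{q\in Q};w\big)$ to $(v;v_q,q\in Q)$ with $v=\pi^*\big((\pi_k)_{k\in K\setminus Q},0\big)+i(w)$; the condition $\pi_k\ne 0$ guarantees $v\in\pi^{-1}(N^\star_{K\setminus Q})$. Thus $S_{\tilde Q}^\mlog=tr_{\tilde Q}^\mlog\circ H_{\tilde Q}^\mlog$ is a composition of bijections, hence a bijection.

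It remains to check commutativity. The left triangle is exactly the left triangle of Theorem \ref{thm:details-blow-up-mot} restricted over $U$. For the right triangle, comparing the defining formulae shows $pr_1\circ tr_{\tilde Q}^\mlog=h_{\tilde Q}^\mlog$, since both send $(v;v_q,q\in Q)$ to $(\pi_k(v),k\in K\setminus Q;v_q,q\in Q)$; combining this with the outer commutativity $h_{\tilde Q}^\mlog\circ H_{\tilde Q}^\mlog=\sigma_N^\mlog$ from Theorem \ref{thm:details-blow-up-mot} and the identification $\Mmlog_{|U}\simeq{N^\star_K}_{|U}$ yields $pr_1\circ S_{\tilde Q}^\mlog=\sigma^\mlog$, which is the asserted commutativity. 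I do not expect a genuine obstacle here; the only point requiring care is that, unlike in Theorem \ref{thm:details-blow-up-mot}, the splitting \eqref{eqn:splitting} and hence $tr_{\tilde Q}^\mlog$ and $S_{\tilde Q}^\mlog$ live only over the open set $U$, so the bijectivity and the inverse formula above are to be understood fibrewise over that $U$.
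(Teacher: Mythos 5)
Your proposal is correct and follows essentially the route the paper intends: the corollary is obtained by combining Theorem \ref{thm:details-blow-up-mot} with the observation that the fibres of $h_{\tilde Q}^\mlog$ are fibres of $\pi$, so that the splitting \eqref{eqn:splitting} over $U$ makes $tr_{\tilde Q}^\mlog$ a bijection with the explicit inverse $v=i(w)+\pi^*\big((\pi_k)_{k\in K\setminus Q},0\big)$, and $pr_1\circ tr_{\tilde Q}^\mlog=h_{\tilde Q}^\mlog$ gives the commutativity. Your fibrewise verification over $U$ is exactly the content the paper leaves implicit, so nothing is missing.
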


\begin{rmk}
Note that the target space of $S_{\tilde Q}^\mlog$ is independent of $Q$. We will see in the proof of the independence of the resolution of the image of $\Mmlog $ in the Grothendieck ring , Theorem  \ref{thm:invSf}, how these terms, equipped with the correct signs, cancel out.   
\end{rmk}

If $Q=K$, then the situation is even simpler since, in this case $\pi (v)=0$, and, therefore, by \eqref{eqn:sequence_of_NB}, $v \in N^*_{D_K,C} $.  In this case we define
 $$S_{\tilde K}^\mlog : {\tN^\star_{\tilde K~|\tildeDoK}} \to  
  {N^\star_K}_{|C} \times_C N^*_{D_K,C},$$
  by $S_{\tilde K}^\mlog (\tilde x, \tilde v; \tilde v_k, k\in K) 
  = (v_k=\tilde v \tilde v_k, k\in K; v(\tilde x, \tilde v))$. Then we obtain easily

\begin{prop}\label{prop:details-blow-up-mot2}
$S_{\tilde K}^\mlog $ is a bijection making the following diagram commutative  \\
$$
\begin{tikzcd}
\tMmlog_{|\tildeDoK}  \ar[r,"\simeq"]  \ar[rd, "\sigma^\mlog" '] & {\tN^\star_{\tilde K~|\tildeDoK}} \ar[rd, "\sigma_N^\mlog"]
  \ar[rr, "S_{\tilde K}^\mlog"]& & {N^\star_K}_{|C}  \times_C N^*_{D_K,C}
 \arrow[ld, "pr_1" ] \\
& \Mmlog_{|C} \ar[r,"\simeq"] & {N^\star_K}_{|C} & 
\end{tikzcd} .
$$ 
\end{prop}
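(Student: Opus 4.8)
The plan is to read this statement off from Proposition \ref{prop:sigma_*} together with Lemmas \ref{lem:normalid} and \ref{lem:missing_link}, following the pattern of the proof of Theorem \ref{thm:details-blow-up-mot} but in the degenerate case $Q=K$, where $K\setminus Q=\varnothing$. First I would specialise the formula \eqref{eqn:sigma_*} of Proposition \ref{prop:sigma_*} to $Q=K$: the component indexed by $K\setminus Q$ is then empty, and one is left with
$$
\sigma_N^\mlog(\tilde x,\tilde v;\tilde v_k,k\in K)=(\,\tilde v\tilde v_k,\ k\in K\,),
$$
which is by definition $\pr_1\circ S_{\tilde K}^\mlog$. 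With the vertical arrows taken to be the identifications of Corollary \ref{cor:log-and-normal}, the fact that $\sigma^\mlog$ corresponds to $\sigma_N^\mlog$ is exactly the content of Proposition \ref{prop:sigma_*}, so this already gives the commutativity of the whole diagram, and only the bijectivity of $S_{\tilde K}^\mlog$ remains.

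For bijectivity I would first record that over the smallest stratum the canonical identification \eqref{eqn:canon} restricts to $\tildeDoK\simeq\PP N^*_{D_K,C}$: a point $[v]\in E$ lies on the strict transform $\tilde D_k$ precisely when $\pi_k(v)=0$, hence on $\bigcap_{k\in K}\tilde D_k$ precisely when $\pi(v)=0$, i.e.\ when $v$ lies in the image of $i\colon N_{D_K,C}\hookrightarrow N_{M,C}$. Combined with Lemma \ref{lem:normalid}, this shows that $(\tilde x,\tilde v)\mapsto v(\tilde x,\tilde v)$ is a bijection from $\tN^*$ restricted to $\tildeDoK$ onto $N^*_{D_K,C}$, along which $\pi(v)=0$. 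To invert $S_{\tilde K}^\mlog$ one then starts from $(v_k,k\in K;v)$ in the target, recovers $\tilde x=[v]\in\tildeDoK$ and $\tilde v$ from $v$ by Lemma \ref{lem:normalid}, and for each $k\in K$ uses the canonical isomorphism $\sigma^*N_k\simeq\tN\otimes\tN_k$ over $E\cap\tilde D_k$ of Lemma \ref{lem:missing_link} to solve $\tilde v\otimes\tilde v_k=v_k$ uniquely for $\tilde v_k$; this is possible because $\tilde v$ generates the line $\tN_{\tilde x}$, and $\tilde v_k\neq0$ since $v_k\neq0$. A direct comparison of the two formulas shows that this assignment is a two-sided inverse of $S_{\tilde K}^\mlog$.

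I do not anticipate any genuine obstacle here: the substantive work is already contained in Proposition \ref{prop:sigma_*} and in Lemmas \ref{lem:normalid} and \ref{lem:missing_link}, and the case $Q=K$ is precisely the one in which the $K\setminus Q$ bookkeeping disappears and $v$ is forced to be a nonzero vector of $N_{D_K,C}$. The only points requiring a word of care are that $\tildeDoK$ is nonempty exactly when $\dim C<\dim D_K$ (so that $N_{D_K,C}$ has positive rank and $\PP N^*_{D_K,C}\to C$ is onto), and that, in keeping with the Remark suppressing the index set $L$, all the identifications above are understood stratum by stratum over the pieces $C\cap\Do_R$ for $R\subset L$.
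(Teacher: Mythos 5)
Your proposal is correct and follows the route the paper intends: commutativity is the $Q=K$ specialisation of Proposition \ref{prop:sigma_*} (via the identifications of Corollary \ref{cor:log-and-normal}), and bijectivity comes from Lemma \ref{lem:normalid} together with the canonical isomorphism $\sigma^*N_k\simeq\tilde N\otimes\tilde N_k$ of Lemma \ref{lem:missing_link}, using that over $\tildeDoK$ one has $\pi(v)=0$ so $v\in N^*_{D_K,C}$. The paper leaves these details implicit ("we obtain easily"), and your write-up, including the caveats about $\dim C<\dim D_K$ and the suppressed index set $L$, simply makes the same argument explicit.
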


\subsection{Topological case: $\Mlog \simeq \Mtop$}\label{ss:top} 
First, dividing by the action of powers of $\Rp$, we obtain $\log$ versions of Theorem \ref{thm:details-blow-up-mot}, Corollary \ref{cor:details-blow-up-mot1}, and Proposition \ref{prop:details-blow-up-mot2}.  
Then we show in Corollary \ref{cor:glued-blow-up-top1} that, in this case, the local trivialisations, given stratum by stratum in Corollary \ref{cor:details-blow-up-top1} and Proposition \ref{prop:details-blow-up-top2}, can be glued by a simple formula to one local trivialisation that is a homeomorphism.

By Corollary \ref{cor:log-and-normal},
$$
\Mlog_{|C} \simeq \Sph^\star({N_K}_{|C}), \quad\text{ and }\quad \tMlog_{|\tildeDoQ} \simeq \Sph^\star({{\tN}_{\tilde Q~|\tildeDoQ}}).
$$

Dividing by $\Rp$, the bijection of Lemma \ref{lem:normalid} induces a bijection 
$$
\Sph(\tilde N) \ni (\tilde x ,\tilde \theta) = ([v],\theta(\tilde v)) \leftrightarrow \theta(v) \in \Sph(N). $$ 
 Because $\theta(v)$ depends only on 
 $ (\tilde x=[v] ,\tilde \theta = \theta(v))$, we also denote it by  
$\theta (\tilde x, \tilde \theta)$ and, similarly, $\theta(\pi_k(v))$ by  
$\theta_k (\tilde x, \tilde \theta)$.   

 \begin{prop}\label{prop:logsigma_*}
The morphism   $\sigma^\log : \tMlog_{|\tildeDoQ} \to  \Mlog_{|C}$ induces a map $\sigma_N^\log:\Sph^\star({\tN_{\tilde Q~|\tildeDoQ}})   \to \Sph^\star({N_K}_{|C})$ given by 
\begin{align}\label{eqn:logsigma_*}
\sigma_N^\log (\tilde x, \tilde \theta; \tilde \theta_q, q\in Q)=(  
\theta_k (\tilde x, \tilde \theta), {k\in K\setminus Q}; 
\tilde \theta\tilde \theta_q, q\in Q) .
\end{align}
\end{prop}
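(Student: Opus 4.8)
The plan is to obtain this statement from the algebraic case, Proposition~\ref{prop:sigma_*}, simply by passing to the quotients by the relevant $\Rp$-actions, exactly as announced at the beginning of \S\ref{ss:top}. Recall from the monoid diagram~\eqref{eq:mono} that the arrow $\C^*\to S^1$, $z\mapsto z/|z|$, is the quotient by $\Rp$, and that post-composing a monoid morphism with it is precisely the canonical projection $\Mmlog\to\Mlog$ (the arrow labelled $/\Rp$ in the diagram following~\eqref{eq:mono}). Since $\sigma:\tMD\to\MD$ satisfies $\sigma^{-1}(D)\subset\tilde D$, the functoriality recalled in \S\ref{ss:functor} applies, and $\sigma^\mlog$, $\sigma^\log$ are defined by pre-composition with $\sigma$ ($g\mapsto g\circ\sigma$); pre-composition commutes with the post-composition defining $/\Rp$, so the square with vertices $\tMmlog_{|\tildeDoQ}$, $\Mmlog_{|C}$, $\tMlog_{|\tildeDoQ}$, $\Mlog_{|C}$, horizontal arrows $\sigma^\mlog,\sigma^\log$ and vertical arrows the $\Rp$-quotients, commutes. (Concretely, both $|\sigma^\mlog\tilde\Phi|^{-1}\sigma^\mlog\tilde\Phi$ and $\sigma^\log(|\tilde\Phi|^{-1}\tilde\Phi)$ send $g$ to $|\tilde\Phi(g\circ\sigma)|^{-1}\tilde\Phi(g\circ\sigma)$.)

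Next I would transport this square through the identifications of Corollary~\ref{cor:log-and-normal}. Under the bijection of Lemma~\ref{lem:log-and-normal}, scaling a vector $v_k\in N_k^*$ by $\lambda\in\Rp$ scales the corresponding morphism $\Phi_k$ by $\lambda$ (because $\partial_{\lambda v_k}g=\lambda\,\partial_{v_k}g$), so the projection $\Mmlog_{|C}\simeq{N^\star_K}_{|C}\to\Mlog_{|C}\simeq\Sph^\star({N_K}_{|C})$ is exactly $(v_k)_{k\in K}\mapsto(\theta(v_k))_{k\in K}$, and likewise for $\tilde M$ and $\tildeDoQ$. Hence $\sigma_N^\log$ is the unique map making the resulting square with $\sigma_N^\mlog$ and these two quotient maps commute, and its formula is obtained by applying $\theta$ coordinatewise to the right-hand side of~\eqref{eqn:sigma_*}.

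To finish, I would record the two homogeneity facts that make this reduction meaningful. First, scaling $v\in N_{M,C}^*$ by $\lambda\in\Rp$ scales both $\pi_k(v)$ and $\tilde v$ (the image of $v$ under Lemma~\ref{lem:normalid}) by $\lambda$ while fixing $[v]$; consequently $\theta(\pi_k(v))$ depends only on $\tilde x=[v]$ and $\tilde\theta=\theta(\tilde v)$, which is exactly the observation preceding the statement and legitimises the notation $\theta_k(\tilde x,\tilde\theta)$. Second, for $q\in Q$ the product $\tilde v\,\tilde v_q\in N_q^*$ defined in the proof of Lemma~\ref{lem:missing_link} via $\sigma^*N_q\simeq\tilde N\otimes\tilde N_q$ is bilinear in $(\tilde v,\tilde v_q)$, so $\theta(\tilde v\,\tilde v_q)=\theta(\tilde v)\,\theta(\tilde v_q)=\tilde\theta\,\tilde\theta_q$. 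Feeding the formula $\sigma_N^\mlog(\tilde x,\tilde v;\tilde v_q,q\in Q)=(\pi_k(v),k\in K\setminus Q;\tilde v\,\tilde v_q,q\in Q)$ of~\eqref{eqn:sigma_*} into the quotient map then yields precisely~\eqref{eqn:logsigma_*}.

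A cleaner-looking alternative, should the quotient bookkeeping become cumbersome, is to rerun the proof of Proposition~\ref{prop:sigma_*} verbatim with $S^1$ in place of $\C^*$: write $\sigma^\log(\tilde\varphi)(g)=\tilde\varphi(g\circ\sigma)$, test on $g=g_k=s_k/\ell_k$ for $k\in K$, use for $q\in Q$ the $S^1$-version of~\eqref{eqn:computation-wi}, and for $k\in K\setminus Q$ differentiate $g_k(\gamma(t))=\tilde g(\tilde\gamma(t))\,\tilde g_k(\tilde\gamma(t))$ along the arc $\gamma$ of Lemma~\ref{lem:normalid} and pass to arguments. Either way the only point needing care is the homogeneity check above, which is immediate from the constructions; I therefore expect the argument to be very short, the substance having already been carried out in Proposition~\ref{prop:sigma_*}.
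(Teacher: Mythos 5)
Your proposal is correct and follows essentially the same route as the paper: Proposition~\ref{prop:logsigma_*} is stated there without a separate proof precisely because, as announced at the start of \S\ref{ss:top}, it is obtained from Proposition~\ref{prop:sigma_*} by dividing by the $\Rp$-actions, using the compatibility of the identifications of Lemma~\ref{lem:log-and-normal} and Corollary~\ref{cor:log-and-normal} with these quotients. Your homogeneity checks (that $\theta(\pi_k(v))$ depends only on $(\tilde x,\tilde\theta)$ and that $\theta(\tilde v\,\tilde v_q)=\tilde\theta\,\tilde\theta_q$) are exactly the points the paper records just before the statement, so nothing is missing.
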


  For $Q\subsetneq K$, we define two maps: 
 $$
 H_{\tilde Q}^\log :  \Sph^\star({\tN_{\tilde Q~|{\tildeDoQ}}}) \to \Sph (\pi^{-1} ({N^\star_{K\setminus Q}}_{|C})) \times_C
  \Sph^\star({N_{Q}}_{|C}),$$
  given by $H_{\tilde Q}^\log (\tilde x, \tilde \theta; \tilde \theta_q, q\in Q) 
  = (\theta(\tilde x, \tilde \theta); \theta_q=\tilde \theta \tilde \theta_q, q\in Q)$, and, 
\begin{align}\label{eqn:hlogKQ}
h_{\tilde Q}^\log : \Sph (\pi^{-1} ({N^\star_{K\setminus Q}}_{|C})) \times_C
 \Sph^\star({N_{Q}}_{|C}) \to \Sph^\star({N_{K\setminus Q}}_{|C}) \times_C
  \Sph^\star({N_{Q}}_{|C}) = \Sph^\star({N_K}_{|C}),
  \end{align}
defined by  
$h_{\tilde Q}^\log (\theta(v);  \theta_q, q\in Q) = (\theta(\pi_k (v)), k\in K\setminus Q; \theta_q, q\in Q)$. 

\begin{thm}\label{thm:details-blow-up-log}
For $Q\subsetneq K$, $H_{K,\tilde Q}^\log $ is a bijection and the following diagram commutes  \\
$$
\begin{tikzcd}
 \tMlog_{|\tildeDoQ }  \ar[r,"\simeq"] \ar[rd, "\sigma^\log" '] & \Sph^\star({\tN_{\tilde Q~|\tildeDoQ}}) 
 \ar[rd, "\sigma_N^\log"] \ar[rr, "H_{\tilde Q}^\log"]   & &  \Sph({(\pi^{-1} (N^\star_{K\setminus Q}))}_{|C}) \times_C
  \Sph^\star({N_{Q}}_{|C})
 \arrow[ld, "h_{\tilde Q}^\log" ] \\
&\Mlog  \ar[r,"\simeq"]  & \Sph^\star({N_K}_{|C}) & 
\end{tikzcd} .
$$
\end{thm}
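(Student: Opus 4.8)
The plan is to derive Theorem~\ref{thm:details-blow-up-log} from its algebraic counterpart, Theorem~\ref{thm:details-blow-up-mot}, by passing to quotients by the relevant actions of powers of $\Rp$. The point is that for a line bundle the sphere bundle is the fibrewise quotient by $\Rp$; hence, under the identifications of Corollary~\ref{cor:log-and-normal}, the space $\tMlog_{|\tildeDoQ}\simeq\Sph^\star(\tN_{\tilde Q~|\tildeDoQ})$ is the quotient of $\tMmlog_{|\tildeDoQ}\simeq\tN^\star_{\tilde Q~|\tildeDoQ}$ by the $(\Rp)^{\tilde Q}$-action rescaling independently the $\tN$-coordinate $\tilde v$ and each $\tilde N_q$-coordinate $\tilde v_q$, $q\in Q$; similarly $\Sph((\pi^{-1}(N^\star_{K\setminus Q}))_{|C})\times_C\Sph^\star({N_{Q}}_{|C})$ is the quotient of $\pi^{-1}({N^\star_{K\setminus Q}}_{|C})\times_C{N^\star_{Q}}_{|C}$ by the $\Rp\times(\Rp)^{Q}$-action rescaling $v\in N_{M,C}$ diagonally and each $v_q$ separately, and $\Sph^\star({N_K}_{|C})$ is the quotient of ${N^\star_K}_{|C}$ by $(\Rp)^{K}$. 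So the whole $\log$-diagram should arise from the $\mlog$-diagram of Theorem~\ref{thm:details-blow-up-mot} by dividing source and target by matching $(\Rp)^{|Q|+1}$-actions (note that $h_{\tilde Q}^\log$, unlike $H_{\tilde Q}^\log$, will not be bijective, and the theorem does not claim it to be).

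First I would record how Lemma~\ref{lem:normalid} behaves under real rescaling: since $\tN$ is canonically the tautological bundle over $\PP\ta$, rescaling $v\mapsto\mu v$, $\mu\in\Rp$, fixes $[v]\in\PP\ta$ and rescales the associated vector $\tilde v\in\tN^*_{[v]}$ by $\mu$; this is immediate from the arc description in Lemma~\ref{lem:normalid}, replacing $\gamma(t)$ by $\gamma(\mu t)$. This makes the induced bijection $\Sph(\tN)\ni([v],\theta(\tilde v))\leftrightarrow\theta(v)\in\Sph(N)$ — and hence the maps $\theta(\tilde x,\tilde\theta)$ and $\theta_k(\tilde x,\tilde\theta)$ entering Proposition~\ref{prop:logsigma_*} and \eqref{eqn:hlogKQ} — well-defined.

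Then I would verify that $H_{\tilde Q}^\mlog$ intertwines the two actions above up to an invertible reparametrisation: the source action $(\lambda_0;\lambda_q)$ sends $H_{\tilde Q}^\mlog(\tilde x,\tilde v;\tilde v_q)=(v;v_q=\tilde v\tilde v_q)$ to $(\lambda_0 v;\lambda_0\lambda_q v_q)$, which is the target action with parameters $(\mu=\lambda_0;\mu_q=\lambda_0\lambda_q)$, and $(\lambda_0;\lambda_q)\mapsto(\lambda_0;\lambda_0\lambda_q)$ is a bijection of $(\Rp)^{\tilde Q}$. Since $H_{\tilde Q}^\mlog$ is a bijection by Theorem~\ref{thm:details-blow-up-mot}, it descends to a bijection between the quotients, and by the defining formulae this descended map is precisely $H_{\tilde Q}^\log$. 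The analogous, easier equivariance of $h_{\tilde Q}^\mlog$ — using $\pi_k(\lambda v)=\lambda\pi_k(v)$ — and of $\sigma^\mlog$ by its formula in Proposition~\ref{prop:sigma_*} then carry the commutative triangle of Theorem~\ref{thm:details-blow-up-mot} onto the one in the statement, the two slanted arrows becoming $\sigma^\log$ (identified with $\sigma_N^\log$ via Corollary~\ref{cor:log-and-normal} and Proposition~\ref{prop:logsigma_*}) and $h_{\tilde Q}^\log$; alternatively one reads off $\sigma_N^\log=h_{\tilde Q}^\log\circ H_{\tilde Q}^\log$ directly against \eqref{eqn:logsigma_*}.

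The step I expect to be the main obstacle — more a matter of care than of depth — is the bookkeeping of which copy of $\Rp$ acts on which coordinate: one must be sure that the reparametrisation matching the source and target actions is genuinely bijective, so that $H_{\tilde Q}^\mlog$ descends to a bijection and not merely to a surjection, and one must pin down the compatibility of Lemma~\ref{lem:normalid} with real rescaling recorded above. Once these are in place, the statement is a formal consequence of Theorem~\ref{thm:details-blow-up-mot}.
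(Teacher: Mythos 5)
Your proposal is correct and follows essentially the same route as the paper: the paper states the $\log$ versions of Theorem~\ref{thm:details-blow-up-mot}, Corollary~\ref{cor:details-blow-up-mot1}, and Proposition~\ref{prop:details-blow-up-mot2} are obtained by "dividing by the action of powers of $\Rp$" and gives no further detail, so your proof supplies exactly the bookkeeping the paper leaves implicit (the equivariance of $H_{\tilde Q}^\mlog$ with respect to the reparametrisation $(\lambda_0;\lambda_q)\mapsto(\lambda_0;\lambda_0\lambda_q)$ of $(\Rp)^{\tilde Q}$, and the $\Rp$-compatibility of the canonical bijection of Lemma~\ref{lem:normalid}).
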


Let us compute the fibres of $h_{\tilde Q}^\log$. For this, it suffices to look at the map 
$$\beta : 
\Sph (\pi^{-1} ({N^\star_{K\setminus Q}}_{|C}))  \to \Sph^\star({N_{K\setminus Q}}_{|C})$$
 given by the first coordinates of \eqref{eqn:hlogKQ}. Consider the following diagram where all the normal bundles are restricted to $C$:
$$
\begin{tikzcd}
0 \arrow[r] & N_{D_K,C} \arrow[r] & N_{M,C} \arrow[r, "\pi"] & 
N_K  \arrow[r] & 0 \\
& & \pi^{-1} ( N_{K\setminus Q}^\star ) 
\arrow[hookrightarrow]{u} \arrow[r] 
 \arrow[d] & N_{K\setminus Q}^\star 
\arrow[hookrightarrow]{u} \arrow[d, "\alpha"] \\
& &  \Sph  (\pi^{-1} (N_{K\setminus Q}^\star ) ) 
\arrow[r, "\beta"] &  \Sph^\star(N_{K\setminus Q}^\star)
\end{tikzcd}
$$
The map $\alpha$ is given by dividing each $N_k^*$ by $\Rp$.  Therefore 
every fibre of $\alpha$ can be identified with 
$\fprod {C} {{k\in K\setminus Q}} N_{k,>0}$ via the projections $N_k^* \to (N_k^* \mod S^1) \coloneq N^*_{k,>0}$.  
Using the spliting in \eqref{eqn:splitting}, we may identify 
every fibre of $\beta$, and hence every fibre of $h_{\tilde Q}^\top$, with 
$\Sph\big(N_{D_K,C}\times_C \displaystyle\fprod{C} 
{k\in K\setminus Q} N_{k,>0}\big )_{|C} $.    
This induces the map  
$$
\tr_{\tilde Q}^\log : \Sph({(\pi^{-1} (N^\star_{K\setminus Q}))}_{|C}) \times_C
  \Sph^\star({N_{Q}}_{|C})\to \Sph^\star({N_K}_{|C})  \times_C \Sph\big(N_{D_K,C}\times_C 
  \displaystyle\fprod{C} 
{k\in K\setminus Q} N_{k,>0}\big )_{|C},$$ 
   which trivialises $h_{\tilde Q}^\log$ 
locally for the  Zariski topology and is defined by 
$$
\tr_{\tilde Q}^\log (\theta(v);  \theta_q, q\in Q) = 
(h_{\tilde Q}^\log (\theta(v),   \theta_q, q\in Q), 
(i^* (v); r_k(v) , k\in K\setminus Q) \mod \Rp ),$$
where, recall,  $r_k(v)\coloneq\pi_k(v) \mod S^1$.

\begin{cor}\label{cor:details-blow-up-top1}
Let $Q\subsetneq K$. Over a Zariski open subset $U\subset C$ for which the splitting from \eqref{eqn:splitting} exists, the composition map $S_{\tilde Q}^\log = \tr_{\tilde Q}^\log \circ H_{\tilde Q}^\log$ is a bijection making the following diagram commutative over $U$:
$$
\begin{tikzcd}
\tMlog_{|\tildeDoQ }  \ar[r,"\simeq"] \ar[rd, "\sigma^\log" '] & \Sph^\star ({\tN_{\tilde Q~|\tildeDoQ}})\ar[rd, "\sigma_N^\log"] 
  \ar[rr, "S_{\tilde Q}^\log"]& &
\Sph^\star (N_K)  \times_U \Sph\big(N_{D_K,C}\times_U  
  \displaystyle\fprod{U} 
{k\in K\setminus Q} N^*_{k,>0}\big )
 \arrow[ld, "pr_1" ] \\
&\Mlog  \ar[r,"\simeq"] & \Sph^\star (N_K) & 
\end{tikzcd} .
$$ 
\end{cor}

Similar to the algebraic case, if $Q=K$, then the situation becomes much simpler. In this case, we define
 $$S_{\tilde K}^\log : \Sph^\star({\tN_{\tilde K~|\tildeDoK}}) \to 
\Sph^\star({N_K}_{|C})  \times_C \Sph(N_{D_K,C}) ,$$
  by $S_{\tilde K}^\log (\tilde x, \tilde \theta; \tilde \theta_k, k\in K) 
  = (\theta_k=\tilde \theta \tilde \theta_k, k\in K; \theta(\tilde x, 
  \tilde \theta))$.

\begin{prop}\label{prop:details-blow-up-top2}
$S_{\tilde K}^\log $ is a bijection making the following diagram commutative  \\
$$
\begin{tikzcd}
\tMlog_{|\tildeDoK }  \ar[r,"\simeq"] \ar[rd, "\sigma^\log" '] & \Sph^\star({\tN_{\tilde K~|\tildeDoK}}) \ar[rr, "S_{\tilde K}^\log"]\ar[rd, "\sigma^\log"]& & \Sph^\star({N_K}_{|C})  \times_C \Sph(N_{D_K,C})  \arrow[ld, "pr_1" ] \\
& \Mlog_{|C} \ar[r,"\simeq"]&\Sph^\star({N_K}_{|C})  & 
\end{tikzcd} .
$$ 
\end{prop}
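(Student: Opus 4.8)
The plan is to deduce Proposition \ref{prop:details-blow-up-top2} from its algebraic counterpart, Proposition \ref{prop:details-blow-up-mot2}, by passing to the fibrewise quotients by powers of $\Rp$, just as Theorem \ref{thm:details-blow-up-log} and Corollary \ref{cor:details-blow-up-top1} are obtained from Theorem \ref{thm:details-blow-up-mot} and Corollary \ref{cor:details-blow-up-mot1}. First I recall from Corollary \ref{cor:log-and-normal} the identifications $\Mlog_{|C}\simeq\Sph^\star({N_K}_{|C})$ and $\tMlog_{|\tildeDoK}\simeq\Sph^\star({\tN_{\tilde K~|\tildeDoK}})$, and recall (as already used in Proposition \ref{prop:details-blow-up-mot2}) that over the deepest stratum $\tildeDoK$ one has $K\setminus K=\varnothing$, so the vector $v=v(\tilde x,\tilde v)$ attached to $(\tilde x=[v],\tilde v)$ by Lemma \ref{lem:normalid} satisfies $\pi(v)=0$ and hence lies in $N^*_{D_K,C}$ (it is nonzero because $[v]=\tilde x$ is defined); this is what makes the target $\Sph^\star({N_K}_{|C})\times_C\Sph(N_{D_K,C})$ meaningful. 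If $C=D_K$ the stratum $\tildeDoK$ is empty and there is nothing to prove.

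Next I would observe that the algebraic bijection $S_{\tilde K}^\mlog$ of Proposition \ref{prop:details-blow-up-mot2} is equivariant for the natural fibrewise actions of $\Rp^{\tilde K}=\prod_{i\in\tilde K}\Rp$. On the source ${\tN^\star_{\tilde K~|\tildeDoK}}$ the factor indexed by $0$ rescales $\tilde v$ and the factor indexed by $k\in K$ rescales $\tilde v_k$; since $v_k=\tilde v\,\tilde v_k$ (the product of Lemma \ref{lem:missing_link}) and since the bijection $v\leftrightarrow([v],\tilde v)$ of Lemma \ref{lem:normalid} is $\C^*$-equivariant --- rescaling $v$ by $\lambda$ rescales $\tilde v$ by $\lambda$, as one sees by reparametrising the defining arc $\gamma(t)\mapsto\gamma(\lambda t)$ --- the action $\tilde v\mapsto\lambda_0\tilde v$, $\tilde v_k\mapsto\lambda_k\tilde v_k$ is carried by $S_{\tilde K}^\mlog$ to the action $v_k\mapsto\lambda_0\lambda_k v_k$, $v\mapsto\lambda_0 v$ on the target. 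The latter is the standard $\Rp^{\tilde K}$-action on ${N^\star_K}_{|C}\times_C N^*_{D_K,C}$ precomposed with the group automorphism $(\lambda_0,(\lambda_k)_{k\in K})\mapsto(\lambda_0,(\lambda_0\lambda_k)_{k\in K})$, which is bijective; hence the source and target $\Rp^{\tilde K}$-orbits correspond, and passing to the quotients turns $S_{\tilde K}^\mlog$ into a bijection $\Sph^\star({\tN_{\tilde K~|\tildeDoK}})\to\Sph^\star({N_K}_{|C})\times_C\Sph(N_{D_K,C})$. Comparing the defining formulas, this induced map is exactly $S_{\tilde K}^\log$, namely $(\tilde x,\tilde\theta;\tilde\theta_k,k\in K)\mapsto(\theta_k=\tilde\theta\tilde\theta_k,k\in K;\theta(\tilde x,\tilde\theta))$.

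Finally, commutativity of the triangle is immediate: for $Q=K$ there is no $K\setminus Q$ part in Proposition \ref{prop:logsigma_*}, so $\sigma_N^\log(\tilde x,\tilde\theta;\tilde\theta_k,k\in K)=(\tilde\theta\tilde\theta_k)_{k\in K}$, which is precisely $pr_1\circ S_{\tilde K}^\log$; equivalently, the triangle is the fibrewise $\Rp$-quotient of the commutative triangle of Proposition \ref{prop:details-blow-up-mot2} combined with Proposition \ref{prop:logsigma_*}. The one genuinely delicate point, which I would write out carefully, is the $\C^*$-equivariance of the canonical identification $v\leftrightarrow([v],\tilde v)$ of Lemma \ref{lem:normalid}: one must check via the arc description that rescaling the normal vector rescales its lift by the same factor, and deduce from this that the transported $\Rp^{\tilde K}$-action on the target sweeps out the full product of copies of $\Rp$, so that the quotient really is the stated fibre product of sphere bundles.
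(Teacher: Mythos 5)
Your proposal is correct and follows essentially the same route as the paper. The paper states at the start of \S\ref{ss:top} that the log versions of Theorem \ref{thm:details-blow-up-mot}, Corollary \ref{cor:details-blow-up-mot1}, and Proposition \ref{prop:details-blow-up-mot2} are obtained by ``dividing by the action of powers of $\Rp$'', and leaves the rest to the reader; you have simply made explicit the $\Rp^{\tilde K}$-equivariance of $S_{\tilde K}^\mlog$ (via the $\C^*$-equivariance of Lemma \ref{lem:normalid} and the reparametrisation change of basis on $\Rp^{\tilde K}$) and the degeneration of Proposition \ref{prop:logsigma_*} at $Q=K$, which is exactly what the paper's terse remark presupposes.
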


The maps $S_{\tilde Q}^\log$, for $Q\subset K$, glue together to a map defined on $\tMlog_{|E}$. We are going to describe its topological structure in order to study the topological structure of $\sigma^\log$.  
This is particularly simple over an open subset 
$U\subset C$ over which the splitting from \eqref{eqn:splitting} holds.  
In this case, the inverses of $S_{\tilde Q}^\log$ glue together over $U$ to the map 
$$
(S^\log)^{-1} : \Sph^\star(N_K)  \times_U \Sph\big(N_{D_K,C}\times_U  
  \displaystyle\fprod{U} 
{k\in K} N_{k,\ge 0}\big) \to \bigcup_Q \Sph^\star({\tN_{\tilde Q~|\tildeDoQ}}) 
\simeq \tMlog_{|E} , 
$$
where $(\theta_k, k\in K, 
\theta (v_N; r_k , k\in K)) $  is mapped to the vector 
$v= i(v_N) + \pi^*(\sum_k r_k\theta _k)$. By Lemma \ref{lem:normalid}, 
the vector $v$ defines $\tilde x=[v]$ and $\tilde \theta = \theta (\tilde v)$ ($v$ is defined only modulo $\Rp$ but so is $\tilde \theta$).  Let $Q= \{k\,:\,r_k=0\}$. 
Then $\tilde x \in \tildeDoQ $ and we set $(\tilde \theta_q = \tilde \theta^{-1} 
\theta_q; q\in Q)\in \Sph^\star({\tN_{\tilde Q~|\tildeDoQ}})$.  It is easy to see that 
it is the inverse of $S_{\tilde Q}^\log$.  

\begin{cor}\label{cor:glued-blow-up-top1}
Over a Zariski open subset $U\subset C$ for which the splitting from \eqref{eqn:splitting} exists, the map $S^\log $ is a homeomorphism making the following diagram commutative over $U$:
$$
\begin{tikzcd}
\tMlog_{|E} 
 \ar[rd, "\sigma^\log" ']  \ar[rr, "S^\log"]& &  
\Sph^\star(N_K)  \times_U \Sph\big(N_{D_K,C}\times_U  
  \displaystyle\fprod{U} 
{k\in K} N_{k,\ge 0}\big )
 \arrow[ld, "pr_1" ] \\
& \Mlog \simeq \Sph^\star(N_K) & 
\end{tikzcd} .
$$ 
\end{cor}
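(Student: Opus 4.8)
The strategy is to show that the map $(S^\log)^{-1}$ described immediately before the statement is a well-defined homeomorphism onto $\tMlog_{|E}$ making the triangle commute; then $S^\log$ is its (continuous) inverse, which gives the claim. First I would record the two compatible stratifications. On the source, $\tMlog_{|E}=\bigsqcup_{Q\subset K}\tMlog_{|\tildeDoQ}$, and by Corollary \ref{cor:log-and-normal} each stratum is $\Sph^\star(\tN_{\tilde Q})$ over $\tildeDoQ$. On the codomain, the factor $\Sph(N_{D_K,C}\times_U\fprod{U}{k\in K}N_{k,\ge0})$ is stratified by the set $Q=\{k\in K : r_k=0\}$ of vanishing moduli, and on the stratum labelled by $Q$ the codomain is exactly the one appearing in Corollary \ref{cor:details-blow-up-top1} (for $Q\subsetneq K$) or Proposition \ref{prop:details-blow-up-top2} (for $Q=K$). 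The first step is then bookkeeping: unwinding the definitions of $h_{\tilde Q}^\log$, $tr_{\tilde Q}^\log$ and of the splitting \eqref{eqn:splitting}, one checks that the assignment sending a codomain datum to $v=i(v_N)+\pi^*(\sum_{k}r_k\theta_k)\in N_{M,C}$ (well defined modulo $\Rp$), then to $(\tilde x=[v],\tilde\theta=\theta(\tilde v))$ via the $\Rp$-quotient of Lemma \ref{lem:normalid}, then to $\tilde\theta_q=\tilde\theta^{-1}\theta_q$ for $q\in Q$, restricts on each stratum precisely to the inverse of $S_{\tilde Q}^\log$. This already shows that $(S^\log)^{-1}$ is a stratum-preserving bijection and that $\pr_1\circ S^\log=\sigma^\log$, the latter by glueing the stratum-wise commutativities of Corollary \ref{cor:details-blow-up-top1} and Proposition \ref{prop:details-blow-up-top2} (with the reshuffling of strata absorbed into $\sigma_N^\log$).

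The substantive point is continuity of $(S^\log)^{-1}$ and of $S^\log$ across the boundaries between strata with different $Q$. For $(S^\log)^{-1}$ it suffices to observe that each constituent operation is continuous on the whole codomain: the combination $v=i(v_N)+\pi^*(\sum_k r_k\theta_k)$ is real-analytic in its arguments once the local splitting \eqref{eqn:splitting} is fixed over $U$ and descends to a continuous map modulo $\Rp$; the passage $\theta(v)\mapsto([v],\theta(\tilde v))$ is continuous as the $\Rp$-quotient of the bijection of Lemma \ref{lem:normalid}, which is an algebraic isomorphism $\ta^*\simeq\tilde\ta^*$ and hence a homeomorphism; and $(\theta_q)_q\mapsto(\tilde\theta^{-1}\theta_q)_q$ is continuous wherever $\tilde\theta$ is defined. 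The output point of $\tMlog_{|E}$ varies continuously even though the stratum index $Q$ jumps, because, by Remark \ref{rmk:top-on-log}, the charts $\Sph^\star(\tN_{\tilde Q})$ are patched inside $\tMlog_{|E}$ exactly according to the limiting behaviour of secant/tangent directions under the blowing-up, which is what the formula for $v$ encodes. Symmetrically, $S^\log$ is continuous: from $(\tilde x,\tilde\theta;\tilde\theta_q)$ one recovers $\theta_k=\theta_k(\tilde x,\tilde\theta)$ for $k\in K\setminus Q$ and $\theta_q=\tilde\theta\tilde\theta_q$ for $q\in Q$ via $\sigma_N^\log$ (Proposition \ref{prop:logsigma_*}), then $v$ modulo $\Rp$ via Lemma \ref{lem:normalid}, and finally $v_N$ and the $r_k$ through the continuous maps $i^*$, $\pi_k$ and reduction $\mod S^1$, all of which extend continuously across the strata. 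Since source and codomain are semialgebraic sets (so metrizable, locally compact, second countable), a continuous bijection with continuous inverse is a homeomorphism; alternatively one upgrades $S^\log$ to a homeomorphism directly from properness of $\sigma^\log$ over $U$.

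The main obstacle I anticipate is precisely this continuity across strata with different $Q$: one must be sure that the glueing formula genuinely realizes the way the charts $\Sph^\star(\tN_{\tilde Q})$ fit together inside $\tMlog_{|E}$, i.e.\ that $\theta(\tilde v)$ and the $\tilde\theta_q$ have the expected limits as some $r_k\to 0$. The clean way to handle this is to reduce the entire boundary analysis to Lemma \ref{lem:normalid} (the comparison of secant and tangent directions under the blowing-up) and to the description of the topology on the log-spaces in Remark \ref{rmk:top-on-log}, rather than attempting coordinate computations near the corners.
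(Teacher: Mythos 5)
Your proposal follows essentially the same route as the paper: the paper's argument is precisely the construction of the glued inverse $(S^\log)^{-1}$ via $v=i(v_N)+\pi^*(\sum_k r_k\theta_k)$, the observation that it restricts stratum-wise to the inverses of $S_{\tilde Q}^\log$ (so commutativity follows from Corollary \ref{cor:details-blow-up-top1} and Proposition \ref{prop:details-blow-up-top2}), and the resulting identification as a homeomorphism. Your additional discussion of continuity across the strata (via Lemma \ref{lem:normalid} and the topology of Remark \ref{rmk:top-on-log}) is consistent with, and indeed more explicit than, what the paper records.
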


By Corollary \ref{cor:glued-blow-up-top1}, each fibre of $\sigma^\log$ is of the form $\Sph\big(\C^m\times \displaystyle\prod_{k\in K}\R_{\ge0}\big)$, $m=\dim D_K -\dim C$, which is the topological join of $S^{2m-1}$ and the simplex $\Delta^{|K|-1}= (\R^K_{\ge0} \setminus \{0\}) / \Rp$. Since $K\neq\varnothing$, each fibre is contractible. \\

\begin{cor}\label{cor:homotopyequiv}
$\sigma: \tMtop\to \Mtop$ is a homotopy equivalence.
\end{cor}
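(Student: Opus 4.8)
Via Theorem~\ref{thm:logspaces} it suffices to show that $\sigma^{\log}\colon\tMlog\to\Mlog$ is a homotopy equivalence, and I would prove this by combining Corollary~\ref{cor:glued-blow-up-top1} with a gluing argument along the exceptional locus. First one checks that $\sigma^{\log}$ is proper and surjective: the canonical projections $\Mtop\to M$ and $\tMtop\to\tilde M$ are proper, being fibre products of real oriented blowings-up (and $\widehat{\C}\to\C$ is proper), and $\sigma$ is proper, so $\sigma^{\log}$ is a map between spaces proper over $M$, hence proper; it is surjective over $M\setminus C$ (where it is a homeomorphism) and over $C$ by Corollary~\ref{cor:glued-blow-up-top1}, the fibres displayed there being non-empty because $K\neq\varnothing$. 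Over $M\setminus C$ the morphism $\sigma$ is an isomorphism of varieties, so by functoriality $\sigma^{\log}$ restricts to a homeomorphism $\tMlog_{|\tilde M\setminus E}\xrightarrow{\sim}\Mlog_{|M\setminus C}$.

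Over the exceptional locus, Corollary~\ref{cor:glued-blow-up-top1} identifies $\sigma^{\log}\colon\tMlog_{|E}\to\Mlog_{|C}$, locally over a Zariski open $U\subset C$ carrying the splitting \eqref{eqn:splitting}, with the projection to $\Sph^\star(N_K)\simeq\Mlog_{|U}$ of the pullback of the bundle $\Sph\bigl(N_{D_K,C}\times_U\fprod{U}{k\in K}N_{k,\ge 0}\bigr)\to U$. Hence $\sigma^{\log}\colon\tMlog_{|E}\to\Mlog_{|C}$ is a locally trivial fibration with fibre $\Sph\bigl(\C^m\times\prod_{k\in K}\R_{\ge 0}\bigr)$, $m=\dim D_K-\dim C$, which is the join $S^{2m-1}\ast\Delta^{|K|-1}$; as $\Delta^{|K|-1}$ is non-empty and contractible ($K\neq\varnothing$), the join — hence the fibre — is contractible, and a locally trivial fibration with contractible fibre over the paracompact base $\Mlog_{|C}$ is a homotopy equivalence.

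It remains to glue the two regions. Fix a closed tubular neighbourhood $T$ of $C$ in $M$ compatible with the normal-crossings stratification, so that its radial retraction preserves every $D_i$ and every stratum; then $\tilde T\coloneq\sigma^{-1}(T)$ is a tubular neighbourhood of $E$, and the radial homotopies lift, by functoriality of the real oriented blowing-up for stratum-preserving smooth maps, to deformation retractions of $\Mlog_{|T}$ onto $\Mlog_{|C}$ and of $\tMlog_{|\tilde T}$ onto $\tMlog_{|E}$ that are intertwined by $\sigma^{\log}$ (one uses here that $\sigma$ carries the radial structure of $N_{M,C}$ to that of the tautological bundle along $E$). Writing $\Mlog=\Mlog_{|T}\cup\overline{\Mlog_{|M\setminus T}}$ and $\tMlog=\tMlog_{|\tilde T}\cup\overline{\tMlog_{|\tilde M\setminus\tilde T}}$, the map $\sigma^{\log}$ is a homeomorphism on the second piece and on the intersection $\Mlog_{|\partial T}$ (both lie over $M\setminus C$), a homotopy equivalence on the first piece (the previous paragraph together with the lifted retractions), and the pairs involved are NDR pairs since all the spaces are semialgebraic; the gluing lemma for homotopy equivalences then yields the claim, and Theorem~\ref{thm:logspaces} transports it back to $\sigma\colon\tMtop\to\Mtop$. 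The main obstacle is precisely this last step — arranging the tubular neighbourhoods of $C$ and $E$ so that the lifted radial retractions are compatible under $\sigma^{\log}$ and checking the cofibration conditions; alternatively one can sidestep it entirely by invoking that a proper surjection with contractible (hence cell-like) fibres between finite-dimensional ANRs is automatically a homotopy equivalence, which applies directly here since $\tMtop$ and $\Mtop$ are semialgebraic and the fibres of $\sigma^{\log}$ have just been shown to be contractible.
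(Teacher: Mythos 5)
The route you suggest only at the very end, as a ``sidestep''---a proper surjection with contractible (cell-like) fibres between finite-dimensional nice spaces is automatically a homotopy equivalence---is precisely what the paper does: by the paragraph preceding the corollary, every fibre of $\sigma^{\log}$ is contractible (a point over $M\setminus C$, and the join $S^{2m-1}\ast\Delta^{|K|-1}$ over $C$ by Corollary~\ref{cor:glued-blow-up-top1}), so the paper invokes Smale's Vietoris mapping theorem \cite{Sma57} to conclude that $\sigma^{\log}$ is a weak homotopy equivalence, and then Whitehead's theorem since both spaces are CW-complexes. Your main argument via gluing is genuinely different, and it is incomplete in the way you yourself flag: you would need tubular neighbourhoods $T$ of $C$ in $M$ and $\sigma^{-1}(T)$ of $E$ in $\tilde M$ whose induced radial deformation retractions on the log-spaces are intertwined by $\sigma^{\log}$, and you would need the inclusions $\Mlog_{|\partial T}\hookrightarrow\Mlog_{|T}$ etc.\ to be cofibrations. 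Neither is established, and the intertwining is not a formal consequence of ``functoriality of the real oriented blowing-up for stratum-preserving smooth maps''---it has to be engineered so that the retraction of $T$ onto $C$ pushes forward under $\sigma$ to a retraction of $\sigma^{-1}(T)$ onto $E$ compatible with the fibre identifications of Section~\ref{sec:bl}, which is nontrivial. Also, the locally-trivial-fibration claim over $C$ is only established locally in the Zariski topology on $C$ (where the splitting~\eqref{eqn:splitting} exists), so one would still have to argue that the total map over $C$ is a Hurewicz fibration. In short: the gluing route as written has real gaps, but the alternative you tack on at the end is correct and is exactly the paper's (much shorter) proof.
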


\begin{proof}
Since the fibres are contractible, $\sigma^\top$ is a weak homotopy equivalence by a theorem of Smale \cite{Sma57}. Hence, by the Whitehead theorem, it is a homotopy equivalence since the involved objects are CW-complexes.
\end{proof}

\subsection{Complete case: $\Mclog \simeq \Mcpl$}\label{sect:compl}

We extend the above constructions to $\tMclog\to \Mclog$. To do so, we break $\tMclog_{|E}$ into its part at infinity and its interior part, defined as
$$\tMclog_{E,\infty}=\left\{(x,\varphi,\psi)\in\tMclog\,:\,x\in E,\,\psi(\tilde g)=\infty\right\}\text{ and }$$ 
$$\tMclog_{E, \inte}=\left\{(x,\varphi,\psi)\in\tMclog\,:\,x\in E,\,\psi(\tilde g)\in\Rp\right\},$$
respectively; so that
$$\tMclog_{|E}=\tMclog_{E,\infty}\sqcup \tMclog_{E,\inte}.$$
The first set corresponds to the boundary of the infinitesimal tubular neighbourhood of $E$ on $\Tilde M$; that is, the horizontal straight and dashed red lines on the right-hand side diagrams of Examples \ref{ex:bu} and \ref{ex:bu2}. The second set -- that is, the complement of the first -- corresponds to the interior of this neighbourhood.

For $Q\subset K$, we set 
$$\Mclog_{Q,int}=\left\{(x,\varphi,\psi)\in\Mclog\,:\,\forall k\in K\setminus Q,\,\psi(g_k)\in\Rp\right\} .$$

\begin{thm}\label{thm:details-blow-up-cpl1}
For every $Q\subsetneq K$, there is a canonical bijection 
$S_{Q,\inte}^\clog$ which extends  $S_{\tilde Q}^\mlog$ and makes the following diagram commutative locally over $C$:
$$
\begin{tikzcd}
\tMclog_{E,\inte|\tildeDoQ}  \ar[rd,  "\sigma^\clog" ']  \ar[rr, "S_{Q,\inte}^\clog"]& & 
 {\Mclog_{Q,int}}_{|C}\times_C {N_{D_K,C}}
 \arrow[ld, "\pr_1" ] \\
& \Mclog_{|C} & 
\end{tikzcd}
$$
If $Q=K$ and $\dim C < \dim D_K$, then there is a canonical bijection $S_{\tilde K,\inte}^\clog$, extending  $S_{\tilde K}^\mlog$,  making the following diagram commutative.
$$
\begin{tikzcd}
\tMclog_{E,\inte|\tildeDoK}  \ar[rd, "\sigma^\clog" '] \arrow[rr, "S_{\tilde K,\inte}^\clog"]
& & \Mclog_{|C}\times_C N^*_{D_K,C}   \arrow[ld, "\pr_1" ] \\
& \Mclog_{|C} & 
\end{tikzcd} $$
\end{thm}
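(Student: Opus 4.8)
The plan is to follow the proofs of Theorem~\ref{thm:details-blow-up-mot} and Corollary~\ref{cor:details-blow-up-mot1} almost verbatim, carrying along the extra radial datum recorded by the morphism $\psi$. The first step is to upgrade Corollary~\ref{cor:log-and-normal} to the complete setting: exactly as in Lemma~\ref{lem:log-and-normal} (two generators of the ideal of $D_j$ differ by a unit $u$, on which $\psi$ and $\varphi$ are forced to be $|u(x)|$ and $u(x)/|u(x)|$, and $\partial_{v_j}(\cdot)$ rescales the same way), one obtains a canonical identification $\Mclog_{|\Do_J}\simeq\fprod{\Do_J}{j\in J}\Sph(N_j\oplus\oneplus)^*$ under which the $j$-th coordinate lies in the interior $N_j^*$ precisely when $\psi$ is finite on a generator of the ideal of $D_j$. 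Using this on $M$ and on $\tilde M$, the locus $\tMclog_{E,\inte|\tildeDoQ}$ is the set where the $E$-factor lies in $\tilde N^*$, and $\Mclog_{Q,\inte|C}$ is the set where the $k$-th factor lies in $N_k^*$ for every $k\in K\setminus Q$.

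Next I would compute $\sigma^\clog$ on the interior part. From $g_k\circ\sigma=\tilde g_k\tilde g$ we get $\psi(g_k)=\tilde\psi(\tilde g_k)\,\tilde\psi(\tilde g)$, and, together with the identity for the angular parts used in Propositions~\ref{prop:sigma_*} and~\ref{prop:logsigma_*}, this gives the transition formula: since $\tilde\psi(\tilde g)\in\Rp$ is finite and nonzero on the interior part, for $k\in K\setminus Q$ the germ $\tilde g_k$ is a unit at $\tilde x$ so $\psi(g_k)=|\tilde g_k(\tilde x)|\,\tilde\psi(\tilde g)\in\Rp$, while for $q\in Q$ one has $\psi(g_q)=\tilde\psi(\tilde g_q)\,\tilde\psi(\tilde g)$, finite or $\infty$ according to $\tilde\psi(\tilde g_q)$. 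Hence $\sigma^\clog$ carries $\tMclog_{E,\inte|\tildeDoQ}$ into $\Mclog_{Q,\inte|C}$, and, when $Q=K$ (with $\dim C<\dim D_K$, so $\tildeDoK\ne\varnothing$), into $\Mclog_{|C}$ with the normal-to-$D_K$ direction nonzero. Translating via Lemma~\ref{lem:normalid} (sending the $E$-direction $([v],\tilde v)$ to $v\in N^*_{M,C}$), the compactified form of Lemma~\ref{lem:missing_link} (the product $\tilde v\,\tilde v_q\in\Sph(\sigma^*N_q\oplus\oneplus)^*$ is, for a fixed finite nonzero $\tilde v$, the image of $\tilde v_q$ under the $\Rp$- and $S^1$-equivariant extension to circle-compactifications of the line-bundle isomorphism $\sigma^*N_q\simeq\tilde N\otimes\tilde N_q$ over $E\cap\tilde D_q$), and the splitting~\eqref{eqn:splitting} (extracting the component $i^*(v)\in N_{D_K,C}$), these formulae become exactly the $\mlog$-formulae $H^\mlog_{\tilde Q}$, $h^\mlog_{\tilde Q}$, $tr^\mlog_{\tilde Q}$ of Section~\ref{ss:motbu} with the finite factors replaced by their completions.

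Accordingly, for $Q\subsetneq K$ and over a Zariski open $U\subset C$ on which the splitting~\eqref{eqn:splitting} exists, I would define $S^\clog_{Q,\inte}$ by the same formula as $S^\mlog_{\tilde Q}=tr^\mlog_{\tilde Q}\circ H^\mlog_{\tilde Q}$, sending $(\tilde x,\tilde v;\tilde v_q,q\in Q)$ (with $\tilde v\in\tilde N^*$, $\tilde v_q\in\Sph(\tilde N_q\oplus\oneplus)^*$) to $(\pi_k(v),k\in K\setminus Q;\ v_q=\tilde v\,\tilde v_q,q\in Q;\ i^*(v))$ where $v=v(\tilde x,\tilde v)$; for $Q=K$ one drops the $\pi_k$-components and, since $\pi(v)=0$, the equality $v=i(i^*(v))$ forces $i^*(v)\in N^*_{D_K,C}$. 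That this is a bijection onto the stated target is proved just as for $S^\mlog_{\tilde Q}$ and for $S^\log$ in Corollary~\ref{cor:glued-blow-up-top1}, by exhibiting the inverse (reconstruct $v$, hence $\tilde x=[v]$ and $\tilde v$ by Lemma~\ref{lem:normalid}, from the $N_{M,C}$-data, then set $\tilde v_q=\tilde v^{-1}v_q$). Commutativity of both diagrams is the transition formula of the previous paragraph, and restricting $S^\clog_{Q,\inte}$ to the sublocus $\tMmlog_{|\tildeDoQ}$ (all $\tilde v_q$ finite) recovers $S^\mlog_{\tilde Q}$, so $S^\clog_{Q,\inte}$ extends it; canonicity is inherited from all the ingredients.

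The main obstacle is the bookkeeping of the product $\tilde v\,\tilde v_q$ in the presence of points at infinity: making precise the $\Rp$- and $S^1$-equivariant extension of the isomorphism of Lemma~\ref{lem:missing_link} to the fibrewise compactifications $\Sph(\,\cdot\oplus\oneplus)$, and checking it is compatible with $\sigma^\clog$ in the degenerate cases $\tilde v_q=\infty$ (so $\psi(g_q)=\infty$), which are precisely the pieces of the mixed part created by the blowing-up, cf. Examples~\ref{ex:bu} and~\ref{ex:bu2}. Everything else is a transcription of the algebraic case already settled in Section~\ref{ss:motbu}.
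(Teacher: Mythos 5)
Your proposal is correct and follows essentially the same route as the paper: the paper's own proof is just the remark that the argument is virtually identical to that of Corollary \ref{cor:details-blow-up-mot1}, i.e.\ one reuses $S_{\tilde Q}^\mlog$ (equivalently the pair $(\sigma^\clog, i^*(v))$ with $v=v(\tilde x,\tilde v)$ from Lemma \ref{lem:normalid} and the splitting \eqref{eqn:splitting}), carrying along the possibly-infinite radial data via the multiplicativity of $\psi$ on $g_k\circ\sigma=\tilde g_k\tilde g$. The points you single out as the remaining work (the $\Rp$- and $S^1$-equivariant extension of the isomorphism of Lemma \ref{lem:missing_link} to the fibrewise compactifications, and the check that the interior condition $\tilde\psi(\tilde g)\in\Rp$ lands the image in $\Mclog_{Q,\inte}$) are exactly the routine verifications the paper leaves implicit, and your treatment of them is sound.
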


\begin{proof}
The proof is virtually identical to the proof of Corollary \ref{cor:details-blow-up-mot1}.  
\end{proof}

Similarly, we study $\tMclog_{E,\infty}$ by extending the topological case.  Since $\tilde \psi (\tilde g)=\infty $, all $\psi (g_k) =\infty$ for $k\in K$ on the image 
 $\sigma^\clog (\tMclog_{E,\infty}) \subset \Mlog$.  Consider, as in Corollary \ref{cor:details-blow-up-top1},  
the sphere bundle 
$$
\Sph\Big(N_{D_K,C}\oplus\displaystyle\bigoplus_{k\in K\setminus Q}
N^*_{k,\ge 0}\Big) \xrightarrow{\sigma_Q} M .
$$
Note that $\Sph\Big(N_{D_K,C}\oplus\displaystyle\bigoplus_{k\in K\setminus Q}
N^*_{k,\ge 0}\Big)$ projects to $\PP \big(N_{D_K,C}\oplus\displaystyle\bigoplus_{k\in K\setminus Q}
N_{k}\big) \subset \PP \big(N_{D_K,C}\oplus\displaystyle\bigoplus_{k\in K}
N_{k}\big)$. For $q\in Q$, denote by $\sigma_Q^* \Big(\bigoplus_{q\in  Q} \tilde N_{q,\ge 0} \Big)$ the pull-back of 
$\bigoplus_{q\in  Q} \tilde N_{q,\ge 0} $ to  $\Sph\Big(N_{D_K,C}\oplus\displaystyle\bigoplus_{k\in K\setminus Q}
N^*_{k,\ge 0}\Big)
$.

\begin{thm}\label{thm:details-blow-up-cpl2}
For every $Q\subset K$,  there is a natural bijection $S_{Q,\infty} ^\clog$  making the following diagram commutative
\begin{equation*}
\begin{gathered}
\begin{tikzcd}
\tMclog_{E,\infty} \ar[d, " " '] \arrow[rr, "S_{Q, \infty}^\clog"] & & \Mlog_{|C}\times_C \sigma_Q^* \Big(\bigoplus_{q\in  Q} \tilde N_{q,\ge 0} \Big)  \arrow[d, " "] \\
\tMlog_{|\tildeDoQ} \ar[rd, "\sigma^\log"  '] \arrow[rr, "S_{\tilde Q}^\log"] & & \Mlog_{|C}\times_C\Sph\Big(N_{D_K,C}\oplus\displaystyle\bigoplus_{k\in K\setminus Q} N^*_{k,\ge 0}\Big) \arrow[ld, "\pr_1"] \\
& \Mlog_{|C }&
\end{tikzcd}
\end{gathered} ,
\end{equation*}
where the first vertical arrow is given by forgetting $\psi$.
\end{thm}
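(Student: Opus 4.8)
The plan is to describe $\tMclog_{E,\infty}$ stratum by stratum over $E$, so that the present theorem is to $\tMclog_{E,\infty}$ what Theorem~\ref{thm:details-blow-up-cpl1} is to $\tMclog_{E,\inte}$, with the topological Corollary~\ref{cor:details-blow-up-top1} playing the role that the algebraic Corollary~\ref{cor:details-blow-up-mot1} played there. The one preliminary remark, already recorded just before the statement, is that $\sigma^\clog(\tMclog_{E,\infty})\subset\Mlog_{|C}$: indeed for $(x,\varphi,\psi)\in\tMclog_{E,\infty}$ one has $\sigma(x)\in C$, and since $g_k\circ\sigma=\tilde g_k\cdot\tilde g$ locally while $\psi$ takes values in $\Ri$ (hence is never $0$), the semigroup identity $\psi(g_k\circ\sigma)=\psi(\tilde g_k)\cdot\psi(\tilde g)=\psi(\tilde g_k)\cdot\infty=\infty$ forces $\psi(g_k)=\infty$ for all $k\in K$ on the image. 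In particular the lower triangle of the diagram is precisely Corollary~\ref{cor:details-blow-up-top1} applied along the left (forgetful) vertical arrow, and, exactly as there, the whole statement is understood over a Zariski-open $U\subset C$ on which the splitting~\eqref{eqn:splitting} exists.

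First I would analyse the forgetful map $\tMclog_{E,\infty|\tildeDoQ}\to\tMlog_{|\tildeDoQ}$, $(x,\varphi,\psi)\mapsto(x,\varphi)$. Fixing $x\in\tildeDoQ$, the log exact sequence~\eqref{eq:logexactsequence} applied to $(\tilde M,\tilde D)$ shows that $\tilde\M_x$ is generated over $\OO_x^*$ by $\tilde g$ and the $\tilde g_q$, $q\in Q$, so a morphism $\psi\in\Hom_\mon(\tilde\M_x,\Ri)$ restricting to $|\cdot(x)|$ on units is freely determined by the tuple $(\psi(\tilde g),(\psi(\tilde g_q))_{q\in Q})\in\Ri^{1+|Q|}$, and membership in $\tMclog_{E,\infty}$ is exactly the constraint $\psi(\tilde g)=\infty$. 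Hence the forgetful map exhibits $\tMclog_{E,\infty|\tildeDoQ}$ as a bundle over $\tMlog_{|\tildeDoQ}$ with fibre $\Ri^{Q}$, whose $q$-th coordinate is $\psi(\tilde g_q)$. Combining this coordinate, via Lemma~\ref{lem:log-and-normal}, with the $S^1$-datum $\varphi(\tilde g_q)$ already carried by $\tMlog_{|\tildeDoQ}$, the datum in the $q$-direction is a point of $\Sph(\tilde N_q\oplus\one_{\ge0})$ with the zero section removed; moreover, after applying $S_{\tilde Q}^\log$ the angular part $\varphi(\tilde g_q)$ recombines with $\varphi(\tilde g)$ into $\varphi(g_q\circ\sigma)$ and is thereby absorbed into the $\Mlog_{|C}$-factor (the same computation as in Proposition~\ref{prop:sigma_*}), while what survives as new data is an element of $\tilde N_{q,\ge0}$, a fixed homeomorphism $\Ri\cong\Ro$ (e.g. $t\mapsto t^{-1}$) making this identification explicit.

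Finally I would define $S_{Q,\infty}^\clog$ and check commutativity and bijectivity. Set $S_{Q,\infty}^\clog(x,\varphi,\psi)=\big(\sigma^\log(x,\varphi)\,;\,(\psi(\tilde g_q))_{q\in Q}\big)$ in $\Mlog_{|C}\times_C\sigma_Q^*\big(\bigoplus_{q\in Q}\tilde N_{q,\ge0}\big)$, where the $\Mlog_{|C}$-component is $\pr_1\big(S_{\tilde Q}^\log(x,\varphi)\big)$ and the base point of the last bundle is $\pr_2\big(S_{\tilde Q}^\log(x,\varphi)\big)\in\Sph\big(N_{D_K,C}\oplus\bigoplus_{k\in K\setminus Q}N^*_{k,\ge0}\big)$. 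Commutativity of the upper square is then formal: forgetting the $\bigoplus_q\tilde N_{q,\ge0}$-coordinate while retaining the base point turns $S_{Q,\infty}^\clog$ into $(\pr_1\circ S_{\tilde Q}^\log,\pr_2\circ S_{\tilde Q}^\log)=S_{\tilde Q}^\log$ precomposed with the forgetful arrow, which is exactly the other composite; commutativity of the lower triangle is Corollary~\ref{cor:details-blow-up-top1}. Bijectivity follows from the bundle description $\tMclog_{E,\infty|\tildeDoQ}\simeq\tMlog_{|\tildeDoQ}\times_{\tildeDoQ}\Ri^{Q}$ of the previous paragraph together with the bijectivity of $S_{\tilde Q}^\log$. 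The genuinely delicate point — which I expect to absorb most of the write-up — is this last identification of bundles: one must match the $\Ri^{Q}$-bundle on $\tildeDoQ$ with $\sigma_Q^*\big(\bigoplus_{q}\tilde N_{q,\ge0}\big)$ over the sphere-bundle base, using the identifications $\iso_s$, $\can_E$ and the splitting~\eqref{eqn:splitting}, and keep careful track of the mixed configurations where $\psi(\tilde g_q)=\infty$ for some but not all $q\in Q$; beyond this bookkeeping there is no new idea relative to Theorems~\ref{thm:details-blow-up-log} and~\ref{thm:details-blow-up-cpl1}.
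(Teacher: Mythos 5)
Your argument is correct and essentially matches the paper's: both identify the forgetful map from $\tMclog_{E,\infty}$ over $\tildeDoQ$ down to $\tMlog_{|\tildeDoQ}$ as a bundle with fibre $\Ri^{Q}$ recording $(\psi(\tilde g_q))_{q\in Q}$, and build $S_{Q,\infty}^\clog$ by packaging this extra fibrewise datum, over the sphere-bundle base point supplied by $S_{\tilde Q}^\log$, into $\sigma_Q^*\big(\bigoplus_{q\in Q}\tilde N_{q,\ge 0}\big)$. Where you are in fact slightly more careful than the published proof --- whose second-component formula $\hat r_q=(\tilde s_q')^{-1}\psi(\tilde g_q)\tilde\ell_{q,\ge 0}$ is written as if $\psi(\tilde g_q)$ lay in $\Ro$ rather than $\Ri$ --- is in noting explicitly that a fixed homeomorphism $\Ri\cong\Ro$ (such as $t\mapsto t^{-1}$) is needed to land in $\tilde N_{q,\ge 0}$.
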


\begin{proof}
It suffices to find a formula for the second component of  $S_{Q, \infty}^\clog$.  It is given by $\hat r_q=(\tilde s_q')^{-1}\psi(\tilde g_q)\tilde\ell_{q,\ge 0}\in\tilde N_{q,\ge 0}$ for all $q\in Q$. To show that it injective, we note that $\psi (\tilde g_k)$ for $k\in K\setminus Q$ is uniquely defined since $\psi(\tilde g_k)\tilde\ell_{k,\ge 0}=\tilde s_{k,\ge 0}(\tilde x)$. It is easy to check that $S_{Q,\infty}^\clog $ is surjective.

\end{proof}

\section{The motivic Milnor fibre}\label{sec:localcase}
In this section, we consider $f:(X,x_0)\to(\mathbb C,0)$, i.e. the local case without the assumption that $f$ is normal crossing. Then, there exists a resolution of $f$ that is a finite sequence of blowings-up with smooth algebraic centres $\mu:M\to X$ such that the exceptional divisor has normal crossings only, and such that $\mu^{-1}(x_0)=\cup_{i\in I} D_i$ is the union of some components of the exceptional divisor. Then, $\tilde f(x)\coloneq f\circ\mu(x)=u(x)\prod_{i\in I}s_i(x)^{N_i}$ satisfies the assumptions from the set-up (see \eqref{eqn:globalformula}), where $s_i$ is a regular section of a line bundle $p_i:L_i\to M$ such that $D_i$ is the reduced variety defined by $s_i$.

By the weak factorisation theorem \cite{Wlo03}, two such resolutions can be related by a sequence of blowings-up and blowings-down. Therefore, the spaces $\Mmlog$, $\Mlog$, and $\Mclog$ induced by $\tilde f$ are well-defined up to equivalences induced by such a single blowing-up, as expressed in the theorems of Section \ref{sec:bl}.

We set 
$$\Mmotloc\coloneq\Mmot\cap\pr^{-1}\left(\mu^{-1}(x_0)\right),$$
where, recall, $\pr: \Mmot \to M$ denotes the projection. Then 
$$\Mmotloc=\bigsqcup_{\varnothing\neq J\subset I}L^\star_{J,x_0}\ ,$$ where $$L^\star_{J,x_0} \coloneq \fprod{\Do_J}{i\in J}{L^*_i}_{|{\Do_J\cap (\mu\circ\pr)^{-1}(x_0)}}.$$
Again, we have a log-geometric characterisation $\Mmlogloc\simeq\Mmotloc$, where
$$
\Mmlogloc=\left\{(x,\Phi)\,:\,\mu(x) = x_0,\,\Phi\in\Hom_{\mon}(\M_x,\C^*),\,
\forall g\in\OO_x^*,\,\Phi(g)=g(x)\right\}.$$

We define $\tilde f_J:\C^*\acts L_{J,x_0}^\star\to \C^*$ analogously to \eqref{def:f_J} or, equivalently, by applying the functoriality of $\Mmlogloc$ to $\tilde f:(M,D)\to(\mathbb C,0)$. Then we define the local motivic Milnor fibre of $f$ at $x_0$ by
$$
\displaystyle\mathcal S_{f,x_0}\coloneq-\sum_{\varnothing\neq J\subset I}(-1)^{|J|}\left[\tilde f_J:\C^*\acts L_{J,x_0}^\star\to \C^*\right]\in\K(\Var_{\C^*}^{\C^*}).
$$

It follows from \cite{DL98} that, as an element of $\M_{\C^*}^{\C^*}\coloneq\K(\Var_{\C^*}^{\C^*})\left[\LL^{-1}\right]$, $\mathcal S_{f,x_0}$ does not depend on the choice of the resolution. Indeed, the image of $\mathcal S_{f,x_0}$ in $\M_{\C^*}^{\C^*}$ is given by the formal limit $$\displaystyle\mathcal S_{f,x_0}=-\lim_{T\to\infty}Z_{f,x_0}(T),$$ where $$Z_{f,x_0}(T)\coloneq\sum_{n\geq 1}\left[\ac_f:\X_{n,x_0}(f)\to\C^*\right]\LL^{-nd}T^n\in\M_{\C^*}^{\C^*}\llbracket T\rrbracket$$ is the local motivic zeta function of $f$ at $x_0$. This formal limit is defined using a rational expression of the zeta function. The coefficients of $Z_{f,x_0}(T)$ are defined by
$$\X_{n,x_0}(f)\coloneq\left\{\varphi\in\mathcal L_n(X),\,\varphi(0)=x_0,\,\ord_t f(\varphi)=n\right\},$$ where $\ac_f(\varphi)=\ord_t(f\circ\varphi)$ is the \emph{angular component} mapping and $\L_n(X)$ denotes the space of $n$-jets on $X$, namely the reduced and separated scheme of finite type given by
$$\L_n(X)\coloneq X\!\left(\C\llbracket t\rrbracket/t^{n+1}\right)=\Hom_{\C\mathrm{-sch}}\left(\Spec\C\llbracket t\rrbracket/t^{n+1},X\right).$$

The following theorem states that $\mathcal S_{f,x_0}$ is well defined as an element of $\K(\Var_{\C^*}^{\C^*})$ and not merely of $\M_{\C^*}^{\C^*}$.

\begin{thm}\label{thm:invSf}
The motive $\mathcal S_{f,x_0}\in\K(\Var_{\C^*}^{\C^*})$ does not depend on the choice of the resolution $\mu$.
\end{thm}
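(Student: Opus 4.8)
The plan is to deduce the statement from the weak factorisation theorem together with the blowing-up formulae of Section \ref{sec:bl}. First I would reduce to a single blowing-up: given two resolutions $\rho:M\to X$ and $\rho':M'\to X$ of $f$ as in the statement — both isomorphisms over $X\setminus\{x_0\}$ — the weak factorisation theorem \cite{Wlo03}, in its form compatible with a normal crossing divisor, joins $M$ and $M'$ by a chain of blowings-up and blowings-down with smooth centres, which may be chosen so that every intermediate variety is again a resolution of $f$, every map is an isomorphism over $X\setminus\{x_0\}$, and every centre is in normal crossings with the relevant exceptional divisor. Each such centre $C$ then lies over $x_0$, whence $C\subset\rho^{-1}(x_0)\subset\tilde f^{-1}(0)=D=\bigcup_{i\in I}D_i$ and $C$ is in normal crossings with $D$. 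For $\sigma:\tilde M\to M$ the blowing-up of such a $C$, one has in the notation of \S\ref{ss:set-up} that $\tilde f\circ\sigma=(u\circ\sigma)\,\tilde s^{\,N_0}\prod_{i\in I}\tilde s_i^{N_i}$ with $N_0=\sum_{k\in K}N_k>0$, so $\tilde M$ with index set $\{0\}\cup I$ — the $0$-th divisor being $E=\tilde D_0$ — again fits the set-up; I write $\widehat f:=\tilde f\circ\sigma$. It is then enough to show that $\mathcal S_{f,x_0}$ is unchanged when one passes from $M$ to $\tilde M$.

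Next I would localise the comparison near $C$. By Theorem \ref{thm:logspaces} and functoriality, $\mathcal S_{f,x_0}$ is the class of the stratified evaluation map $\ev_{\tilde f}$ on $\Mmlogloc=\bigsqcup_{\varnothing\neq J\subset I}L^\star_{J,x_0}$, weighted by $(-1)^{|J|}$ over $\Do_J$ with the global sign, and on the stratum $\Do_J$ the map $\ev_{\tilde f}$ is $\tilde f_J$ of \eqref{def:f_J}. Splitting the non-empty subsets $\tilde J\subset\{0\}\cup I$ according to whether $0\in\tilde J$: over $\tilde M\setminus E\cong M\setminus C$ the strata, the functions $\widehat f_{\tilde J}$ and the $\C^*$-actions agree with those on $M\setminus C$, so by cut-and-paste in $\K(\Var_{\C^*}^{\C^*})$ (Definition \ref{defi:K0}) the difference $\mathcal S_{f,x_0}(\tilde M)-\mathcal S_{f,x_0}(M)$ reduces to the alternating sum of the contributions of the new strata $\tildeDoQ\subset E$ minus the contributions of the strata of $M$ meeting $C$. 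Using the Remark in \S\ref{ss:set-up} to stratify $C$ by $C\cap\Do_R$, $R\subset L$, I may assume $L=\varnothing$, i.e. $I=K$ and $C\subset D_K$; the only stratum of $M$ then meeting $C$ is $\Do_K=D_K$, and everything boils down to the identity
\begin{equation*}
\sum_{Q\subset K}(-1)^{|Q|}\bigl[\widehat f_{\tilde Q}\bigr]=-(-1)^{|K|}\bigl[\tilde f_K|_C\bigr]\quad\text{in }\K(\Var_{\C^*}^{\C^*}),
\end{equation*}
where $[\,\cdot\,]$ denotes the class of the corresponding $\C^*$-equivariant map to $\C^*$ over $C$, and the $Q=K$ term is present only when $\dim C<\dim D_K$.

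Then I would feed in Section \ref{sec:bl}. By Theorem \ref{thm:details-blow-up-mot} and Corollary \ref{cor:details-blow-up-mot1}, for $Q\subsetneq K$ the map $\sigma^\mlog$ identifies $\tMmlog_{|\tildeDoQ}$, Zariski-locally over $C$, with $N^\star_K\times_C N_{D_K,C}$ in such a way that the projection onto $N^\star_K$ becomes $\sigma_N^\mlog$ of Proposition \ref{prop:sigma_*}, while by Proposition \ref{prop:details-blow-up-mot2} for $Q=K$ one gets $N^\star_K\times_C N^*_{D_K,C}$. Since $\widehat f^\mlog=\tilde f^\mlog\circ\sigma^\mlog$ by functoriality, under these identifications $\widehat f_{\tilde Q}$ is $\tilde f_K$ composed with the projection to $N^\star_K$, hence is constant along the fibres, which are those of $N_{D_K,C}$ (resp.\ $N^*_{D_K,C}$) of complex rank $m:=\dim D_K-\dim C$. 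Transporting the $\C^*$-action through these bijections — Proposition \ref{prop:actions} letting the weights on $N^\star_K$ be chosen freely — the fibre direction carries a scalar action; stratifying $C$ so that $N_{D_K,C}$ trivialises, using relation \ref{item:liftact} to turn that scalar action into the trivial one, and using the $\K(\Var_\C)$-module structure, I expect
\begin{equation*}
\bigl[\widehat f_{\tilde Q}\bigr]=\LL^{m}\bigl[\tilde f_K|_C\bigr]\ \ (Q\subsetneq K),\qquad\bigl[\widehat f_{\tilde K}\bigr]=(\LL^{m}-1)\bigl[\tilde f_K|_C\bigr].
\end{equation*}
Substituting these and using $\sum_{Q\subseteq K}(-1)^{|Q|}=0$ (since $K\neq\varnothing$), all powers of $\LL$ cancel and the identity above follows. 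This cancellation is exactly why the equality holds already in $\K(\Var_{\C^*}^{\C^*})$ and not only in its localisation $\M_{\C^*}^{\C^*}$, where it follows from \cite{DL98} via motivic integration — thereby recovering the result of \cite{NP}.

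The hard part will be making this third step precise: one must check, $\C^*$-equivariantly and over each stratum of $E$, that $\widehat f$ on the new strata really is $\tilde f_K$ pulled back by $\sigma_N^\mlog$ — which is where the exact normal-bundle formulae of Section \ref{sec:bl} and their compatibility with $f$ through $\tilde s_k=\tilde s^{-1}\sigma^*s_k$ must be handled with care — and one must verify that every operation used (multiplication by $\LL$, the scissor relations, and above all the change of the lifted $\C^*$-action of relation \ref{item:liftact}) is legitimate in the \emph{non-localised} Grothendieck ring. Once this bookkeeping is in place, what remains is just the elementary binomial identity above.
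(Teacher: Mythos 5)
Your proposal is correct and follows essentially the same route as the paper: reduce via weak factorisation to a single admissible blowing-up, use the normal-bundle descriptions of $\tMmlog$ over the strata of $E$ from Section \ref{sec:bl} (Corollary \ref{cor:details-blow-up-mot1}, Proposition \ref{prop:details-blow-up-mot2}) together with relation \ref{item:liftact} to handle the transported $\C^*$-actions, and conclude by the binomial cancellation $\sum_{Q\subset K}(-1)^{|Q|}=0$. The only difference is cosmetic bookkeeping: the paper keeps the fibre factor $N_{D_K,C}$ in the class $\left[f_K:\C^*\acts L_{K,x_0}^\star\times N_{D_K,C}\to\C^*\right]$ and uses its independence of $Q$, whereas you trivialise it Zariski-locally to extract $\LL^{m}$, both arguments being valid in the non-localised ring $\K(\Var_{\C^*}^{\C^*})$.
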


\begin{proof}
By the weak factorisation theorem \cite{Wlo03}, it is enough to study the composition of $\mu$ with a single blowing-up $\sigma$ along a nonsingular subvariety of $D$ that is supposed to have normal crossings with $D$.

We use the notation of Section \ref{sec:bl}. For $Q\subset K$, denote by $(\tilde f\circ\sigma)_{\tilde Q}:\C^*\acts {\tilde L}^\star_{\tilde Q,x_0}\to \C^*$ the functions induced by $\tilde f\circ\sigma$.

Note that the bijections $S_{\tilde Q}^\mlog$ in Corollary \ref{cor:details-blow-up-mot1} and Proposition \ref{prop:details-blow-up-mot2} are equivariant isomorphisms with respect to the diagonal action on $\tMmot_{|\tildeDoQ}$ and the action on $\Mmot_{|C}$ given by $\lambda \cdot v_k=\lambda v_k$ if $k\in K\setminus Q$, and by $\lambda \cdot v_q=\lambda^2 v_q$ if $q\in Q$.

Therefore, for $Q\subsetneq K$, we get
$$\left[(\tilde f\circ\sigma)_{\tilde Q}:\C^*\acts {\tilde L}^\star_{\tilde Q,x_0}\to\C^*\right]=\left[f_K\circ\pr_1:\C^*\acts L_{K,x_0}^\star\times N_{D_K,C}\to\C^*\right],$$
and, if $Q=K$, we get
\begin{align*}\left[(\tilde f\circ\sigma)_{\tilde K}:\C^*\acts {\tilde L}^\star_{\tilde K,x_0}\to\C^*\right]&=\left[f_K\circ\pr_1:\C^*\acts L_{K,x_0}^\star\times N_{D_K,C}^*\to\C^*\right]\\&=\left[f_K\circ\pr_1:\C^*\acts L_{K,x_0}^\star\times N_{D_K,C}\to\C^*\right]-\left[f_K:\C^*\acts L_{K,x_0}^\star\to\C^*\right],\end{align*} where the last equality comes from additivity.

Note that the above equalities do not depend on $Q\subset K$; see Proposition \ref{prop:actions}.

Finally,
\begin{align*}
 &\sum_{Q\subset K}(-1)^{|Q|+1}\left[(\tilde f\circ\sigma)_{\tilde Q}:\C^*\acts {\tilde L}^\star_{\tilde Q,x_0}\to\C^*\right] \\
=&\sum_{Q\subset K}(-1)^{|Q|+1}\left[f_K\circ\pr_1:\C^*\acts L_{K,x_0}^\star\times N_{D_K,C}\to\C^*\right]+(-1)^{|K|}\left[f_K:\C^*\acts L_{K,x_0}^\star\to\C^*\right]\\
=&(-1)^{|K|}\left[f_K:\C^*\acts L_{K,x_0}^\star\to\C^*\right]
\end{align*}
since $\displaystyle{\sum_{Q\subset K}(-1)^{|Q|}=\sum_{n=0}^{|K|} \binom {|K|} {n}(-1)^n=0}$.
\end{proof}

\begin{rmk}\label{rmk:speculation2}
The above proof shows that the coefficients $(-1)^{|J|}$ appearing in the formula for $\Sf$ are necessary for the motivic Milnor fibre to be independent of the choice of the resolution (see also Remark \ref{rmk:speculations}).
\end{rmk}

\begin{rmk}
Similar remarks can be made in the topological and complete cases. Set $$\Mtoploc\coloneq\Mtop\cap\pr^{-1}\left(\mu^{-1}(x_0)\right).$$
We have a log-geometric characterisation $\Mlogloc\simeq\Mtoploc$ where
$$\Mlogloc=\left\{(x,\varphi)\,:\; \mu(x)=x_0,\,\varphi\in\Hom_{\mon}(\M_x, S^1),\,\forall g\in\OO_x^*,\,\varphi(g)=\frac{g(x)}{|g(x)|}\right\}.$$
By functoriality, $\tilde f$ induces a continuous map $\sign\tilde f:\Mlogloc\to S^1$.

By the weak factorisation theorem, $\Mlogloc$ is independent of the resolution up to the equivalence relation given by Theorem \ref{thm:details-blow-up-log}. In particular, we deduce from \ref{cor:homotopyequiv}, that the homotopy type of $\sign\tilde f:\Mlogloc\to S^1$ does not depend on the choice of $\mu$. \\

For the complete Milnor fibration, we may proceed in the same way.
Set $$\Mcplloc\coloneq\Mcpl\cap\pr^{-1}\left(\mu^{-1}(x_0)\right)$$
We have a log-geometric characterisation $\Mclogloc \simeq \Mcplloc$ where
$$\Mclogloc=\left\{(x,\varphi,\psi)\,:\,(x,\varphi)\in\Mtoploc,\,\psi\in\Hom_{\mon}\left(\M_x,(\Ro,\cdot)\right),\,\forall g\in\OO_x^*,\,\psi(g)=|g(x)|\right\}.$$
By functoriality, $\tilde f$ induces a continuous map $\sign\tilde f:\Mclogloc\to S^1$ which extends $\sign\tilde f:\Mlogloc\to S^1$. This map is well-defined up to the equivalence relations given in Theorems \ref{thm:details-blow-up-cpl1} and \ref{thm:details-blow-up-cpl2}.
\end{rmk}

\end{document}